\newtheorem{theorem}{Theorem}[section]
\newtheorem{proposition}[theorem]{Proposition}
\newtheorem{corollary}[theorem]{Corollary}
\newtheorem{lemma}[theorem]{Lemma}
\newtheorem{remark}[theorem]{Remark}
\newcommand{\scal}[2]{\left\langle #1,#2 \right\rangle}
\newcommand{\g}{\nabla}
\newcommand{\di}{\mathrm{div}}
\newcommand{\lap}{\Delta}
\newcommand{\dr}{\partial}
\newcommand{\tr}{\mathrm{tr}}
\newcommand{\vol}{\mathrm{vol}}
\newcommand{\R}{\mathbb{R}}
\newcommand{\N}{\mathbb{N}}
\newcommand{\s}{\mathbb{S}}
\newcommand{\B}{\mathbb{B}}
\newcommand{\inter}[2]{[\![#1,#2]\!]}
\newcommand{\abs}[1]{ \left| #1 \right| }
\newcommand{\eps}{\varepsilon}
\newcommand{\ve}{\varepsilon}
\newcommand{\vp}{\varphi}
\newcommand{\id}{\mathrm{id}}
\newcommand{\loc}{\mathrm{loc}}
\newcommand{\fl}{\mathrm{flat}}
\newcommand{\cst}{\mathrm{cst}}
\newcommand{\aleq}{\lesssim}
\DeclareMathOperator*{\esssup}{ess\,sup}
\DeclareMathOperator*{\osc}{osc}
\title[Regularity of unconstrained $p$-harmonic maps from curved domain and application]{Regularity of unconstrained $p$-harmonic maps from curved domain and application to critical $p$-Laplace systems}
\author{Dorian Martino}
\address[Dorian Martino]{ETH Zürich, Rämistrasse 101, 8092 Zürich}
\email{dorian.martino@math.ethz.ch}
\date{\today}
\begin{document}

	\begin{abstract}
		Given $p\geq 2$ and a map $g : B^n(0,1)\to S_n^{++}$, where $S_n^{++}$ is the group of positively definite matrices, we study critical points of the following functional:
		\begin{align*}
			v\in W^{1,p}\left(B^n(0,1);\R^N \right) \mapsto \int_{B^n(0,1)} |\g v|^p_g\, d\vol_g = \int_{B^n(0,1)} \left( g^{\alpha\beta}(x) \scal{\dr_\alpha v(x)}{\dr_\beta  v(x)} \right)^{\frac{p}{2}}\, \sqrt{\det g(x)}\, dx.
		\end{align*}
		We show that if $g$ is uniformly close to a constant matrix, then $v$ is locally Hölder-continuous. If $g$ is Hölder-continuous, we show that $\g v$ is locally Hölder-continuous. As an application, we prove that any Hölder-continuous solution to $|\lap_{g,p}u|\aleq |\g u|^p_g$ satisfies additional regularity properties depending on the regularity of $g$. In the case $p=n$, only the continuity is assumed \textit{a priori}.
	\end{abstract}
	
	\maketitle
	
	
	\subsection*{Keywords}
	$p$-Laplacian, degenerate elliptic system, regularity.
	
	\subsection*{Mathematics Subject Classification (2010)}
	35B65, 35J92, 58J05.
	
	\section{Introduction}
	
	In order to study the regularity of free boundary harmonic maps between manifolds and their generalizations, there are two standard methods. The first one, which is useful to study minimizing or stationary maps, is by analysing blow-ups or monotonicity formulas up to the boundary, see for instance \cite{jost2019L,guo2020,moser2022} and references therein. The second one is to perform a reflection at a given point of the boundary in order to state the question as an interior regularity problem, see for instance \cite{jost2019,laurain2017,laurain2019,liu2022,mazowiecka2018,scheven2006} and references therein. To do so, the first step is to show that near the boundary, the solution has controlled oscillations. Once this step is understood and the reflection is performed, one usually obtain a system with leading-order operator having coefficients depending on the regularity of the solution itself and therefore, having very low regularity. For instance in the case of free boundary harmonic maps, one obtain a system of the form $|\lap_g u|\aleq |\g u|^2_g$, where $g$ is a metric of the form $h(\cdot,u(\cdot))$, where $h$ is regular. After showing that the oscillations of the free boundary harmonic map are small near the boundary, we deduce that $g$ is uniformly close to a constant, but \textit{a priori} not even continuous. \\
	
	In the case of $p$-harmonic maps between manifolds, only the boundary regularity of minimizers and stationary maps has been studied \cite{duzaar1998,fuchs1990,hardt1987,muller2002}. One could hope to generalize the reflection argument of Scheven \cite{scheven2006} in order to write the Neumann boundary condition as an interior problem. One would obtain a system closely related to the one of $p$-harmonic maps with degenerate metric on the domain and arbitrary target. The regularity of $p$-harmonic maps is still an open problem in full generality but it is understood for round targets, see for instance the review \cite{schikorra2017} and the more recent works \cite{miskiewicz2023,martinoschiko2024,martinoschikorra2024}. The main issue is that the regularity of unconstrained $p$-harmonic maps with a domain having a metric of low regularity is not well understood. The regularity of $p$-harmonic maps with free boundary on the round sphere have been studied by Mazowiecka\textendash Rodiac\textendash Schikorra \cite{mazowiecka2020}, based on a strategy introduced by Schikorra \cite{schikorra2018}. In order to avoid a reflection, he wrote the Neumann boundary condition as a equation along the boundary and obtain a system which is \textit{not} of the form $|\lap_p u| \aleq |\g u|^p$. He successfully obtained a new proof of the regularity up to the boundary for systems closely related to harmonic maps systems and $H$-systems in dimension 2. \\
	
	The main motivation of the present work is to generalize the reflection argument to the $p$-Laplacian setting. To do so, we study the regularity of unconstrained $p$-harmonic maps whose domain is endowed with an $L^\infty$-metric having low regularity. More precisely, let $\B$ be the $n$-dimensional unit ball and $g$ be a metric on $\B$, \textit{i.e.} a map $ g: x\in \B \mapsto (g_{\alpha\beta}(x))_{1\leq \alpha,\beta\leq n}\in S_n^{++}$ where $S_{n}^{++}$ is the set of positive definite matrices, such that $g,g^{-1}\in L^\infty(\B)$. We study the solutions $v\in W^{1,p}(\B;\R^N)$ to the following system:
	\begin{align}\label{eq:pLap_system}
		\lap_{g,p}v := \frac{1}{\sqrt{\det g} } \dr_i\left( \left( g^{\alpha\beta} \scal{\dr_\alpha v}{\dr_\beta v} \right)^\frac{p-2}{2} g^{ij} \, \sqrt{\det g}\, \dr_i v \right) = 0.
	\end{align}
	Here, we used the Einstein summation convention. We also denoted $(g^{\alpha\beta})_{1\leq \alpha,\beta\leq n} = g^{-1}$ and $\scal{\cdot}{\cdot}$ the Euclidean scalar product in $\R^N$. Then, we study the improvement of regularity in the two cases where $g$ is Hölder-continuous and $C^1$.\\
	
	Solutions to \eqref{eq:pLap_system} can be obtained as local minimizers of the following functional: 
	\begin{align*}
		E_g : v\in W^{1,p}(\B;\R^N) \mapsto \int_{\B} |\g v|^p_g\, d\vol_g = \int_{\B} \left( g^{\alpha\beta}(x) \scal{\dr_\alpha v(x)}{\dr_\beta v(x)} \right)^{\frac{p}{2}}\, \sqrt{\det g(x)}\, dx.
	\end{align*}
	Minimizers of non-autonomous functionals of the form $\int F(x,Dv)dx$, with $F$ satisfying the standard growth condition $|\xi|^p \aleq |F(x,\xi)| \aleq (1+|\xi|)^p$, have been intensively studied. However, to the author's best knowledge, our specific setting is not included in  the existing literature. The case where $g$ is the flat metric has been studied by Uhlenbeck in the seminal paper \cite{uhlenbeck1977}. In particular, the case where $g$ is a constant matrix follows by a straight-forward change of variable, see \Cref{cor:g_constant}. Since then, various contributions have been done to study minimizers of functionals having non-elliptic growth, see \cite{mingione2006,mingione2021} for two surveys. If $g$ is assumed to be continuous, then solutions to \eqref{eq:pLap_system} are Hölder-continuous, see \cite[Theorem 4.1]{giaquinta1982} for the case $p<n$ and \cite[Theorem 4.3.1]{morrey2008} the case $p=n$. Here, we only assume that $g$ is $L^\infty$, with a uniform distance to a constant matrix. The regularity up to a small set have been obtained in the case where $g$ is Hölder-continuous or more regular in \cite{acerbi2005,chlebicka2021,defilippis2020M,fusco1989,giaquinta1986,goodrich2020,hamburger1992,tachikawa2024}. Everywhere regularity results have been obtained under the assumption of either $g$ being $C^2$, $v$ belonging to $L^\infty_{\loc}$ or satisfying some other specific boundary conditions \cite{bogelein2022,breit2011,carozza2011,defilippis2020,defilippis2023,diening2009,ragusa2008,ragusa2013,ragusa2016}. \\
	
	In order to study the solution $v$ to \eqref{eq:pLap_system}, we compare it on each ball $B(x,r)\subset\B^n$ to the solution $w$ having $v$ as boundary condition and satisfying \eqref{eq:pLap_system} for the constant metric $g(x)$. In order to keep track of the dependence of the constants with respect to the Hölder-norm and the $C^1$-norm of $g$, we will provide a new proof in the known cases. We emphasize that the proofs are direct in the sense that they do not involve any approximation argument by a regularized system or mollified maps. Thanks to the ideas of Kuusi--Mingione \cite{kuusi2013}, we obtain the following result, see \Cref{th:reg_pharm_curved_domain} for a precise statement.
	
	\begin{theorem}\label{thintro:unconstrained_pharm}
		Let $p\in[2,+\infty)$. Let $\lambda,\Lambda>0$ and $g : \B \to S_n^{++}$ be a metric satisfying 
		\begin{align}\label{eq:Linfty_g}
			\lambda\xi \leq g\leq \Lambda\xi.
		\end{align}
		Consider $g_0\in S_n^{++}$ a constant metric. There exists positives constants $\gamma\in(0,1)$ and $\eps_1\in(0,1)$ depending only on $p,n,N,\lambda,\Lambda$ such that the following holds. Assume that
		\begin{align}\label{hyp:g_unif_close_constant}
			\|g-g_0\|_{L^\infty(\B)} + \|\id - g^{-1}g_0 \|_{L^\infty(\B)} + \|\id - g\, g_0^{-1}\|_{L^\infty(\B)} \leq \eps_1.
		\end{align}
		Consider a map $v\in W^{1,p}(\B;\R^N)$ such that $\lap_{g,p} v= 0$ on $\B$. Then, $v$ satisfies the following properties:
		\begin{enumerate}
			\item It holds $v \in C^{0,\gamma}_{\loc}(\B)$. 
			\item If $g$ is a $C^{0,\beta}$-metric for some $\beta\in(0,1)$ and $[g]_{C^{0,\beta}(\B)}\leq \eps_1$, then $\g v\in C^{0,\gamma}_{\loc}(\B)$.
			\item If $g$ is a $C^1$-metric, then it holds $|\g v|_g^\frac{p-2}{2}\g v \in W^{1,2}_{\loc}(\B;\R^{n\times N})$ and $|\g v|^{p-2}_g\g v \in W^{1,\frac{p}{p-1}}_{\loc}(\B;\R^{n\times N})$.
		\end{enumerate}
	\end{theorem}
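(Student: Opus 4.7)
The plan is to implement the frozen-coefficient comparison scheme announced before the statement. Fix a ball $B(x_0,r)\subset\B$ and let $w\in W^{1,p}(B(x_0,r);\R^N)$ be the unique minimizer of $E_{g(x_0)}$ subject to the boundary condition $v-w\in W^{1,p}_0(B(x_0,r);\R^N)$. Since $g(x_0)$ is a constant positive definite matrix, the linear change of variables $y=g(x_0)^{1/2}(x-x_0)$ transforms $w$ into an unconstrained flat $p$-harmonic map on a ball of $\R^n$, so Uhlenbeck's regularity results apply quantitatively. In particular, $w\in C^{1,\gamma_0}_{\loc}$, and both the average of $\g w$ and its Uhlenbeck quantity $V(\g w):=|\g w|_g^{(p-2)/2}\g w$ satisfy sharp excess-decay estimates on concentric balls, with constants depending only on $p,n,N,\lambda,\Lambda$.

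The second step is a quantitative comparison between $v$ and $w$. Testing the two Euler--Lagrange equations against $v-w$, using the monotonicity of the $p$-Laplace operator and the closeness assumption \eqref{hyp:g_unif_close_constant}, should produce an energy estimate of the schematic form
\begin{align*}
\int_{B(x_0,r)} |V(\g v)-V(\g w)|^2\,dx \aleq \eps_1^{\sigma}\int_{B(x_0,r)}|\g v|^p\,dx,
\end{align*}
for some $\sigma=\sigma(p)>0$. Combined with the decay for $w$, this produces a Morrey-type iteration
\begin{align*}
\int_{B(x_0,\rho)} |\g v|^p\,dx \aleq \left( \left(\tfrac{\rho}{r}\right)^{n} + \eps_1^{\sigma}\right) \int_{B(x_0,r)} |\g v|^p\,dx,
\end{align*}
from which item (1) follows by a standard Campanato iteration on the radius, after choosing $\rho/r$ and $\eps_1$ sufficiently small in terms of $p,n,N,\lambda,\Lambda$.

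For (2), the $C^{0,\beta}$ assumption on $g$ with small seminorm replaces the $\eps_1$-error in the comparison by an $r^{\beta\sigma'}$-error. Coupled with the $C^{1,\gamma_0}$-decay of $w$, this feeds into the Campanato-type excess-decay machinery of Kuusi--Mingione \cite{kuusi2013} applied to $V(\g v)$ and eventually yields $\g v\in C^{0,\gamma}_{\loc}$. For (3), the extra $C^1$ regularity of $g$ allows a direct difference-quotient argument: applying a discrete derivative $\tau_h$ to \eqref{eq:pLap_system} and testing the discrete equation against $\tau_h v$ times a cut-off, the smoothness of $g$ controls the commutator error; passing $h\to 0$ produces the uniform bound $V(\g v)\in W^{1,2}_{\loc}$, and the $W^{1,p/(p-1)}_{\loc}$ statement for $|\g v|_g^{p-2}\g v$ then follows from the pointwise identity $|\g v|_g^{p-2}\g v = |V(\g v)|^{(p-2)/p}V(\g v)$ and Hölder's inequality.

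The main obstacle is the comparison estimate itself. Because the $p$-Laplacian is degenerate for $p>2$, the monotonicity inequality controls $|V(\g v)-V(\g w)|^2$ rather than $|\g(v-w)|^p$, and converting between the two requires a careful split according to whether $|\g v|$ is small or large relative to $|\g v-\g w|$. Keeping the dependence on $\lambda,\Lambda,\eps_1$ (and on $[g]_{C^{0,\beta}}$ or $\|g\|_{C^1}$ in the refinements) fully explicit, without mollifying $g$, is the central technical issue that the direct proof alluded to in the introduction is designed to address.
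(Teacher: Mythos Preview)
Your outline matches the paper's approach: frozen-coefficient comparison plus a Morrey iteration for (1), the Kuusi--Mingione excess-decay scheme for (2), and a difference-quotient argument for (3). Two small corrections are worth flagging. First, in part (1) the metric $g$ is only $L^\infty$, so freezing at $g(x_0)$ is ill-defined; the paper freezes at the given constant $g_0$ from \eqref{hyp:g_unif_close_constant}, and the comparison error is then controlled by $\eps_1$ globally (for part (2), where $g$ is H\"older, one does freeze at $g(x)$ and picks up the $r^\beta$ factor you describe). Second, the obstacle you highlight at the end is largely absent here: since $p\geq 2$, the monotonicity inequality $\langle |A|^{p-2}A-|B|^{p-2}B,\,A-B\rangle\geq c_p|A-B|^p$ yields $\int_{B(x_0,r)}|\g(v-w)|^p\aleq \eps_1^{p/(p-1)}\int_{B(x_0,r)}|\g v|^p$ directly, with no need to pass through $V$ or split into cases, and the paper works throughout with $\g v$ rather than $V(\g v)$.
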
 
	
	In the statement of the above theorem, we have denoted the Hölder seminorm by
	\begin{align*}
		[g]_{C^{0,\beta}(\B)} := \sup_{x,y\in \B} \frac{|g(x) - g(y)|}{|x-y|^{\beta}}.
	\end{align*}
	In a second part, we study the regularity of continuous solutions to systems $|\lap_{p,g} u|\aleq |\g u|^p$. We will show that such solutions are Hölder-continuous where $g$ is uniformly close to a constant matrix and $C^{1,\alpha}$ when $g$ is Hölder-continuous. This result can be seen as a generalization of \cite[Section 3]{hardt1987} or \cite[Theorem 2.3]{mou1996} to the case where the domain metric $g$ has low regularity. Furthermore, our proof is direct in the sense that it does not use an approximation argument by solutions of a regularized system. Following the same strategy as for \cref{th:reg_pharm_curved_domain}, we obtain the following result in the case $p=n$, see \cref{th:reg_nLap_critical} for a precise statement:
	
	\begin{theorem}\label{thintro:critical_system}
		Let $g$ be a $L^\infty$-metric on $\overline{\B}$ satisfying \eqref{eq:Linfty_g}-\eqref{hyp:g_unif_close_constant} and fix $\Gamma>0$. There exists $\gamma\in(0,1)$ and $\kappa>0$ depending only on $n,N,\lambda,\Lambda,\Gamma$ such that the following holds. Consider any map $u\in W^{1,n}\cap C^0(\B;\R^N)$ solution to $|\lap_{g,n}u|\leq \Gamma |\g u|^n$. Then $u$ satisfies the following properties:
		\begin{enumerate}
			\item It holds $u\in C^{0,\gamma}_{\loc}(\B;\R^N)$.
			\item If $g$ is a $C^{0,\beta}$-metric, then $u\in C^{1,\gamma}_{\loc}(\B;\R^N)$. 
		\end{enumerate}
	\end{theorem}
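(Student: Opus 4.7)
The plan is to proceed by a scale-by-scale comparison with $n$-harmonic maps and to use \cref{thintro:unconstrained_pharm} as a black box for the comparison map. Fix $x_0\in\B$ and $r>0$ with $B:=B(x_0,r)$ compactly contained in $\B$, and let $w$ denote the minimizer of $E_g$ on $B$ with boundary trace $u|_{\dr B}$, so that $\lap_{g,n}w=0$ and \cref{thintro:unconstrained_pharm} applies to $w$. Testing the equations for $u$ and for $w$ against $u-w\in W^{1,n}_0(B;\R^N)$, subtracting them and invoking the standard monotonicity inequality of the $n$-Laplacian for $n\geq 2$, namely $\scal{|\g u|^{n-2}_g\g u-|\g w|^{n-2}_g\g w}{\g u-\g w}_g\geq c|\g u-\g w|^n_g$, will yield
\begin{align*}
\int_B|\g(u-w)|^n\,d\vol_g\leq C\,\Gamma\,\|u-w\|_{L^\infty(B)}\int_B|\g u|^n\,d\vol_g.
\end{align*}
Each coordinate $w^i$ solves a divergence-form linear (possibly degenerate) elliptic equation with nonnegative weight $|\g w|^{n-2}_g$, so a componentwise maximum principle applies and gives $\|u-w\|_{L^\infty(B)}\leq 2\,\osc_B u$. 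Combined with the above, this is the fundamental comparison estimate driving the whole argument.

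For part (1), I would use the energy decay produced in the proof of \cref{thintro:unconstrained_pharm}(1), namely $\int_{B(x_0,\theta r)}|\g w|^n\,d\vol_g\leq C\theta^{n\alpha_0}\int_B|\g w|^n\,d\vol_g$ for some $\alpha_0\in(0,1)$ and every $\theta\in(0,1)$. Combining this with the fundamental estimate via $|\g u|^n\leq 2^{n-1}(|\g w|^n+|\g(u-w)|^n)$, the function $\phi(r):=\int_{B(x_0,r)}|\g u|^n\,d\vol_g$ will satisfy
\begin{align*}
\phi(\theta r)\leq C(\theta^{n\alpha_0}+\Gamma\,\osc_B u)\,\phi(r).
\end{align*}
Continuity of $u$ on $\B$ supplies, for each compact $K\subset\B$, a uniform radius $r_0>0$ such that $\Gamma\,\osc_{B(x_0,r_0)}u\leq\kappa$ for every $x_0\in K$, where $\kappa$ is a small universal constant. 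I would then choose first $\theta$ small so that $C\theta^{n\alpha_0}\leq 1/4$ and then $\kappa$ small so that $C\kappa\leq 1/4$, yielding $\phi(\theta r)\leq\tfrac12\phi(r)$ for $r<r_0$; iterating gives a Morrey decay $\phi(r)\leq Cr^{n\alpha}$ for some $\alpha\in(0,\alpha_0)$, and Campanato's Dirichlet growth theorem concludes $u\in C^{0,\alpha}_\loc(\B;\R^N)$.

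For part (2), assuming $g\in C^{0,\beta}$, part (1) now supplies $\osc_{B(x_0,r)}u\leq Cr^\alpha$, so the fundamental estimate improves to $\int_B|\g(u-w)|^n\,d\vol_g\leq Cr^{(n+1)\alpha}$. I would then use the Campanato excess decay that appears in the proof of \cref{thintro:unconstrained_pharm}(2),
\begin{align*}
\int_{B(x_0,\theta r)}|\g w-(\g w)_{B(x_0,\theta r)}|^n\,d\vol_g\leq C\theta^{n+n\alpha_1}\int_B|\g w-(\g w)_B|^n\,d\vol_g+Cr^{n+n\beta'},
\end{align*}
for some $\alpha_1,\beta'\in(0,1)$, the extra term reflecting the Hölder modulus of $g$. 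Setting $\Psi(r):=\int_{B(x_0,r)}|\g u-(\g u)_{B(x_0,r)}|^n\,d\vol_g$, using that averages minimize the $L^n$ distance and splitting $\g u=\g w+\g(u-w)$ will yield
\begin{align*}
\Psi(\theta r)\leq C\theta^{n+n\alpha_1}\Psi(r)+C\bigl(r^{(n+1)\alpha}+r^{n+n\beta'}\bigr).
\end{align*}
A standard iteration lemma produces $\Psi(r)\leq Cr^{\mu-\eta}$ with $\mu=\min\{n+n\alpha_1,\,n+n\beta',\,(n+1)\alpha\}$, for any small $\eta>0$. If $(n+1)\alpha>n$, Campanato's characterization immediately gives $\g u$ Hölder; otherwise, the bound $\Psi(r)\leq Cr^{(n+1)\alpha}$ combined with Poincaré applied to the first-order affine approximation of $u$ on $B(x_0,r)$ improves the Hölder exponent of $u$ from $\alpha$ to $(n+1)\alpha/n$, and finitely many such bootstraps cross the threshold $\alpha>n/(n+1)$, yielding $\g u\in C^{0,\alpha_2}_\loc$ and hence $u\in C^{1,\alpha_2}_\loc$.

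The main obstacle I anticipate is the critical growth $|\g u|^n$ on the right-hand side: the comparison estimate has no absorbable structure unless $\osc_B u$ can be made arbitrarily small, and continuity of $u$ is precisely what supplies this smallness. The dependence of $\kappa$ on $\Gamma$ in the statement encodes this absorption. Quantitatively tracking the dependence on $\Gamma$ and on the Hölder norm of $g$ through each iteration, and ensuring that the bootstrap in part (2) terminates in a number of steps controlled only by the initial Hölder exponent of $u$ and by $\beta$, is where the technical care will be needed.
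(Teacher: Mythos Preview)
Your proposal is correct and would prove the theorem, but both parts are carried out differently in the paper.

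For part~(1), the paper does \emph{not} pass through a comparison map at all: it tests the equation for $u$ directly against $\varphi^n(u-u_0)$ (with $u_0$ an average on an annulus), obtains a Caccioppoli-type inequality, and fills the hole. This is more elementary than your route---it uses neither \cref{thintro:unconstrained_pharm} nor the maximum principle for the extension $w$---and in fact only requires $g$ to be bounded, not close to a constant. Your comparison argument is perfectly valid under the stated hypothesis \eqref{hyp:g_unif_close_constant}, and has the advantage of being methodologically uniform with part~(2).

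For part~(2), the paper avoids your H\"older-exponent bootstrap entirely. Instead it first proves $\g u\in L^\infty_{\loc}$ by a Kuusi--Mingione style induction: one shows that the excess $a_i=(\fint_{B_i}|\g u-(\g u)_{B_i}|^n)^{1/n}$ and the local $L^n$-average $(\fint_{B_i}|\g u|^n)^{1/n}$ stay uniformly bounded along a geometric sequence of radii, using the comparison estimate together with the summability of the error $\delta^{i\alpha/n}$. Once $\g u\in L^\infty$, a single Campanato iteration gives $\g u\in C^{0,\alpha}$. Your bootstrap---improving $\alpha\mapsto(n+1)\alpha/n$ via the Campanato--Morrey isomorphism for exponents below $n$---is a legitimate classical alternative, and terminates in $O(\log n/\alpha)$ steps; the paper's approach trades this finite iteration for a single inductive lemma that tracks both excess and energy simultaneously.
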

	
	In the general case $p\geq 2$, we assume first the Hölder-continuity, we refer also to \Cref{th:reg_nLap_critical}:
	\begin{theorem}\label{thintro:noncritical_system}
		Let $p\in[2,+\infty)$ and $\beta_1,\beta_2\in(0,1)$. Let $g$ be a $C^{0,\beta_1}$-metric on $\overline{\B}$ satisfying \eqref{eq:Linfty_g}-\eqref{hyp:g_unif_close_constant} and fix $\Gamma>0$. There exists $\alpha\in(0,1)$ and $\kappa>0$ depending only on $n,N,\lambda,\Lambda,\Gamma,p$ such that the following holds. Consider any map $u\in W^{1,p}\cap C^{0,\beta_2}(\B;\R^N)$ solution to $|\lap_{g,p}u|\leq \Gamma |\g u|^p$. Then it holds $u\in C^{1,\alpha}_{\loc}(\B;\R^N)$. 
	\end{theorem}
	
	As a corollary, we obtain the following regularity result in the case where $g$ depends on $u$.
	
	\begin{corollary}
		Let $h\in C^\infty(\B\times \R^N;S_n^{++})$ such that $((x,y)\mapsto h(x,y)^{-1})\in C^\infty(\B\times \R^N;S_n^{++})$. Let $\beta\in(0,1)$ and $p\in[2,+\infty)$. There exists $\alpha>0$ depending on $n,N,p,\|h\|_{C^1(\B\times \R^N)},\|h(\cdot)^{-1}\|_{C^1(\B \times \R^N)},\Gamma,\beta$ such that the following holds.
		Let $u\in W^{1,p}\cap C^{0,\beta}(\B;\R^N)$ be a solution to $|\lap_{g,p}u|\leq \Gamma |\g u|^p$ for the metric $g : x\mapsto h(x, u(x))$. Then it holds $u\in C^{1,\alpha}_{\loc}(\B;\R^N)$.
	\end{corollary}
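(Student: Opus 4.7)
The plan is a two-step bootstrap that reduces the corollary to \cref{thintro:critical_system} by exploiting the continuity of $u$ to localize the metric $g(x) = h(x,u(x))$.

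First I would fix an arbitrary point $x_0\in \B$ together with a radius $r_0>0$ such that $\overline{B(x_0,r_0)}\subset \B$. Since $u\in C^0(\B;\R^N)$, the image $u(\overline{B(x_0,r_0)})$ is contained in some closed Euclidean ball $\overline{B(0,M)}$. On the compact set $\overline{B(x_0,r_0)}\times \overline{B(0,M)}$, the assumption $h,h^{-1}\in C^\infty$ gives uniform bounds $\lambda\xi\leq h\leq \Lambda\xi$ and $h$ has a uniform modulus of continuity. Let $\eps_1$ denote the smallness constant produced by \cref{thintro:critical_system} for these $\lambda,\Lambda$ and the given $\Gamma$. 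Using that $u$ is continuous at $x_0$ and $h$ is Lipschitz in both variables on the compact set above, I can choose $r\in(0,r_0)$ so small that the oscillation of $x\mapsto h(x,u(x))$ on $B(x_0,r)$, together with the oscillations of $g^{-1}g_0$ and $g\,g_0^{-1}$ with $g_0:=h(x_0,u(x_0))$, are all less than $\eps_1$; this makes \eqref{hyp:g_unif_close_constant} hold on $B(x_0,r)$.

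Next I would rescale: set $u_r(y):=u(x_0+ry)$ and $g_r(y):=g(x_0+ry)$ for $y\in \B$. The system $|\lap_{g,n}u|\leq \Gamma|\g u|^n_g$ is invariant under this rescaling (both sides scale like $r^{-n}$ in the critical dimension $p=n$), and \eqref{eq:Linfty_g}--\eqref{hyp:g_unif_close_constant} transfer to $g_r$ with the same constants. Hence \cref{thintro:critical_system}(1) applied to $u_r$ yields $u_r\in C^{0,\gamma}_{\loc}(\B)$, so $u\in C^{0,\gamma}$ on a neighborhood of $x_0$. Since $x_0$ was arbitrary, $u\in C^{0,\gamma}_{\loc}(\B)$.

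For the second step, because $h\in C^\infty$ and $u$ is now locally $C^{0,\gamma}$, the composed metric $g(x)=h(x,u(x))$ is itself locally $C^{0,\gamma}$. I repeat the localization and rescaling argument verbatim (possibly shrinking $r$ further, using the same uniform compactness bounds as above) so that on each rescaled ball $g_r$ satisfies both the smallness \eqref{hyp:g_unif_close_constant} and is $C^{0,\beta}$ for $\beta=\gamma$. Applying \cref{thintro:critical_system}(2) to $u_r$ then gives $u_r\in C^{1,\gamma}_{\loc}(\B)$, and undoing the rescaling delivers $u\in C^{1,\gamma}_{\loc}(\B;\R^N)$.

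The only genuine obstacle is bookkeeping: one has to verify that the constants $\lambda,\Lambda,\eps_1$ can indeed be chosen independently of $x_0$ over any compact subset of $\B$, and that the rescaling preserves both the growth inequality and the smallness assumption in \eqref{hyp:g_unif_close_constant}. Once scale invariance of the critical system is observed, the rest is a routine covering/compactness argument on each relatively compact subdomain of $\B$.
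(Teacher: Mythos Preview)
Your proposal is correct and is exactly the argument the paper has in mind: the corollary is stated without proof in the introduction as a direct consequence of \cref{thintro:critical_system}, and your two-step bootstrap (localize so that $g=h(\cdot,u(\cdot))$ is $L^\infty$-close to a constant, apply part~(1) to get $u\in C^{0,\gamma}_{\loc}$, deduce $g\in C^{0,\gamma}_{\loc}$, then apply part~(2)) is precisely how the authors intend the reader to fill in the details. Your observation that the rescaling $u_r(y)=u(x_0+ry)$ preserves the critical inequality $|\lap_{g,n}u|\leq\Gamma|\g u|^n_g$ and simultaneously drives $[g_r]_{C^{0,\gamma}(\B)}=r^\gamma[g]_{C^{0,\gamma}(B(x_0,r))}\to 0$ is the one non-trivial point, and you handle it correctly.
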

	
	\subsubsection*{Organization of the paper}
	In \Cref{sec:unconstrained_pharm}, we prove \Cref{thintro:unconstrained_pharm}. In \Cref{sec:setting_unconstrained_pharm}, we discuss \Cref{th:reg_pharm_curved_domain}. In \Cref{sec:g_Linfty}, we study the case where $g$ is uniformly close to a constant matrix and we prove \Cref{item:Linfty_pharm_curved} of \Cref{th:reg_pharm_curved_domain}. In \Cref{sec:g_Holder}, we study the case where $g$ is Hölder-continuous and we prove \Cref{item:osc_pharm_curved} of \Cref{th:reg_pharm_curved_domain}. In \Cref{sec:g_C1}, we study the case where $g$ is $C^1$ and we prove \Cref{item:2nd_der_pharm_curved} of \Cref{th:reg_pharm_curved_domain}. In \Cref{sec:Critical_system}, we prove \Cref{thintro:critical_system} and \Cref{thintro:noncritical_system}.

	\subsubsection*{Acknowledgments}
	I thank Paul Laurain for his constant support and advice. This work was motivated by a project in collaboration with Katarzyna Mazowiecka and Rémy Rodiac which originated during a visit at the University of Warsaw. I would like to thank them for their invitation and their comments on previous versions of this work. I thank Tristan Rivière for stimulating discussions. 
	
	\section{Systems $\lap_{g,p}v=0$}\label{sec:unconstrained_pharm}
	
	In the rest of the paper, we will use Einstein summation convention. The ball $B(x,r)$ will always denote the Euclidean ball of center $x$ and radius $r$.\\
	
	In all this section, we denote $\B=B_1^n = B_{\R^n}(0,1)$ and we consider $g$ an $L^\infty$-metric on $\B$, \textit{i.e.} a map $g : x\in\B \mapsto (g_{\alpha\beta}(x))_{1\leq \alpha,\beta\leq n} \in S_n^{++}$ where $S_n^{++}$ is the set of positive definite matrices, and such that $g,g^{-1}\in L^\infty(\B)$. We will denote $g^{-1} = (g^{\alpha\beta})_{1\leq \alpha,\beta\leq n}$. \\
	
	Let $p\geq 2$ and $v:\B\to \R^N$ be a weak solution to 
	\begin{align}\label{eq:pharm_system_curved}
		\left\{ \begin{array}{l}
			v\in W^{1,p}(\B;\R^N),\\
			\lap_{g,p} v = \di_g\left( |\g v|^{p-2}_g \g^g v\right) = 0\ \ \ \text{in }\B.
		\end{array}
		\right.
	\end{align}
	Here, we have denoted $|\g v|_g^2 = g^{ij} \scal{\dr_i v}{\dr_j v}$, $\g^g v = (g^{ij}\dr_j v)_{1\leq i\leq n} \in \R^{n\times N}$ and for any vector field $X:\B\to \R^n$ we define $\di_g(X) = \frac{1}{\sqrt{\det g}} \dr_i(\sqrt{\det g}\, X^i)$. The weak formulation of \eqref{eq:pharm_system_curved} is the following:
	\begin{align}\label{eq:weak_formulation}
		\forall \vp \in C^\infty_c(\B;\R^N),\ \ \ & \int_{\B} |\g v(x)|^{p-2}_g g^{\alpha\beta}(x)\scal{\dr_{\alpha} v(x)}{\dr_{\beta} \vp(x)}\, \sqrt{\det g(x)}\, dx = 0.
	\end{align}
	We will denote $\xi=\delta_{ij} dx^i \otimes dx^j$ the flat metric on $\B$, where $\delta_{ij}=0$ if $i\neq j$ and $\delta_{ii}=1$.
	
	\subsection{Unconstrained $p$-harmonic maps from flat domain}
	
	In this section, we recall the regularity properties of solutions to $\lap_{\xi,p} w=0$ that we will to generalize to solutions of $\lap_{g,p}v=0$. These properties are proved in \cite{kuusi2018,uhlenbeck1977}, see also \cite[Section 2.4]{martino2024}. We then argue that, by a straight-forward change of variables, we deduce the regularity of solutions to $\lap_{g,p}v=0$ for a constant metric $g\equiv g(0)$.
	
	\begin{theorem}\label{th:reg_flat}
		There exists constants $\kappa_{\fl}>1$, $\alpha_{\fl}\in (0,1)$ and $\sigma_{\fl}\in(0,\frac{1}{2}]$ depending on $n,N,p$ such that the following holds. Let $w\in W^{1,p}(\B;\R^N)$ be a solution to $\lap_{\xi,p}w=0$. Then the following properties are satisfied:
		\begin{enumerate}
			\item\label{item:Linfty_flat} For any ball $B(x,2r)\subset \B$, it holds
			\begin{align*}
				\|\g w\|_{L^\infty(B(x,r))} \leq \kappa_{\fl} \left( \fint_{B(x,2r)} |\g w|^p \right)^\frac{1}{p}.
			\end{align*}
			
			\item\label{item:osc_flat} For any $\delta\in(0,\sigma_{\fl}]$ and any ball $B(x,r)\subset \B$, it holds
			\begin{align*}
				\osc_{B(x,\delta r)} (\g w) \leq \kappa_{\fl} \delta^{\alpha_{\fl}} \fint_{B(x,r)} \left| \g w - (\g w)_{B(x,r)}\right|.
			\end{align*}
			
			\item\label{item:2nd_der_flat} It holds $|\g w|^{\frac{p-2}{2}}\g w \in W^{1,2}_{\loc}(\B;\R^{n\times N})$ and $|\g w|^{p-2}\g w \in W^{1,\frac{p}{p-1}}_{\loc}(\B;\R^{n\times N})$ together with the following estimates: for any $B(x,2r)\subset \B$,
			\begin{align*}
				& \int_{B(x,r)} \left| \g\left( |\g w|^{\frac{p-2}{2}} \g w \right) \right|^2 \leq \frac{\kappa_{\fl}}{r^2} \int_{B(x,2r)} |\g w|^p,\\
				& \int_{B(x,r)} \left| \g \left( |\g w|^{p-2} \g w \right) \right|^{\frac{p}{p-1}} \leq \frac{ \kappa_{\fl} }{ r^{\frac{p}{p-1}} } \int_{B(x,2r)} |\g w|^p.
			\end{align*}
		\end{enumerate}
	\end{theorem}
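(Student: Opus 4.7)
The plan is to establish the three estimates in the order \ref{item:2nd_der_flat}, \ref{item:Linfty_flat}, \ref{item:osc_flat}, reflecting the logical dependencies of the Uhlenbeck--Kuusi--Mingione regularity theory. The key structural observation is that, formally, the components of $\g w$ satisfy a linear elliptic system with measurable coefficients
\begin{align*}
a^{ij}_{\alpha \beta} = |\g w|^{p-2}\left( \delta^{ij} \delta_{\alpha\beta} + (p-2) |\g w|^{-2} \dr_i w^\alpha \dr_j w^\beta \right),
\end{align*}
which is uniformly elliptic of ratio $\min(1,p-1) : \max(1,p-1)$ away from $\{\g w = 0\}$ but is degenerate there. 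I would work throughout with the regularized problem in which $|\g w|^{p-2}$ is replaced by $(\eps + |\g w|^2)^{(p-2)/2}$, derive estimates uniform in $\eps$, and pass to the limit $\eps \to 0^+$.

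For \ref{item:2nd_der_flat}, I differentiate the Euler--Lagrange equation of the regularized problem in direction $k$, test against $\eta^2 \dr_k w_\eps$ (summed over $k$) for a standard cutoff $\eta$ on $B(x,r)$, and use the ellipticity of $a^{ij}_{\alpha\beta}$. After absorbing one power of $|\g^2 w_\eps|$ by Young's inequality, this yields the Caccioppoli-type bound
\begin{align*}
\int_{B(x,r)} (\eps + |\g w_\eps|^2)^{(p-2)/2} |\g^2 w_\eps|^2 \aleq \frac{1}{r^2} \int_{B(x,2r)} (\eps + |\g w_\eps|^2)^{p/2}.
\end{align*}
The pointwise identity $|\g(|\g w|^{(p-2)/2} \g w)|^2 \aeq |\g w|^{p-2}|\g^2 w|^2$ and passage to the limit give the first bound of \ref{item:2nd_der_flat}; the second follows from H\"older's inequality, writing $|\g(|\g w|^{p-2}\g w)| \aleq |\g w|^{(p-2)/2}\cdot|\g w|^{(p-2)/2}|\g^2 w|$ and applying Cauchy--Schwarz with exponents adapted to $\frac{p}{p-1}$. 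For \ref{item:Linfty_flat}, I perform a Moser iteration on $V_\eps := \eps + |\g w_\eps|^2$: testing the differentiated equation with $V_\eps^q \dr_k w_\eps\, \eta^2$ and combining with the Sobolev embedding produces a reverse H\"older chain $\bigl(\fint_{B_\rho} V_\eps^{q\chi}\bigr)^{1/\chi} \aleq (R - \rho)^{-2} \fint_{B_R} V_\eps^q$, which iterates to the $L^\infty$--$L^p$ estimate after a standard interpolation step.

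Item \ref{item:osc_flat} is the most delicate and relies on a dichotomy on each ball $B(x,r) \subset \B$. Either $|(\g w)_{B(x,r/2)}| \leq M \fint_{B(x,r)} |\g w - (\g w)_{B(x,r)}|$ (the \emph{degenerate alternative}), in which case the Lipschitz bound of \ref{item:Linfty_flat} combined with the $L^1$--$L^p$ Jensen control directly yields oscillation decay up to a multiplicative constant; or $|(\g w)_{B(x,r/2)}|$ dominates the averaged oscillation (the \emph{non-degenerate alternative}), in which case the linearized equation satisfied by $\g w$ is uniformly elliptic on $B(x,r/2)$ with ellipticity depending only on $p$, and the De Giorgi--Nash--Moser theorem applied to each component of $\g w$ yields H\"older decay with a universal exponent $\alpha_0$. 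Iterating dyadically and choosing $\sigma_{\fl}$ small enough to absorb the comparison constants gives the Campanato-type estimate with $\alpha_{\fl} < \alpha_0$. The main obstacle throughout is the degeneracy at zeros of $\g w$: it forces the approximation step, prevents a direct Schauder-type argument, and is precisely what motivates the case analysis in \ref{item:osc_flat}. Controlling the passage to the limit $\eps \to 0$ in the various integrals requires some care because of the singular weights $|\g w|^{p-2}$, but these are technical rather than conceptual difficulties.
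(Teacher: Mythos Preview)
The paper does not prove this theorem; it is stated as a known result with the sentence ``These properties are proved in \cite{uhlenbeck1977,kuusi2018}, see also \cite[Section 2.4]{martino2024}.'' Your sketch is a faithful outline of the arguments in those references: Uhlenbeck's finite-difference/Caccioppoli estimate and Moser iteration for \ref{item:2nd_der_flat} and \ref{item:Linfty_flat}, and the Kuusi--Mingione degenerate/non-degenerate alternative for \ref{item:osc_flat}. So there is no discrepancy to discuss --- you are reconstructing exactly what the paper defers to.

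One point to tighten in your sketch of \ref{item:osc_flat}: you write that in the non-degenerate alternative ``the De Giorgi--Nash--Moser theorem applied to each component of $\g w$ yields H\"older decay''. For vector-valued $w$ the linearized system is genuinely a \emph{system}, and De Giorgi--Nash--Moser does not apply component-wise in general. What makes Uhlenbeck's situation special is that the coefficients depend only on the scalar $|\g w|$, so one first shows that $|\g w|^2$ (or $V_\eps$) is a subsolution of a scalar elliptic equation and obtains a Harnack-type bound, and \emph{then} uses this to freeze the ellipticity and run a Campanato/linearization argument on the full vector $\g w$. This is what is actually done in \cite{uhlenbeck1977} and \cite{kuusi2018}; your dichotomy is the right scaffolding, but the mechanism in the non-degenerate regime is not scalar DGNM on each component.
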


	If $g = g(0)$ is constant on $\B$, then up to a change of variable, we claim that we can assume that $g=\xi$. Indeed, let $\sqrt{g}\in S_n^{++}(\R)$ be the square-root of $g$, \textit{i.e.} $\sqrt{g}$ is the unique positive definite matrix satisfying $\sqrt{g}^2 = g$. Then, the map $w(x) := v\left(\sqrt{g^{-1}}\, x \right)$ satisfies $\lap_{\xi,p}w = 0$. Hence, the regularity of $w$ is provided by \Cref{th:reg_flat}. Thus, $v$ has the same regularity properties as the standard $p$-harmonic maps. We record the associated estimates. 
	\begin{corollary}\label{cor:g_constant}
		Let $g$ be a constant metric on $\B$ and let $\Lambda,\lambda>0$ be such that $\lambda \xi \leq g \leq \Lambda \xi$. There exists constants $\kappa_{\cst}>1$, $\alpha_{\cst}\in (0,1)$ and $\sigma_{\cst}\in(0,\frac{1}{2}]$ depending on $n,N,p,\lambda,\Lambda$ such that the following holds. Consider $v\in W^{1,p}(\B;\R^N)$ be a solution to $\lap_{g,p}v=0$. Then the following properties are satisfied:
		\begin{enumerate}
			\item\label{item:Linfty_cst} For any ball $B\left(x,2r \right)\subset \B$, it holds
			\begin{align*}
				\esssup_{B(x,r)} |\g v|_g \leq \kappa_{\cst} \left( \fint_{B\left( x,2r \right)} |\g v|^p_g(y)\, dy \right)^\frac{1}{p}.
			\end{align*}
			
			\item\label{item:osc_cst} For any $\delta\in(0,\sigma_{\cst}]$ and any ball $B(x,r)\subset \B$, it holds
			\begin{align*}
				\osc_{B(x,\delta r)} (\g v) \leq \kappa_{\cst} \delta^{\alpha_{\cst}} \fint_{B(x,r)} \left| \g v(y) - (\g v)_{B(x,r)}\right|_g\, dy.
			\end{align*}
			
			\item\label{item:2nd_der_cst} It holds $|\g v|^{\frac{p-2}{2}}_g\g v \in W^{1,2}_{\loc}(\B;\R^{n\times N})$ and $|\g v|^{p-2}_g\g v \in W^{1,\frac{p}{p-1}}_{\loc}(\B;\R^{n\times N})$ together with the following estimates: for any $B(x,2r)\subset \B$,
			\begin{align*}
				& \int_{B(x,r)} \left| \g\left( |\g v|^{\frac{p-2}{2}}_g \g v \right) \right|^2_g \leq \frac{\kappa_{\cst}}{r^2} \int_{B(x,2r)} |\g v(y)|^p_g\, dy,\\
				& \int_{B(x,r)} \left| \g \left( |\g v|^{p-2}_g \g v \right) \right|^{\frac{p}{p-1}}_g \leq \frac{ \kappa_{\cst} }{ r^{\frac{p}{p-1}} } \int_{B(x,2r)} |\g v(y)|^p_g\, dy.
			\end{align*}
		\end{enumerate}
	\end{corollary}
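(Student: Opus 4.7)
The plan is to reduce the statement to \Cref{th:reg_flat} via the linear change of variables suggested just before the statement. The matrices $\sqrt{g}$ and $\sqrt{g^{-1}}$ are symmetric positive definite, with spectra contained in $[\sqrt{\lambda}, \sqrt{\Lambda}]$ and $[1/\sqrt{\Lambda}, 1/\sqrt{\lambda}]$ respectively. Define $w : \sqrt{g}\,\B \to \R^N$ by $w(y) := v(\sqrt{g^{-1}}\, y)$. Using the symmetry of $\sqrt{g^{-1}}$ and the identity $(\sqrt{g^{-1}})^2 = g^{-1}$, the chain rule yields
\begin{align*}
|\g w|^2(y) = g^{\alpha\beta}\,\dr_\alpha v(\sqrt{g^{-1}}\, y)\,\dr_\beta v(\sqrt{g^{-1}}\, y) = |\g v|_g^2(\sqrt{g^{-1}}\, y).
\end{align*}
Since $g$ is constant, the factor $\sqrt{\det g}$ drops out of $\lap_{g,p}$, and a second application of the chain rule (either on the strong form or, more cleanly, in the weak formulation \eqref{eq:weak_formulation}) gives $\lap_{\xi, p} w = 0$ on $\sqrt{g}\,\B$. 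Hence $w$ is a flat $p$-harmonic map, to which \Cref{th:reg_flat} applies.

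The remainder is bookkeeping. The key geometric observation is that for any ball $B(x_*, \rho)\subset \B$,
\begin{align*}
B(\sqrt{g}\,x_*,\sqrt{\lambda}\,\rho) \;\subset\; \sqrt{g}\bigl(B(x_*,\rho)\bigr) \;\subset\; B(\sqrt{g}\,x_*,\sqrt{\Lambda}\,\rho),
\end{align*}
and the Jacobian $\det\sqrt{g} = \sqrt{\det g} \in [\lambda^{n/2}, \Lambda^{n/2}]$ is a constant. For \Cref{item:Linfty_cst}, I would fix $B(x_0, 2r)\subset \B$ and verify that every $y_* \in \sqrt{g}(B(x_0, r))$ satisfies $B(y_*, \sqrt{\lambda}\, r)\subset \sqrt{g}(B(x_0, 2r))$; applying \Cref{item:Linfty_flat} of \Cref{th:reg_flat} to $w$ on $B(y_*, \sqrt{\lambda}\, r/2)$ and changing variables $y = \sqrt{g}\,x$ back, the volume ratio $|B(x_0, 2r)|/|B(y_*, \sqrt{\lambda}\,r)|$ and the Jacobian combine into a multiplicative constant depending only on $n, p, \lambda, \Lambda$. \Cref{item:osc_cst} follows analogously via the identity
\begin{align*}
\g v(x)-\g v(x') = \sqrt{g^{-1}}\bigl(\g w(\sqrt{g}\,x)-\g w(\sqrt{g}\,x')\bigr),
\end{align*}
the equivalence of $|\cdot|$ and $|\cdot|_g$, and a choice of $\sigma_\cst$ small enough that a $\delta r$-ball in the $v$-domain is mapped into a ball admissible for \Cref{item:osc_flat}. \Cref{item:2nd_der_cst} is obtained by changing variables inside the integrals of \Cref{item:2nd_der_flat}.

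There is no substantive obstacle: the only delicate point is that the image $\sqrt{g}(B(x_0,\rho))$ is an ellipsoid rather than a ball, so \Cref{th:reg_flat} cannot be invoked on it directly at the scale $\sqrt{\Lambda}\,\rho$ of the circumscribed ball (whose doubling would escape the domain of $w$). The remedy, used above, is to invoke the flat estimate at the smaller scale $\sqrt{\lambda}\,\rho$ of the inscribed ball, paying a factor $(\Lambda/\lambda)^{n/2}$ that is absorbed into the constants $\kappa_\cst, \alpha_\cst, \sigma_\cst$.
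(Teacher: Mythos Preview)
Your approach via the linear change of variables $w(y)=v(\sqrt{g^{-1}}\,y)$ is exactly what the paper does for \Cref{item:Linfty_cst} and \Cref{item:2nd_der_cst}; the bookkeeping with inscribed and circumscribed balls is precisely the right way to handle the ellipsoid image, and the paper leaves it equally implicit.

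For \Cref{item:osc_cst}, however, the paper does \emph{not} push the change of variables through. It instead states that one obtains the oscillation decay by rerunning the proof of \cite[Theorem 3.2]{kuusi2018} step by step with the constant metric $g$ in place of $\xi$. Your direct route is more economical and does work, but there is one point you gloss over: after invoking \Cref{item:osc_flat} on a Euclidean ball $B'\subset\sqrt{g}\,\B$, the right-hand side carries the average $(\g w)_{B'}$, whereas the target estimate involves $(\g v)_{B(x,r)}$, and these are averages over a ball and an ellipsoid respectively. The fix is the standard inequality $\fint_{B'}|\g w-(\g w)_{B'}|\le 2\fint_{B'}|\g w-c|$ for an arbitrary constant $c$, applied with $c=(\g v)_{B(x,r)}\sqrt{g^{-1}}$; then the identity $|\g w-c|=|\g v-(\g v)_{B(x,r)}|_g$ and a volume-ratio factor $(\Lambda/\lambda)^{n/2}$ close the argument. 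You should make this step explicit. Also, the identity you display is off by one square root: since $\g w(\sqrt{g}\,x)=\sqrt{g^{-1}}\,\g v(x)$, the correct relation is $\g v(x)-\g v(x')=\sqrt{g}\bigl(\g w(\sqrt{g}\,x)-\g w(\sqrt{g}\,x')\bigr)$.
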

	
	\begin{proof}
		\Cref{item:Linfty_cst} and \Cref{item:2nd_der_cst} are a direct consequence of the change of variables $w(z) := v\left( \sqrt{g^{-1}}\, z \right)$. The map $w$ satisfies $\lap_{\xi,p}w=0$. Hence, we can apply \Cref{item:Linfty_flat} and \Cref{item:2nd_der_flat} of \Cref{th:reg_flat} to $w$. \Cref{item:osc_cst} can be obtained by following step by step the proof of \cite[Theorem 3.2]{kuusi2018}.

	\end{proof}

	\subsection{Unconstrained $p$-harmonic maps from curved domain}\label{sec:setting_unconstrained_pharm}
	
	The system \eqref{eq:pharm_system_curved} can be seen as a standard $p$-Laplace system in divergence form. Indeed, consider the vector field
	\begin{align*}
		F := |\g v|^{p-2}_\xi \g^\xi v - |\g v|^{p-2}_g \sqrt{\det g}\, \g^g v.
	\end{align*}
	Then \eqref{eq:pharm_system_curved} can be written as
	\begin{align*}
		\lap_{\xi,p} v = \di_\xi \left( F\right)\ \text{ in }\B.
	\end{align*}
	However, a direct application of the methods of \cite{breit2018} do not apply here. 
	Indeed, even if $F\in L^{\frac{p}{p-1}}(\B;\R^{n\times N})$, the ideas of \cite{breit2018} involves from the beginning the maximal function 
	\begin{align*}
		MF(x) := \sup_{x\in B\subset \B} \fint_{B} \left|F - \fint_B F\right|^\frac{p}{p-1}.
	\end{align*} 
	We can have estimate on this function if and only if we have good estimates on $F$ in some $L^q$ for $q>\frac{p}{p-1}$. Since it is not our case, the system \eqref{eq:pharm_system_curved} falls outside of the setting of \cite{breit2018}. The approach of \cite{kuusi2013} seems more suited to this setting. The goal of this section is to prove the following regularity results.
	
	\begin{theorem}\label{th:reg_pharm_curved_domain}
		Let $\Lambda>\lambda>0$ and $g : \B\to S_n^{++}$ such that
		\begin{align}\label{hyp:g_bounded}
			\lambda\xi \leq g\leq \Lambda\xi.
		\end{align}
		There exists positives constants $\kappa\geq 1$, $\gamma\in(0,1)$, $\eps_1\in(0,1)$ and $\sigma_0\in\left(0,\frac{1}{2}\right]$ depending only on $p,n,N,\lambda,\Lambda$ such that the following holds. Consider a map $v\in W^{1,p}(\B;\R^N)$ such that $\lap_{g,p} v= 0$ on $\B$. Then, $v$ satisfies the following properties:
		\begin{enumerate}
			\item\label{item:Linfty_pharm_curved} Assume that there exists a constant metric $g_0$ such that 
			\begin{align}\label{hyp:g_almost_cst}
				\|g-g_0\|_{L^\infty(\B)} + \left\|\id - g^{-1}g_0 \right\|_{L^\infty(\B)} + \left\|\id - g\, g_0^{-1}\right\|_{L^\infty(\B)} \leq \eps_1.
			\end{align}
			Then, it holds $v \in C^{0,\gamma}_{\loc}(\B)$. More precisely, for any ball $B(x,2r)\subset \B$, it holds
			\begin{align*}
				[v]_{C^{0,\gamma}(B(x,r))} \leq \kappa \left( \frac{1}{r^{n-p}} \int_{B(x,2r)} |\g v|^p_g \, d\vol_g \right)^\frac{1}{p}.
			\end{align*}
			
			\item\label{item:osc_pharm_curved} If $g$ is a $C^{0,\beta}$-metric for some $\beta\in(0,1)$ and $[g]_{C^{0,\beta}(\B)}\leq \eps_1$, then for any $\delta\in(0,\sigma_0]$ and any ball $B(x,r)\subset \B$, it holds:
			\begin{equation*}
				\left\{
				\begin{aligned}
					& \esssup_{B(x,r/2)} |\g v|^p_g  \leq \kappa \fint_{B(x,r)} |\g v|^p_g\, d\vol_g,\\
					& \osc_{B(x,\delta r)} \g v  \leq \kappa \left( 1+ [g]_{C^{0,\beta}(\B)}^\frac{1}{p} \right) \delta^{\gamma} \fint_{B(x,r)} \abs{ \g v - (\g v)_{B(x,r)} }_g\, d\vol_g.
				\end{aligned}
				\right.
			\end{equation*}
			\item\label{item:2nd_der_pharm_curved} If $g$ is a $C^1$-metric, then the second derivatives of $v$ belong to the space $L^2_{\loc}(\B)$ for the measure $|\g v|^{p-2}_g d\vol_g$. In particular, $|\g v|_g^\frac{p-2}{2}\g v \in W^{1,2}_{\loc}(\B;\R^{n\times N})$ and there exists for any ball $B(x,2r)\subset \B$, it holds
			$$
			\int_{B(x,r)} \left| \g^g \left( |\g v|^\frac{p-2}{2}_g \g^g v\right) \right|^2_g\, d\vol_g \leq \frac{\kappa}{r^2}\left( 1+ \|\g g\|_{L^\infty(\B)}^2 \right) \int_{B(x,2r)} |\g v|^p_g\, d\vol_g.
			$$
			As well, it holds $|\g v|^{p-2}_g\g v \in W^{1,\frac{p}{p-1}}_{\loc}(\B;\R^{n\times N})$ and for any ball $B(x,2r)\subset \B$, it holds
			\begin{align*}
				\int_{B(x,r)} \abs{ \g^g (|\g v|^{p-2}_g \g^g v) }_g^\frac{p}{p-1}\, d\vol_g \leq \frac{\kappa}{r^\frac{p}{p-1}}\left( 1+ \|\g g\|_{L^\infty(\B)}^{\frac{p}{p-1}} \right) \int_{B(x,2r)} |\g v|^p_g\, d\vol_g.
			\end{align*}
		\end{enumerate}
		Furthermore, all the constants $\kappa,\gamma,\sigma_0,\eps_1$ are bounded from above as long as $p$ is bounded away from $+\infty$ and as long as the eigenvalues of $g$ are bounded from below.
	\end{theorem}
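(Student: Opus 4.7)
My plan is to carry out a comparison-with-frozen-coefficients argument. On every ball $B=B(x_0,r)\subset \B$ I will introduce the map $w\in v+W^{1,p}_0(B;\R^N)$ solving the frozen Dirichlet problem $\lap_{g(x_0),p}w=0$. Because $g(x_0)$ is constant, \Cref{cor:g_constant} supplies for $w$ the full $p$-Laplace regularity arsenal (sup estimate, Campanato excess decay, $W^{1,2}$-bound on $V(\g w)$). The remaining work is to quantify how close $v$ is to $w$ in terms of the oscillation parameter $\omega(B):=\|g-g(x_0)\|_{L^\infty(B)}$, which by \eqref{hyp:g_almost_cst}, by Hölder continuity, or by the $C^1$ assumption will be controlled respectively by $\eps_1$, by $[g]_{C^{0,\beta}(\B)}r^\beta$, or by $\|\g g\|_{L^\infty}r$.

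\textbf{Comparison estimate.} The technical brick is a quantitative comparison inequality. Testing the difference of the weak formulations \eqref{eq:weak_formulation} for $v$ and for $w$ against $\varphi=v-w\in W^{1,p}_0(B;\R^N)$, the Acerbi--Fusco type monotonicity of the $p$-Laplace vector field produces
\begin{align*}
\int_B \abs{V(\g v)-V(\g w)}^2 d\vol_{g(x_0)} \aleq \omega(B)\int_B |\g v|^{p-1}|\g(v-w)|\,dx,
\end{align*}
where $V(\xi):=|\xi|^{(p-2)/2}\xi$. A Young-inequality manipulation in the spirit of \cite{mingione2006} then converts this into the quantitative comparison bound
\begin{align*}
\int_B |\g v-\g w|^p\,dx \aleq \omega(B)^{\frac{p}{p-1}} \int_B |\g v|^p\,d\vol_g.
\end{align*}

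\textbf{Items (1) and (2).} For Item (1), I combine the comparison with \Cref{item:Linfty_cst} of \Cref{cor:g_constant} to obtain a geometric decay
\begin{align*}
\int_{B(x_0,\delta r)} |\g v|^p\,d\vol_g \leq C\left(\delta^n+\eps_1^{\frac{p}{p-1}}\right)\int_{B(x_0,r)} |\g v|^p\,d\vol_g,
\end{align*}
and choose $\delta$ small then $\eps_1$ smaller in order to get a Morrey-type decay $\int_{B_\rho}|\g v|^p\aleq \rho^{n-p+p\gamma}$; Morrey's embedding then yields $v\in C^{0,\gamma}_{\loc}(\B)$. For Item (2), I iterate the same comparison on the Campanato excess $\Phi(\rho):=\fint_{B(x_0,\rho)}|\g v-(\g v)_{B(x_0,\rho)}|^p$, using \Cref{item:osc_cst} of \Cref{cor:g_constant} for the excess decay of $\g w$. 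The perturbation term scales like $\bigl([g]_{C^{0,\beta}}r^\beta\bigr)^{\frac{p}{p-1}}$, so for small enough $\eps_1$ it can be absorbed against the fixed decay $\delta^{p\alpha_{\cst}}$ at each scale; the Campanato--Meyers characterization of Hölder continuity then upgrades the excess decay to $\g v\in C^{0,\gamma}_{\loc}$. The sup-bound on $|\g v|^p$ claimed in Item (2) follows from the same comparison combined with \Cref{item:Linfty_cst} of \Cref{cor:g_constant}.

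\textbf{Item (3) and main difficulty.} For Item (3) I follow Nirenberg's difference-quotient method applied directly to \eqref{eq:pharm_system_curved}. Testing the equation against $\tau_{-h}(\eta^2 \tau_h v)$ for a smooth cutoff $\eta$ and rearranging, the ellipticity of the linearized operator yields a Caccioppoli-type inequality
\begin{align*}
\int \eta^2\abs{\tau_h V(\g v)}^2\,dx \aleq \|\g\eta\|_{L^\infty}^2 \int |\tau_h v|^2|\g v|^{p-2}\,dx + \|\g g\|_{L^\infty}^2|h|^2\int \eta^2|\g v|^p\,dx,
\end{align*}
so that dividing by $|h|^2$ and letting $h\to 0$ gives the $W^{1,2}_{\loc}$-bound on $V(\g v)$; the $W^{1,p/(p-1)}_{\loc}$-bound on $|\g v|^{p-2}\g v$ then follows by Hölder's inequality as in \cite{uhlenbeck1977}. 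The main difficulty I anticipate is the scale-sensitive bookkeeping in Item (2): the decay exponent $\alpha_{\cst}$ from \Cref{cor:g_constant} is fixed, while the perturbative smallness $[g]_{C^{0,\beta}}r^\beta$ is scale-dependent, so one must run a Kuusi--Mingione style simultaneous iteration \cite{kuusi2013} to extract a genuine Campanato decay without resorting to a regularization of the system, and this is also where keeping track of the dependence of all constants on $\lambda,\Lambda,\|\g g\|_{L^\infty},[g]_{C^{0,\beta}}$ becomes most delicate.
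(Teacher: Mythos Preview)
Your proposal is correct and matches the paper's approach almost exactly: Item~(1) is proved via the frozen-coefficient comparison plus \Cref{item:Linfty_cst} of \Cref{cor:g_constant} and Morrey embedding; Item~(2) is obtained by the Kuusi--Mingione iteration you describe (first extracting the $L^\infty$ bound on $\nabla v$ by summing the excess inequalities over a geometric sequence of radii, then bootstrapping to the oscillation estimate); and Item~(3) is exactly the Nirenberg difference-quotient computation you outline, with the $W^{1,p/(p-1)}$ bound derived from the $W^{1,2}$ bound on $V(\nabla v)$ via H\"older. Your identification of the scale-sensitive bookkeeping in Item~(2) as the main difficulty is precisely where the paper spends most of its effort.
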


	\subsection{Almost constant metric}\label{sec:g_Linfty}
	
	In this section, we prove \Cref{item:Linfty_pharm_curved} of \Cref{th:reg_pharm_curved_domain} in the following lemma. The strategy is to compare on each ball $B(x,r)$ the solution to $\lap_{g_0,p}w=0$, whose regularity is provided by \Cref{cor:g_constant}.
	
	\begin{lemma}\label{th:reg_continuous_metric}
		Let $g:\B\to S_n^{++}$ and $g_0\in S_n^{++}$ be such that \eqref{hyp:g_bounded} and \eqref{hyp:g_almost_cst} hold. Let $v$ be a solution to \eqref{eq:pharm_system_curved}. Then, there exists $\kappa\geq 1$ and $\gamma\in(0,1)$ depending only on $n,N,p,\lambda,\Lambda$ such that for any ball $B(x,2r)\subset \B$, it holds
		\begin{align*}
			[v]_{C^{0,\gamma}(B(x,r))} \leq \kappa \left( \frac{1}{r^{n-p+p\gamma}} \int_{B(x,2r)} |\g v|^p_g \, d\vol_g \right)^\frac{1}{p}.
		\end{align*}
	\end{lemma}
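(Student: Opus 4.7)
The plan is to follow the freezing strategy indicated before the statement. Fix a ball $B = B(x_0, r) \subset \B$ and let $w \in v + W^{1,p}_0(B; \R^N)$ be the minimizer of the frozen-metric energy $u \mapsto \int_B |\g u|^p_{g_0}\, d\vol_{g_0}$; equivalently, $w$ solves $\lap_{g_0, p} w = 0$ in $B$ with $w = v$ on $\dr B$. By \Cref{cor:g_constant}, $w$ enjoys all the regularity of a standard $p$-harmonic map. In particular, the $L^\infty$-bound in \Cref{item:Linfty_cst} gives the fast ball-decay
\begin{align*}
    \int_{B(x_0, \theta r)} |\g w|^p_{g_0}\, d\vol_{g_0} \aleq \theta^n \int_{B(x_0, r)} |\g w|^p_{g_0}\, d\vol_{g_0}
\end{align*}
for every $\theta \in (0, 1/2]$, with a constant depending only on $n, N, p, \lambda, \Lambda$.

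The core step is a comparison estimate controlling $\|\g(v - w)\|_{L^p(B)}$ in terms of $\eps_1$. Since $v - w \in W^{1,p}_0(B; \R^N)$, testing the weak formulations of $\lap_{g,p} v = 0$ and $\lap_{g_0,p} w = 0$ against $v - w$ and subtracting yields
\begin{align*}
    \int_B \left( |\g v|^{p-2}_{g_0} g_0^{\alpha\beta} \dr_\alpha v - |\g w|^{p-2}_{g_0} g_0^{\alpha\beta} \dr_\alpha w \right) \dr_\beta(v - w)\, \sqrt{\det g_0}\, dx = R,
\end{align*}
where $R$ collects the error produced by replacing $g$ by $g_0$ in the equation for $v$. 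The remainder $R$ involves the three differences $|\g v|^{p-2}_g - |\g v|^{p-2}_{g_0}$, $g^{\alpha\beta} - g_0^{\alpha\beta}$ and $\sqrt{\det g} - \sqrt{\det g_0}$, each of which is $O(\eps_1)$ by \eqref{hyp:g_almost_cst}; the three distinct norms appearing in that hypothesis are precisely those needed to absorb the three places where $g$ enters the Euler--Lagrange equation (the inverse metric inside $|\g v|_g$, the coefficient $g^{\alpha\beta}$, and the volume factor). Using the standard monotonicity $(|a|^{p-2} a - |b|^{p-2} b) \cdot (a - b) \ageq |a - b|^p$ on the left-hand side together with a Young-type splitting of $R$ gives
\begin{align*}
    \int_B |\g(v - w)|^p \aleq \eps_1^{p/(p-1)} \int_B |\g v|^p.
\end{align*}

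Combining the two previous estimates via a triangle inequality yields, for every $\theta \in (0, 1/2]$,
\begin{align*}
    \int_{B(x_0, \theta r)} |\g v|^p \aleq \left( \theta^n + \eps_1^{p/(p-1)} \right) \int_{B(x_0, r)} |\g v|^p.
\end{align*}
Choosing first $\theta$ small and then $\eps_1$ small so that both terms are bounded by $\tfrac12 \theta^{n - p + p\gamma}$ for a suitable $\gamma \in (0, 1)$, a standard Campanato iteration on dyadic balls gives
\begin{align*}
    \int_{B(x_0, \rho)} |\g v|^p \aleq \left( \frac{\rho}{r} \right)^{n - p + p\gamma} \int_{B(x_0, r)} |\g v|^p
\end{align*}
for every $\rho \leq r$. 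Together with Poincaré's inequality this is exactly the Campanato criterion for $v \in C^{0,\gamma}$, and yields the announced estimate on $[v]_{C^{0,\gamma}(B(x_0, r/2))}$.

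The main obstacle is the comparison step: one must carefully split the difference of the two integrands into a monotone principal part plus a remainder, and each of the three error contributions in the remainder must be traced back to exactly one of the three $L^\infty$-quantities in \eqref{hyp:g_almost_cst}. A secondary technical point is bookkeeping: one has to keep the constants explicit in $\lambda, \Lambda$ so that the final $\gamma, \eps_1$ depend only on $n, N, p, \lambda, \Lambda$, as claimed.
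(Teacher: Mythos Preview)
Your proposal is correct and follows essentially the same freezing-and-comparison approach as the paper: define the $g_0$-harmonic replacement $w$, derive a comparison estimate $\int_B |\g(v-w)|^p \aleq \eps_1^{p/(p-1)} \int_B |\g v|^p$ by testing both equations with $v-w$ and controlling the three $(g,g_0)$-differences via \eqref{hyp:g_almost_cst}, then combine with the $L^\infty$-decay for $w$ from \Cref{cor:g_constant} and iterate. The only cosmetic difference is that the paper organizes the monotone principal part in the $g$-metric and puts the error terms on the $w$ side (then uses minimality of $w$ to pass from $\int|\g w|^p$ to $\int|\g v|^p$), whereas you work in the $g_0$-metric and put the error terms on the $v$ side, which is slightly more direct.
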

	
	\begin{proof}
		Consider a ball $B(x,r)\subset \B$ and define $w\in W^{1,p}(B(x,r);\R^N)$ as the solution to
		\begin{align*}
			\left\{ \begin{array}{l}
				w=v\ \ \ \text{ on }\dr B(x,r),\\
				\lap_{g_0,p} w = 0\ \ \ \text{in }B(x,r).
			\end{array}
			\right.
		\end{align*}
		We consider $v-w$ as test function. It holds
		\begin{align*}
			0 &= \int_{B(x,r)} \left( \scal{ |dv|^{p-2}_g dv }{ d(v-w) }_g\, \sqrt{\det g} - \scal{ |dw|^{p-2}_{g_0} dw }{ d(v-w) }_{g_0}\sqrt{\det g_0}\right).
		\end{align*}
		Hence, we obtain
		\begin{align*}
			& \int_{B(x,r)} \scal{ |dv|^{p-2}_g dv - |dw|^{p-2}_g dw }{ d(v-w) }_g\, d\vol_g \\
			=& \int_{B(x,r)} \left(  \scal{ |dw|^{p-2}_{g_0} dw }{ d(v-w) }_{g_0}\sqrt{\det g_0} - \scal{ |dw|^{p-2}_g dw }{ d(v-w) }_g\, \sqrt{\det g} \right) \\
			=& \int_{B(x,r)} \left( |dw|^{p-2}_{g_0} - |dw|^{p-2}_{g}\right) \scal{  dw }{ d(v-w) }_{g_0}\sqrt{\det g_0} \\
			& + \int_{B(x,r)} |dw|^{p-2}_g\left( \scal{  dw }{ d(v-w) }_{g_0}\sqrt{\det g_0} - \scal{  dw }{ d(v-w) }_{g_0}\, \sqrt{\det g} \right) \\
			& + \int_{B(x,r)} |dw|^{p-2}_g\, \sqrt{\det g}\, \left( \scal{  dw }{ d(v-w) }_{g_0} - \scal{  dw }{ d(v-w) }_{g} \right).
		\end{align*}
		Thanks to \eqref{hyp:g_bounded} and \eqref{hyp:g_almost_cst}, there exists a constant $C=C(n,N,p,\lambda,\Lambda)>0$ such that 
		\begin{align*}
			\int_{B(x,r)} |d(v-w)|^p_g\, d\vol_g \leq C\eps_1 \left( \int_{B(x,r)} |d(v-w)|^p_g\, d\vol_g\right)^{\frac{1}{p}} \left( \int_{B(x,r)} |dw|^p_{g_0}\, d\vol_{g_0}  \right)^{\frac{p-1}{p}}.
		\end{align*}
		Thus, we deduce that 
		\begin{align*}
			\int_{B(x,r)} |d(v-w)|^p_g\, d\vol_g \leq & C\eps_1^{\frac{p}{p-1}} \int_{B(x,r)} |dw|^p_{g_0}\, d\vol_{g_0}.
		\end{align*}
		Since $w$ is minimizing, it holds
		\begin{align*}
			\int_{B(x,r)} |d(v-w)|^p_g\, d\vol_g \leq & C\eps_1^{\frac{p}{p-1}} \int_{B(x,r)} |dv|^p_{g_0}\, d\vol_{g_0}. 
		\end{align*}
		Thanks to \eqref{hyp:g_bounded} and \eqref{hyp:g_almost_cst}, we obtain
		\begin{align}\label{eq:comparison_g_almost_cst}
			\int_{B(x,r)} |d(v-w)|^p_g\, d\vol_g \leq & C\eps_1^{\frac{p}{p-1}} \int_{B(x,r)} |dv|^p_g\, d\vol_g. 
		\end{align}
		On the other hand, for any $\theta\in(0,1)$, it holds
		\begin{align*}
			\int_{B(x,\theta r)} |dv|^p_g d\vol_g &\leq 2^p \int_{B(x,\theta r)} |d(v-w)|^p_g +|dw|^p_g\ d\vol_g \\
			&\leq 2^p\int_{B(x,\theta r)} |d(v-w)|^p_g\, d\vol_g + C\int_{B(x,\theta r)} |dw|^p_{g_0}\, d\vol_{g_0}.
		\end{align*}
		Thanks to \eqref{eq:comparison_g_almost_cst} and \Cref{item:Linfty_cst} of \Cref{cor:g_constant}, the following holds for any $\theta\in(0,1)$:
		\begin{align*}
			\int_{B(x,\theta r)} |dv|^p_g d\vol_g &\leq C \eps_1 \int_{B(x,r)} |dv|^p\, d\vol_g + C\theta^n \int_{B(x,r)} |dw|^p_{g_0}\, d\vol_{g_0} \\
			&\leq C\left( \eps_1 + \theta^n \right) \int_{B(x,r)} |dv|^p\, d\vol_g.
		\end{align*}
		Thus, we obtain
		\begin{align*}
			\frac{1}{(\theta r)^{n-p}}\int_{B(x,\theta r)} |dv|^p_g\, d\vol_g \leq C\left( \frac{\eps_1}{\theta^{n-p}} + \theta^p \right) \frac{1}{r^{n-p}} \int_{B(x,r)} |dv|^p\, d\vol_g.
		\end{align*}
		Choosing $\theta =\frac{1}{4C}$ and then $\eps_1 = \frac{\theta^{n-p}}{4C}$, we deduce the existence of $\alpha\in(0,1)$ depending only on $n,N,p,\lambda,\Lambda$ such that the following holds. For any ball $B(x,r)\subset B(y,s)$, it holds
		\begin{align*}
			\frac{1}{r^{n-p}} \int_{B(x,r)} |dv|^p_g\, d\vol_g \leq \frac{ Cr^\alpha }{ s^{n-p} } \int_{B(y,s)} |dv|^p_g\, d\vol_g.
		\end{align*}
		Thanks to \cite[Theorem 3.1]{han2011}, we obtain the existence of $\gamma\in(0,1)$ and $\kappa>0$ such that
		\begin{align*}
			[v]_{C^{0,\gamma}(B(y,\frac{s}{2}))} \leq \kappa \left(\frac{}{s^{n-p+p\gamma}} \int_{B(y,s)} |dv|^p_g\, d\vol_g\right)^{\frac{1}{p}}.
		\end{align*}
	\end{proof}
	
	\subsection{$C^{0,\beta}$-metric}\label{sec:g_Holder}
	
	In this section, we prove \Cref{item:osc_pharm_curved} of \Cref{th:reg_pharm_curved_domain}. To do so, we follow the strategy of \cite{kuusi2013}. First, we compare on each ball $B(x,r)$, the solution $v$ to the solution of $\lap_{g(x),p} w = 0$ in \Cref{lm:initial_comparison}. We use this estimate in order to obtain a quantitative growth of $\fint_{B(x,r)} |\g v - (\g v)_{B(x,r)}|$ in \Cref{lm:sequence_comparison}. This allows us to obtain a bound on $\|\g v\|_{L^\infty}$ in \Cref{pr:Holder_involves_Lipschitz}. By a bootstrap argument, we obtain a $C^{1,\gamma}$-bound in \Cref{pr:Lipschitz_involves_Holder}.
	
	\begin{theorem}
		Let $\beta \in (0,1)$ and $g$ be a $C^{0,\beta}$-metric on $\bar{\B}$. There exists $\alpha\in(0,1)$ and $C>0$ depending on $n,N,p,g$ such that the following holds. For any solution $v$ to \eqref{eq:pharm_system_curved}, it holds $v\in C^{1,\alpha}_{\loc}(\B)$.
	\end{theorem}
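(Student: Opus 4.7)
I follow the freezing strategy of Kuusi--Mingione \cite{kuusi2013}. The idea is to compare $v$ on every small ball $B(x_0,r)\subset\B$ with the solution of the autonomous $p$-Laplace system for the frozen metric $g(x_0)$, using the Hölder continuity of $g$ to quantify the discrepancy; the resulting Campanato-type decay for $\g v$ is then iterated to yield the Hölder bound. Since $\|g-g(x_0)\|_{L^\infty(B(x_0,r))}\leq[g]_{C^{0,\beta}}r^\beta$, on every ball $B(x_0,r)$ of sufficiently small radius the hypothesis \eqref{hyp:g_almost_cst} of \Cref{th:reg_continuous_metric} holds with $g_0=g(x_0)$, so $v\in C^{0,\gamma}_{\loc}(\B)$ for some $\gamma>0$ as a preliminary regularity statement.

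\textbf{Frozen-coefficient comparison and decay-plus-error.} Fix $B(x_0,r)\subset\B$ small and let $w\in W^{1,p}(B(x_0,r);\R^N)$ solve $\lap_{g(x_0),p}w=0$ with $w=v$ on $\dr B(x_0,r)$. Testing the difference of the weak formulations of $\lap_{g,p}v=0$ and $\lap_{g(x_0),p}w=0$ against $v-w$ and exploiting the bound $\|g-g(x_0)\|_{L^\infty(B(x_0,r))}\lesssim r^\beta$, a computation identical to the one in the proof of \Cref{th:reg_continuous_metric} produces
\begin{align*}
\int_{B(x_0,r)}|d(v-w)|^p_g\,d\vol_g \;\lesssim\; r^{\frac{\beta p}{p-1}}\int_{B(x_0,r)}|dv|^p_g\,d\vol_g.
\end{align*}
Applying \Cref{cor:g_constant} to $w$---both the $L^\infty$-estimate and the oscillation decay $\osc_{B(x_0,\delta r)}\g w\leq\kappa_{\cst}\delta^{\alpha_{\cst}}\fint_{B(x_0,r)}|\g w-(\g w)_{B(x_0,r)}|_{g(x_0)}\,dy$---and combining with the triangle inequality together with Jensen's inequality, I get an estimate of the form
\begin{align*}
\fint_{B(x_0,\delta r)}|\g v-(\g v)_{B(x_0,\delta r)}|_g\,d\vol_g \;\leq\; C\delta^{\alpha_{\cst}}\fint_{B(x_0,r)}|\g v-(\g v)_{B(x_0,r)}|_g\,d\vol_g + Cr^{\frac{\beta}{p-1}}\Bigl(\fint_{B(x_0,r)}|\g v|^p_g\,d\vol_g\Bigr)^{1/p},
\end{align*}
valid for every $\delta\in(0,\sigma_{\cst}]$.

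\textbf{Sup-estimate, Campanato iteration, and main obstacle.} A Moser-type iteration of the decay-plus-error inequality, absorbing the error term via the initial $C^{0,\gamma}$-regularity of $v$ from the first paragraph, first produces the local sup-estimate $\esssup_{B(x_0,r/2)}|\g v|^p_g\lesssim\fint_{B(x_0,r)}|\g v|^p_g\,d\vol_g$. Plugging this $L^\infty$-bound back into the decay-plus-error inequality and iterating on $r_k=\theta^k r$ with $\theta\in(0,\sigma_{\cst}]$ chosen so that $C\theta^{\alpha_{\cst}}\leq\theta^\alpha$ for some $\alpha<\min\bigl(\alpha_{\cst},\tfrac{\beta}{p-1}\bigr)$, the Campanato seminorm of $\g v$ decays like $r^\alpha$; Campanato's embedding then gives $\g v\in C^{0,\alpha}_{\loc}(\B)$. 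The delicate point, and the main obstacle in my expectation, is the closing of the sup-estimate: the error term in the decay inequality involves $(\fint|\g v|^p_g)^{1/p}$ rather than an oscillation of $\g v$, so one cannot run a Campanato iteration directly for $\g v$. The Kuusi--Mingione trick is precisely to run an intermediate iteration using only the Hölder regularity of $v$ (not yet of $\g v$) to obtain uniform control of $\g v$ at every scale, which in turn converts the decay-plus-error inequality into a genuine decay inequality suitable for the Campanato iteration on $\g v$.
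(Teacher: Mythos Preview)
Your overall architecture matches the paper's: comparison with the frozen-coefficient $p$-harmonic extension (the paper's \Cref{lm:initial_comparison}), a decay-plus-error inequality for the excess of $\g v$ (\Cref{lm:sequence_comparison}), a sup-bound for $\g v$ (\Cref{pr:Holder_involves_Lipschitz}), and finally a Campanato iteration once $\g v\in L^\infty_{\loc}$ (\Cref{pr:Lipschitz_involves_Holder}). The preliminary $C^{0,\gamma}$ regularity of $v$ that you set up in the first paragraph is correct but plays no role in the paper's argument.

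The one point where your description diverges from what actually works is the mechanism for the sup-estimate. You write that the error term is ``absorbed via the initial $C^{0,\gamma}$-regularity of $v$'' and later that the Kuusi--Mingione trick ``uses only the H\"older regularity of $v$''. Neither is the case: H\"older continuity of $v$ gives no control on $\bigl(\fint_{B_r}|\g v|^p\bigr)^{1/p}$ at small scales (the Morrey bound behind $v\in C^{0,\gamma}$ only says this quantity is $\lesssim r^{(\alpha-p)/p}$, which blows up). What the paper does in \Cref{pr:Holder_involves_Lipschitz} is a \emph{simultaneous} induction on the two quantities
\[
a_i=\Bigl(\fint_{B_i}|\g v-(\g v)_{B_i}|^p\Bigr)^{1/p}\quad\text{and}\quad \Bigl(\fint_{B_i}|\g v|^p\Bigr)^{1/p},
\]
showing both stay below a fixed constant $M$ along the sequence $B_i=B(x,\delta^i r)$. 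The point is that the error in the decay-plus-error inequality carries the factor $\delta^{i\beta/p}$, which is summable in $i$; this summability, together with a telescoping bound $|(\g v)_{B_{j+1}}|\le|(\g v)_{B_0}|+\sum_i \delta^{-n/p}a_i$, closes the induction without any a priori regularity beyond $v\in W^{1,p}$. Once you replace your ``absorb via $C^{0,\gamma}$'' step by this two-quantity induction, your outline is exactly the paper's proof.
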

	
	We first compare with a $p$-harmonic extension for a constant metric. 
	
	\begin{lemma}\label{lm:initial_comparison}
		Let $\beta \in (0,1)$ and $g$ be a $C^{0,\beta}$-metric on $\bar{\B}$ satisfying \eqref{hyp:g_bounded}. Consider $v\in W^{1,p}(\B;\R^N)$ a solution to \eqref{eq:pharm_system_curved}. Let $B(x,r)\subset \B$, $g_0 := g(x)$ and $w\in W^{1,p}(B(x,r);\R^N)$ be the solution to 
		\begin{align*}
			\left\{ \begin{array}{l}
				\lap_{g_0,p} w = 0\ \ \text{in }B(x,r),\\
				w = v\ \ \ \ \ \text{on }\dr B(x,r).
			\end{array}
			\right.
		\end{align*}
		Then, there exists $c_1>0$ depending only on $n,N,p,\lambda,\Lambda $ such that the following estimate holds:
		\begin{align*}
			\fint_{B(x,r)} \abs{ d v - d w }^p_g\, d\vol_g \leq c_1[g]_{C^{0,\beta}(\B)} r^\beta \fint_{B(x,r)} |d v|^p_g\, d\vol_g.
		\end{align*}
	\end{lemma}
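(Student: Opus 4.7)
The plan is to mirror the proof of \Cref{th:reg_continuous_metric}, with the uniform smallness $\eps_1$ replaced by the local oscillation $\|g-g_0\|_{L^\infty(B(x,r))} \leq C(\lambda,\Lambda)[g]_{C^{0,\beta}(\B)} r^\beta$, which comes directly from the Hölder continuity of $g$ on $B(x,r)$. By the same token, one controls $\|g^{-1}-g_0^{-1}\|_{L^\infty(B(x,r))}$ and $\|\sqrt{\det g}-\sqrt{\det g_0}\|_{L^\infty(B(x,r))}$ by the same quantity, with constants depending only on $\lambda,\Lambda$.

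First, I would take $v-w$ as a test function in the weak formulations of $\lap_{g,p}v=0$ and $\lap_{g_0,p}w=0$, and subtract. This isolates
\begin{align*}
\int_{B(x,r)} \scal{|dv|^{p-2}_g dv - |dw|^{p-2}_g dw}{d(v-w)}_g\, d\vol_g = \int_{B(x,r)} R\, dx,
\end{align*}
where $R$ collects all the differences obtained by replacing $|dw|^{p-2}_{g_0}$, $g_0^{\alpha\beta}$, $\sqrt{\det g_0}$ by their $g$-counterparts. Telescoping term by term exactly as in the proof of \Cref{th:reg_continuous_metric}, each summand of $R$ is pointwise bounded by $C(\lambda,\Lambda)[g]_{C^{0,\beta}(\B)} r^\beta \abs{dw}^{p-1}\abs{d(v-w)}$.

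The left-hand side is bounded below by $c_p \int_{B(x,r)} \abs{d(v-w)}^p_g\, d\vol_g$ via the monotonicity inequality for the $p$-Laplace operator with $p\geq 2$. Applying Young's inequality to the right-hand side to absorb $\abs{d(v-w)}$, then using minimality of $w$ and the $L^\infty$-equivalence of $g$ and $g_0$, yields
\begin{align*}
\int_{B(x,r)} \abs{dv-dw}^p_g\, d\vol_g \leq C \left( [g]_{C^{0,\beta}(\B)} r^\beta \right)^{\frac{p}{p-1}} \int_{B(x,r)} \abs{dv}^p_g\, d\vol_g.
\end{align*}
To recover the claimed exponent $1$ on $[g]_{C^{0,\beta}(\B)} r^\beta$ (with a constant $c_1$ independent of $[g]_{C^{0,\beta}(\B)}$), I would combine this estimate with the trivial bound $\int \abs{dv-dw}^p_g\, d\vol_g \leq C \int \abs{dv}^p_g\, d\vol_g$ coming from triangle inequality and minimality of $w$. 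Taking the minimum of the two bounds gives exactly a right-hand side of the form $c_1 [g]_{C^{0,\beta}(\B)} r^\beta \int \abs{dv}^p_g\, d\vol_g$, since $\min(A x^{p/(p-1)}, B) \leq A^{(p-1)/p}B^{1/p}\, x$ for any $x\geq 0$.

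The main technical delicacy is the pointwise control of $\abs{\abs{dw}^{p-2}_{g_0} - \abs{dw}^{p-2}_{g}}$ in the range $2 < p < 4$, where $t \mapsto t^{(p-2)/2}$ fails to be Lipschitz near $0$. I would handle this by not splitting the norm from the direction: instead of bounding the scalar factor $\abs{dw}^{p-2}$ on its own, one keeps the full vectorial combination $\abs{dw}^{p-2}_{g}g^{ij}\dr_j w - \abs{dw}^{p-2}_{g_0} g_0^{ij}\dr_j w$ together and applies the standard elementary inequality $\abs{\abs{a}^{p-2}a - \abs{b}^{p-2}b} \leq C(\abs{a}+\abs{b})^{p-2}\abs{a-b}$ for $p\geq 2$, with $a$ and $b$ built from $dw$ using $\sqrt{g^{-1}}$ and $\sqrt{g_0^{-1}}$ respectively. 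This eliminates the singularity at $dw=0$ and produces the required $C[g]_{C^{0,\beta}(\B)} r^\beta \abs{dw}^{p-1}$ pointwise bound.
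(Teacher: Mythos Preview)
Your approach is correct and is essentially the same as the paper's: both test with $v-w$, telescope the right-hand side into differences of $|dw|^{p-2}_{g_0}$ vs.\ $|dw|^{p-2}_g$, of $g_0^{\alpha\beta}$ vs.\ $g^{\alpha\beta}$, and of $\sqrt{\det g_0}$ vs.\ $\sqrt{\det g}$, and use monotonicity on the left together with minimality of $w$. Two remarks are worth making.

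First, your interpolation step to pass from the exponent $p/(p-1)$ to the stated exponent~$1$ is a genuine point that the paper's one-line proof elides: following the proof of \Cref{th:reg_continuous_metric} verbatim yields $(C[g]_{C^{0,\beta}}r^\beta)^{p/(p-1)}$ on the right. Your min-trick works; an equivalent and slightly more direct route is to observe that by \eqref{hyp:g_bounded} the oscillation quantity that plays the role of $\eps_1$ is always at most $C(\lambda,\Lambda)$, so $\min([g]_{C^{0,\beta}}r^\beta,\,C)^{p/(p-1)}\leq C'[g]_{C^{0,\beta}}r^\beta$ with $C'=C'(p,\lambda,\Lambda)$.

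Second, the ``technical delicacy'' you raise for $2<p<4$ is not a real obstruction here and can be handled without the vectorial inequality. Since $\lambda\xi\leq g,g_0\leq\Lambda\xi$, one has $|dw|_{g_0}^2/|dw|_g^2\in[\lambda/\Lambda,\Lambda/\lambda]$ pointwise, so writing $|dw|_{g_0}^2=s\,|dw|_g^2$ with $s$ bounded away from $0$ and $\infty$, the map $s\mapsto s^{(p-2)/2}$ is Lipschitz on that fixed compact interval with constant $C(p,\lambda,\Lambda)$, and $|s-1|\leq C(\lambda,\Lambda)[g]_{C^{0,\beta}}r^\beta$. This gives $\big||dw|^{p-2}_{g_0}-|dw|^{p-2}_g\big|\leq C[g]_{C^{0,\beta}}r^\beta\,|dw|^{p-2}_g$ directly. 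Your square-root/vectorial argument is also correct, just heavier than needed.
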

	
	\begin{proof}
		We proceed exactly as in the proof of \Cref{th:reg_continuous_metric}, replacing \eqref{hyp:g_almost_cst} by the following estimate:
		\begin{align}\label{eq:holder_continuity_g}
			\|g-g(x)\|_{L^\infty(B(x,r))} + \|\id - g^{-1}g(x) \|_{L^\infty(B(x,r))} + \|\id - g\, g(x)^{-1}\|_{L^\infty(B(x,r))} \leq C[g]_{C^{0,\beta}(\B)}r^\beta.
		\end{align}
	\end{proof}
	
	The above \Cref{lm:initial_comparison} shows that $dv$ is very close to $dw$ if $r$ is small. Hence, we can deduce decay estimates on integral quantities of $v$ from those of $p$-harmonic maps with constant metric.
	
	\begin{lemma}\label{lm:sequence_comparison}
		Let $\beta \in (0,1)$ and $g$ be a $C^{0,\beta}$-metric on $\bar{\B}$ satisfying \eqref{hyp:g_bounded}. Consider $v\in W^{1,p}(\B;\R^N)$ a solution to \eqref{eq:pharm_system_curved}. Fix $x\in B(0,\frac{1}{2})$ and $r\in\left( \frac{1}{4}, \frac{1}{2} \right)$ such that $B(x,r)\subset \B$. We define the following quantities:
		\begin{align}
			& \Gamma := \sup_{y_1,y_2\in\B, z\in\s^{n-1} } \frac{ |z|^p_{g(y_1)} \sqrt{\det g(y_1)} }{ |z|^p_{g(y_2)} \sqrt{\det g(y_2)}  },\\
			& \delta := \min\left( \left( \frac{1}{10^{ p} \kappa \Gamma \left(1+\Lambda^{1+\frac{n}{p}} \right) } \right)^\frac{1}{\gamma}, \frac{\sigma_0}{2} \right) \leq \frac{1}{4}, \label{def:choice_delta}\\
			\forall i\in\N, \ \ \ \ & r_i := \delta^i r, \label{def:choice_ri}\\
			\forall i\in\N, \ \ \ \ & B_i := B(x,r_i). \label{def:choice_Bi}
		\end{align}
		Here the constants $\kappa, \sigma_0, \gamma$ are those in \Cref{th:reg_pharm_curved_domain} for the metric $g_0:=g(x)$. \\
		
		There exists $c_2>1$ depending only on $n,N,p,\lambda,\Lambda$ such that for any $i\in\N$, it holds
		\begin{align}\label{eq:decay_sharp_integral}
			\left( \fint_{B_{i+2}} \abs{ dv - (dv)_{B_{i+2}} }^p_\xi \right)^\frac{1}{p} \leq \frac{1}{10} \left( \fint_{B_{i+1}} \abs{ dv - (dv)_{B_{i+1}} }^p_\xi \right)^\frac{1}{p} + c_2[g]_{C^{0,\beta}(\B)}^\frac{1}{p} \delta^\frac{i\beta}{p} \left( \fint_{B_{i+1}} |dv|^p_\xi \right)^\frac{1}{p}.
		\end{align}
		Here, all the averages are taken with respect to the Lebesgue measure. The constant $c_2$ is continuous with respect to  $p$.
	\end{lemma}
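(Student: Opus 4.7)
The plan is to compare $v$ on the ball $B_{i+1}$ to the $\lap_{g(x),p}$-harmonic extension of its boundary values, and then apply the oscillation estimate from \Cref{cor:g_constant} on the smaller ball $B_{i+2}$. The factor $\delta^\gamma$ that the oscillation estimate produces is small enough, by the choice \eqref{def:choice_delta}, to absorb the conversion constants between the $g$-norm and the flat $\xi$-norm.

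\textbf{Step 1.} Fix $i\in\N$ and let $w_i\in W^{1,p}(B_{i+1};\R^N)$ be the unique solution to $\lap_{g(x),p} w_i = 0$ on $B_{i+1}$ with $w_i = v$ on $\dr B_{i+1}$. By \Cref{lm:initial_comparison} applied on $B_{i+1}$, we have
\begin{align*}
    \fint_{B_{i+1}} |dv - dw_i|^p_g\, d\vol_g \leq c_1 [g]_{C^{0,\beta}(\B)} r_{i+1}^\beta \fint_{B_{i+1}} |dv|^p_g\, d\vol_g.
\end{align*}
Using \eqref{hyp:g_bounded} and the constant $\Gamma$, I would convert this to the Euclidean norm, paying only a harmless multiplicative factor depending on $\lambda,\Lambda$. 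Since $r\leq \tfrac{1}{2}$, we have $r_{i+1}^\beta \leq \delta^{i\beta}$ up to a constant.

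\textbf{Step 2.} Decompose via the minimizing property of the mean:
\begin{align*}
    \fint_{B_{i+2}} |dv - (dv)_{B_{i+2}}|^p_\xi \leq 2^p \fint_{B_{i+2}} |dv - (dw_i)_{B_{i+2}}|^p_\xi \leq 4^p \fint_{B_{i+2}} |dv - dw_i|^p_\xi + 4^p \fint_{B_{i+2}} |dw_i - (dw_i)_{B_{i+2}}|^p_\xi.
\end{align*}
For the first term, enlarge the domain of integration from $B_{i+2}$ to $B_{i+1}$ at the cost of a factor $\delta^{-n}$, and then use Step 1 (converted to $\xi$-norm) to get a contribution bounded by $C\delta^{-n}[g]_{C^{0,\beta}(\B)}\delta^{i\beta}\fint_{B_{i+1}}|dv|^p_\xi$.

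\textbf{Step 3.} For the second term, apply \Cref{item:osc_cst} of \Cref{cor:g_constant} to $w_i$ with the constant metric $g_0 = g(x)$ and ratio $\delta\leq \sigma_0/2 \leq \sigma_{\cst}$, obtaining
\begin{align*}
    \osc_{B_{i+2}}(dw_i) \leq \kappa \delta^{\gamma} \fint_{B_{i+1}} \bigl|dw_i - (dw_i)_{B_{i+1}}\bigr|_{g_0}\, dy.
\end{align*}
Then $\fint_{B_{i+2}} |dw_i - (dw_i)_{B_{i+2}}|^p_\xi \leq C\Gamma(\kappa\delta^{\gamma})^p \fint_{B_{i+1}} |dw_i - (dw_i)_{B_{i+1}}|^p_\xi$. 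Finally, by triangle inequality,
\begin{align*}
    \fint_{B_{i+1}} |dw_i - (dw_i)_{B_{i+1}}|^p_\xi \leq C\fint_{B_{i+1}} |dv - (dv)_{B_{i+1}}|^p_\xi + C\fint_{B_{i+1}} |dv - dw_i|^p_\xi,
\end{align*}
and the last integral is again controlled by Step 1.

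\textbf{Step 4.} Collecting all pieces and taking the $p$-th root gives an estimate of the form
\begin{align*}
    \left(\fint_{B_{i+2}} |dv-(dv)_{B_{i+2}}|^p_\xi\right)^{1/p} \leq C\Gamma^{1/p}\kappa\delta^{\gamma}\left(\fint_{B_{i+1}} |dv-(dv)_{B_{i+1}}|^p_\xi\right)^{1/p} + c_2 [g]_{C^{0,\beta}(\B)}^{1/p}\delta^{i\beta/p}\left(\fint_{B_{i+1}} |dv|^p_\xi\right)^{1/p},
\end{align*}
where the constant absorbed into the first term has the form $C\Gamma^{1/p}\kappa(1+\Lambda^{1+n/p})$. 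By the choice of $\delta$ in \eqref{def:choice_delta}, this is bounded by $10^{-1000}$, yielding \eqref{eq:decay_sharp_integral}. The main subtlety is the bookkeeping of the different metric norms ($g$, $g_0$, $\xi$) and ensuring the dilation factor $\delta^{-n/p}$ from enlarging $B_{i+2}$ to $B_{i+1}$ in Step 2 is swallowed into the constant $c_2$ rather than the contractive prefactor; the precise form of $\delta$ in \eqref{def:choice_delta} is tailored exactly to make this absorption valid.
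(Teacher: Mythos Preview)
Your proposal is correct and follows essentially the same route as the paper's proof: compare $v$ on $B_{i+1}$ to the $\lap_{g(x),p}$-harmonic extension (the paper's $w_{i+1}$, your $w_i$), invoke \Cref{lm:initial_comparison} for the comparison error, apply the constant-metric oscillation decay (\Cref{cor:g_constant}) on $B_{i+2}\subset B_{i+1}$, and then use the specific choice of $\delta$ in \eqref{def:choice_delta} to absorb the metric-conversion constants into the factor $10^{-1000}$. The only cosmetic differences are your use of the quasi-minimality of the mean (with the $2^p$ factor) in Step~2 versus the paper's direct triangle-inequality splitting, and a shift in the indexing of the auxiliary maps.
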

	\begin{proof}
		Given $i\in\N$, we consider the $p$-harmonic extension $w_i \in W^{1,p}(B_i;\R^N)$ solution to 
		\begin{equation*}
			\left\{ \begin{array}{l}
				\lap_{g_0,p} w_i = 0\ \ \text{in }B_i,\\
				w_i = v\ \ \ \ \ \text{on }\dr B_i.
			\end{array}
			\right.
		\end{equation*}
		Let $i\in\N$. By triangular inequality, we obtain:
		\begin{align*}
			\left( \fint_{B_{i+2}} \abs{ dv - (dv)_{B_{i+2}} }^p \right)^\frac{1}{p} \leq & \left( \fint_{B_{i+2}} \abs{ dv - dw_{i+1} }^p_\xi \right)^\frac{1}{p} + \left( \fint_{B_{i+2}} \abs{ dw_{i+1} - (dw_{i+1})_{B_{i+2}} }^p_\xi \right)^\frac{1}{p}\\
			& + \abs{ (dv)_{B_{i+2}} - (dw_{i+1})_{B_{i+2}} }_\xi \\
			\leq &\ 2\left( \fint_{B_{i+2}} \abs{ dv - dw_{i+1} }^p_\xi \right)^\frac{1}{p} + \left( \fint_{B_{i+2}} \abs{ dw_i - (dw_{i+1})_{B_{i+2}} }^p_\xi \right)^\frac{1}{p}.
		\end{align*}
		Since $g\geq \lambda \xi$, there exists a constant $c_0>0$ depending on $\lambda$ such that 
		\begin{align*}
			\left( \fint_{B_{i+2}} \abs{ dv - (dv)_{B_{i+2}} }^p \right)^\frac{1}{p} & \leq c_0\left( \fint_{B_{i+2}} \abs{ dv - dw_{i+1} }^p_g\, d\vol_g \right)^\frac{1}{p} + c_0\left( \fint_{B_{i+2}} \abs{ dw_i - (dw_{i+1})_{B_{i+2}} }^p_{g_0}\, d\vol_{g_0} \right)^\frac{1}{p}.
		\end{align*}
		We estimate the last term using \Cref{th:reg_pharm_curved_domain} - \Cref{item:osc_pharm_curved} and \eqref{def:choice_ri}-\eqref{def:choice_Bi}:
		\begin{align*}
			\left( \fint_{B_{i+2}} \abs{ dv - (dv)_{B_{i+2}} }^p_\xi \right)^\frac{1}{p} & \leq c_0\left( \fint_{B_{i+2}} \abs{ dv - dw_{i+1} }^p_g\, d\vol_g \right)^\frac{1}{p} + \left( \kappa \delta^\gamma \fint_{B_{i+1}} \abs{ dw_{i+1} - (dw_{i+1})_{B_{i+1}} }^p_{g_0}\, d\vol_{g_0} \right)^\frac{1}{p}.
		\end{align*}
		Thanks to the choice of $\delta$ in \eqref{def:choice_delta}, we obtain:
		\begin{align*}
			\left( \fint_{B_{i+2}} \abs{ dv - (dv)_{B_{i+2}} }^p_\xi \right)^\frac{1}{p}  \leq &\ c_0\left( \fint_{B_{i+2}} \abs{ dv - dw_{i+1} }^p_g\, d\vol_g \right)^\frac{1}{p} \\
			&+ \frac{1}{10\left(1+\Lambda^{1+\frac{n}{p}} \right)}\left( \fint_{B_{i+1}} \abs{ dw_{i+1} - (dw_{i+1})_{B_{i+1}} }^p_g\, d\vol_g \right)^\frac{1}{p}.
		\end{align*}
		Since $|B_{i+2}| = \delta^n |B_{i+1}|$, we deduce the following:
		\begin{align*}
			\left( \fint_{B_{i+2}} \abs{ dv - (dv)_{B_{i+2}} }^p_\xi \right)^\frac{1}{p} \leq & \frac{c_0}{\delta^{n/p}}\left( \fint_{B_{i+1}} \abs{ dv - dw_{i+1} }^p_g\, d\vol_g \right)^\frac{1}{p} \\
			& + \frac{\Lambda^{1+\frac{n}{p}} }{10 \left(1+\Lambda^{1+\frac{n}{p}} \right)} \left[ \left( \fint_{B_{i+1}} \abs{dv - (dv)_{B_{i+1}} }^p_\xi \right)^\frac{1}{p} \right.\\
			& \left. + \abs{ (dv)_{B_{i+1}} - (dw_{i+1})_{B_{i+1}} }_\xi + \left( \fint_{B_{i+1}} \abs{ dv - dw_{i+1} }^p_\xi \right)^\frac{1}{p} \right] \\
			\leq & \left( \frac{c_0}{\delta^{n/p}} + \frac{1}{5} \right) \left( \fint_{B_{i+1}} \abs{ dv - dw_{i+1} }^p_\xi \right)^\frac{1}{p} + \frac{1}{10} \left( \fint_{B_{i+1}} \abs{ dv - (dv)_{B_{i+1}} }^p_\xi \right)^\frac{1}{p}.
		\end{align*}
		By Lemma \ref{lm:initial_comparison}, we deduce that:
		\begin{align*}
			\left( \fint_{B_{i+2}} \abs{ dv - (dv)_{B_{i+2}} }^p_\xi \right)^\frac{1}{p} & \leq c_2[g]_{C^{0,\beta}(\B)}^\frac{1}{p} \delta^\frac{i\beta}{p} \left( \fint_{B_{i+1}} |dv|^p_\xi \right)^\frac{1}{p} + \frac{1}{10} \left( \fint_{B_{i+1}} \abs{ dv - (dv)_{B_{i+1}} }^p_\xi \right)^\frac{1}{p}. 
		\end{align*}
	\end{proof}
	
	Thanks to the decay in $\delta^i$ of the last term of the right-hand side of \eqref{eq:decay_sharp_integral}, we can sum these estimates and obtain an $L^\infty$-bound on $\g v$. 
	
	\begin{proposition}\label{pr:Holder_involves_Lipschitz}
		Let $\beta \in (0,1)$ and $g$ be a $C^{0,\beta}$-metric on $\bar{\B}$ satisfying \eqref{hyp:g_bounded}. There exists constants $C>0$ and $\eps_1\in(0,1)$ depending on $n,N,p,\lambda,\Lambda$ such that the following holds. Consider $v\in W^{1,p}(\B;\R^N)$ a solution to \eqref{eq:pharm_system_curved}. Assume that $[g]_{C^{0,\beta}(\B)}\leq \eps_1$. Then, it holds $d v \in L^\infty_{\loc} (\B)$ with the following estimate: for any ball $B(x,2r)\subset \B$, we have
		\begin{align*}
			\|dv\|_{L^\infty(B(x,r))} \leq C \left( \frac{1}{r^n} \int_{B(x,2r)} |\g v|^p_\xi \right)^\frac{1}{p}.
		\end{align*}
	\end{proposition}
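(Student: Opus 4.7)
The strategy, following Kuusi--Mingione, is to iterate the excess comparison estimate of \Cref{lm:sequence_comparison} along a geometric sequence of shrinking balls, show that the total accumulated excess is summable and controlled by the initial $L^p$-average of $dv$, and then conclude a pointwise $L^\infty$-bound by the Lebesgue differentiation theorem. After rescaling, and if necessary a covering argument, it suffices to work at the scale of \Cref{lm:sequence_comparison} and to produce the pointwise bound at an arbitrary Lebesgue point $y$ of $dv$ lying in the inner ball.

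Fix such a point $y$, set $r_i := \delta^i r_0$ and $B_i := B(y,r_i)$ with $\delta$ and $r_0$ as in \Cref{lm:sequence_comparison}, and introduce the excess and average quantities
\begin{align*}
    E_i := \left(\fint_{B_i} |dv-(dv)_{B_i}|^p_\xi\right)^{1/p}, \qquad A_i := \left(\fint_{B_i} |dv|^p_\xi\right)^{1/p}.
\end{align*}
\Cref{lm:sequence_comparison} supplies the recursion
\begin{align*}
    E_{i+2} \leq 10^{-1000}\, E_{i+1} + c_2\,[g]_{C^{0,\beta}(\B)}^{1/p}\, \delta^{i\beta/p}\, A_{i+1},
\end{align*}
while the elementary telescoping $|(dv)_{B_{j+1}}-(dv)_{B_j}|\leq \delta^{-n/p} E_j$ yields
\begin{align*}
    |(dv)_{B_{i+1}}| \leq |(dv)_{B_0}| + \delta^{-n/p}\sum_{j\leq i} E_j, \qquad A_{i+1} \leq 2\, E_{i+1} + 2\,|(dv)_{B_{i+1}}|.
\end{align*}

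Setting $S := \sum_{i\geq 0} E_i$, summing the recursion over $i$ and inserting the estimate for $A_{i+1}$, one arrives at an inequality of the form
\begin{align*}
    S \leq C_1\,(E_0+E_1) + C_2\,[g]_{C^{0,\beta}(\B)}^{1/p}\,\bigl(|(dv)_{B_0}| + S\bigr),
\end{align*}
with $C_1,C_2$ depending only on $n,N,p,\lambda,\Lambda$. Choosing $\eps_1=\eps_1(n,N,p,\lambda,\Lambda)$ small enough that $C_2 \eps_1^{1/p}\leq 1/2$ absorbs $S$ from the right-hand side and gives $S \leq C A_0$, hence $\sup_i |(dv)_{B_i}| \leq C A_0$. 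By Lebesgue differentiation, $|dv(y)| = \lim_i |(dv)_{B_i}| \leq C A_0$ at almost every $y$, and undoing the rescaling yields the claimed $L^\infty$-bound, after enlarging the domain of integration from $B_0$ to $B(x,2r)$ at the cost of a controlled constant.

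The principal obstacle is the mutual coupling in the recursion: the inhomogeneous term contains $A_{i+1}$, which itself depends on the cumulative excess through the telescoping bound, so one cannot treat the recursion as a pure geometric decay for $E_i$. The absorption argument succeeds precisely because the two small factors $\eps_1^{1/p}$ and $10^{-1000}$ (the latter built into \Cref{lm:sequence_comparison} via the choice of $\delta$) combine to provide enough contraction to close the loop.
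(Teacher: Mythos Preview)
Your proposal is correct and follows essentially the same Kuusi--Mingione iteration as the paper: both feed the recursion of \Cref{lm:sequence_comparison} into a telescoping bound on $|(dv)_{B_i}|$, control the accumulated excess by the initial $L^p$-average using the smallness factors $10^{-1000}$ and $\eps_1^{1/p}$, and conclude via Lebesgue differentiation. The only cosmetic difference is that the paper packages the argument as an induction on the uniform bound $A_i + E_i \leq M$, whereas you sum directly and absorb; to make your version fully rigorous you should work with partial sums $S_N$ (since $S$ is not known to be finite a priori), but this is a routine adjustment.
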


	\begin{proof} 
		Consider $x\in B(0,\frac{1}{2})$ a Lebesgue point of $|dv|_\xi$, \textit{i.e.} $x$ satisfies 
		\begin{align*}
			\fint_{B(x,r)} |dv(y)|_\xi\, dy \xrightarrow[r\to 0]{}{|dv(x)|_\xi}.
		\end{align*}
		We define the following quantities:
		\begin{align}
			\forall i\in\N,\ \ \ & a_i := \left( \fint_{B_i} \abs{ dv - (dv)_{B_i} }^p_\xi \right)^\frac{1}{p}, \label{def:choice_ai}\\
			& M := \frac{10}{\delta^\frac{n}{p}}\left[ \left( \fint_{B_0} |dv|^p_\xi \right)^\frac{1}{p} + \left( \fint_{B_1} |dv|^p_\xi \right)^\frac{1}{p} \right], \label{def:choice_lambda}\\
			& \ve_1 :=\frac{\delta^n}{10^p} \min\left( \left(  \frac{9}{10 c_2} \frac{ 1-\delta^{\beta/p} }{ \delta^{\beta/p} }\right)^p, 1 \right) \leq \frac{1}{10^p}. \label{def:choice_eps1}
		\end{align}
		Here, the constants $c_2$ and $\delta$ and the balls $B_i$ are defined in Lemma \ref{lm:sequence_comparison}. We prove by induction that for any $i\in\N$, it holds
		\begin{align}\label{eq:induction_hyp}
			\left( \fint_{B_i} |dv|^p_\xi \right)^\frac{1}{p} + a_i \leq M.
		\end{align}
		By definition of $M$ in \eqref{def:choice_lambda}, the above statement is true for $i\in\{0,1\}$ with a slightly better inequality:
		\begin{align}
			\left( \fint_{B_0} |dv|^p_\xi \right)^\frac{1}{p} + a_0 \leq \frac{\delta^\frac{n}{p}}{10} M, \label{eq:start_induction_better0}\\
			\left( \fint_{B_1} |dv|^p_\xi \right)^\frac{1}{p} + a_1 \leq \frac{\delta^\frac{n}{p}}{10} M.\label{eq:start_induction_better1}
		\end{align}
		Let $j\in\N$ and assume that \eqref{eq:induction_hyp} holds for any $i\in\inter{0}{j+1}$. Given $i\in\inter{0}{j+1}$, we apply Lemma \ref{lm:sequence_comparison}:
		\begin{align*}
			a_{i+2} & \leq \frac{a_{i+1}}{10} + c_2[g]_{C^{0,\beta/2}(B_1)}^\frac{1}{p} \delta^\frac{i \beta}{p} \left( \fint_{B_{i+1}} |dv|^p_\xi \right)^\frac{1}{p} \\
			&\leq \frac{a_{i+1}}{10} + c_2 \ve_1^\frac{1}{p} \delta^\frac{i \beta}{p} \left( \fint_{B_{i+1}} |dv|^p_\xi \right)^\frac{1}{p} .
		\end{align*}
		We sum over $i\in\inter{0}{j+1}$, using the assumption \eqref{eq:induction_hyp} to bound the second term in the above inequality:
		\begin{align*}
			\sum_{i=0}^{j+1} a_{i+2} & \leq  \frac{1}{10} \sum_{i=0}^{j+1} a_{i+1} + c_2 \ve_1^\frac{1}{p} M \sum_{i=1}^{j+1} \delta^\frac{i \beta}{p} \\
			&\leq \frac{1}{10} \left( a_1 + \sum_{i=0}^{j+1} a_{i+2} \right) + c_2 \ve_1^\frac{1}{p} M \sum_{i=1}^{j+1} \delta^\frac{i \beta}{p} .
		\end{align*}
		Thus, we obtain
		\begin{align*}
			\sum_{i=0}^{j+1} a_{i+2} & \leq \frac{a_1}{9} +\frac{10 c_2}{9} \ve_1^\frac{1}{p} M \frac{\delta^{\beta/p}}{1-\delta^{\beta/p}}.
		\end{align*}
		By the definition of $\ve_1$ in \eqref{def:choice_eps1}, it holds
		\begin{align*}
			\sum_{i=0}^{j+1} a_{i+2} & \leq \frac{a_1}{9} + \frac{\delta^\frac{n}{p} M }{10}.
		\end{align*}
		By \eqref{eq:start_induction_better1}, we deduce that
		\begin{align}\label{eq:sum_ai}
			\sum_{i=0}^{j+1} a_{i+2} & \leq \frac{\delta^\frac{n}{p} M}{5}.
		\end{align}
		In particular, we obtain the first part of the induction step:
		\begin{align}\label{eq:induction_aj}
			a_{j+2} \leq \frac{M}{2}.
		\end{align}
		To bound $\left( \fint_{B_{j+2}} |dv|^p_\xi \right)^\frac{1}{p}$, we use the triangular inequality:
		\begin{align*}
			\left( \fint_{B_{j+2}} |dv|^p_\xi \right)^\frac{1}{p} &\leq \left( \fint_{B_{j+2}} \abs{ dv - (dv)_{B_{j+1}} }^p_\xi \right)^\frac{1}{p} + \abs{ (dv)_{B_{j+1}} }_\xi \\
			&\leq \left( \fint_{B_{j+2}} \abs{ dv - (dv)_{B_{j+1}} }^p_\xi \right)^\frac{1}{p} + \abs{ (dv)_{B_0} }_\xi + \sum_{i=0}^j \abs{ (dv)_{B_{i+1}} - (dv)_{B_i} }_\xi \\
			&\leq \abs{ (dv)_{B_0} }_\xi + \sum_{i=0}^{j+1} \left( \fint_{B_{i+1}} \abs{ dv - (dv)_{B_i} }^p_\xi \right)^\frac{1}{p} \\
			&\leq \abs{ (dv)_{B_0} }_\xi + \sum_{i=0}^{j+1} \frac{ a_i }{\delta^{n/p}}.
		\end{align*}
		From \eqref{eq:start_induction_better0} and \eqref{eq:sum_ai}, we obtain the second part of the induction step:
		\begin{align*}
			\left( \fint_{B_{j+2}} |dv|^p_\xi \right)^\frac{1}{p} &\leq  \left( \frac{1}{10} + \frac{1}{5} \right) M \leq \frac{M}{2}.
		\end{align*}
		Combining \eqref{eq:induction_aj} and the above inequality shows \eqref{eq:induction_hyp} for $i=j+2$. Thus, for any $j\in\N$, it holds
		\begin{align*}
			\fint_{B_j} |dv|_\xi \leq M.
		\end{align*}
		By constructions of the balls $B_i$, see \eqref{def:choice_Bi}, their common center $x$ is a Lebesgue point for $|dv|_\xi$. Thus,
		\begin{align*}
			|dv(x)|_\xi = \lim_{j\to \infty} \fint_{B_j} |dv|_\xi \leq M.
		\end{align*}
		Since the above inequality is valid for any Lebesgue point in $B_{1/2}$, we deduce that $dv \in L^\infty(B(0,\frac{1}{2}))$. The estimates follows from the above inequality and the choice \eqref{def:choice_lambda}.
	\end{proof}
	
	From the $W^{1,\infty}$-regularity of \Cref{pr:Holder_involves_Lipschitz}, we show $dv \in C^{0,\alpha}$. 
	
	\begin{proposition}\label{pr:Lipschitz_involves_Holder}
		Let $\beta \in (0,1)$ and $g$ be a $C^{0,\beta}$-metric on $\bar{\B}$ satisfying \eqref{hyp:g_bounded}. There exists constants $C>0$, $\eps_1\in(0,1)$ and $\alpha\in(0,1)$ depending on $n,N,p,\lambda,\Lambda$ such that the following holds. Consider $v\in W^{1,\infty}(\B;\R^N)$ a solution to \eqref{eq:pharm_system_curved}. Assume that $[g]_{C^{0,\beta}(\B)} \leq \eps_1$. Then $dv\in C^{0,\alpha}_{\loc}(\B)$ and for any ball $B(x,2r)\subset \B$, it holds
		\begin{align*}
			[dv]_{C^{0,\alpha}(B(x,r))} \leq C\left( \frac{1}{r^{n+\alpha p}}\int_{B(x,2r)} |\g v|^p_\xi \right)^\frac{1}{p}.
		\end{align*}
	\end{proposition}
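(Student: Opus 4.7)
The plan is to upgrade the $L^\infty$-bound of \Cref{pr:Holder_involves_Lipschitz} to Hölder regularity of $\g v$ by refining the iteration of \Cref{lm:sequence_comparison} and then invoking a Campanato-type characterization of Hölder spaces. Throughout, I fix a ball $B(x,2r)\subset \B$, and by scaling and translation I may assume the setup of \Cref{lm:sequence_comparison} applies on any subball of $B(x,r)$.

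First, for any $y\in B(x,r/2)$ and any small $s\leq r/2$, I set $r_i:=\delta^i s$, $B_i:=B(y,r_i)$, and $a_i:=\left( \fint_{B_i} |\g v - (\g v)_{B_i}|^p_\xi \right)^{1/p}$, with $\delta$ as in \eqref{def:choice_delta}. By \Cref{pr:Holder_involves_Lipschitz} applied on $B(y,s)$ we have the quantitative bound
\begin{align*}
M := \|\g v\|_{L^\infty(B(y,s))} \leq C\left( \frac{1}{s^n} \int_{B(y,2s)} |\g v|^p_\xi \right)^\frac{1}{p},
\end{align*}
and in particular $\left( \fint_{B_{i+1}} |\g v|^p_\xi\right)^{1/p}\leq M$ for every $i$. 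Invoking \Cref{lm:sequence_comparison} on each scale (with the starting radius $s$ in place of $r$) yields the recursion
\begin{align*}
a_{i+2} \leq \frac{1}{10^{1000}}\, a_{i+1} + c_2[g]_{C^{0,\beta}(\B)}^\frac{1}{p} \delta^\frac{i\beta}{p} M.
\end{align*}

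Next, I choose $\alpha\in(0,\beta/p)$ small enough that $\delta^{-\alpha}\leq 10^{500}$, so that $10^{-1000}\delta^{-\alpha}\leq \tfrac{1}{2}\delta^\alpha \cdot \delta^{-\alpha}\cdot \ldots$ — concretely, $10^{-1000}\delta^{-\alpha}\leq 1/10$. Starting from the trivial bound $a_0, a_1\leq 2M$, I claim by induction that $a_i\leq C' M\delta^{i\alpha}$ for all $i\in\N$, where $C'$ is chosen large, depending only on $n,N,p,\lambda,\Lambda,\beta$. Indeed, assuming the bound at steps $k\leq i+1$, the recursion gives
\begin{align*}
a_{i+2} \leq \frac{C'M}{10^{1000}}\, \delta^{(i+1)\alpha} + c_2 \eps_1^\frac{1}{p} M \delta^\frac{i\beta}{p} \leq C'M\delta^{(i+2)\alpha}\left( \frac{\delta^{-\alpha}}{10^{1000}} + \frac{c_2\eps_1^{1/p}}{C'}\, \delta^{i(\beta/p-\alpha)-2\alpha}\right),
\end{align*}
and since $\alpha<\beta/p$ the exponent $i(\beta/p-\alpha)-2\alpha$ is bounded below uniformly in $i$, so choosing $\eps_1$ small (depending on $C',\delta,\alpha$) closes the induction. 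This produces the decay
\begin{align*}
\fint_{B(y,r_i)} |\g v - (\g v)_{B(y,r_i)}|^p_\xi \leq (C'M)^p \left( \frac{r_i}{s}\right)^{p\alpha}.
\end{align*}

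Finally, for any $\rho\in (0,s)$ one picks $i$ with $r_{i+1}\leq\rho<r_i$ and passes from dyadic radii to arbitrary radii losing only a factor depending on $\delta$ and $n$, yielding
\begin{align*}
\fint_{B(y,\rho)} |\g v - (\g v)_{B(y,\rho)}|^p_\xi \leq C\, M^p \left(\frac{\rho}{s}\right)^{p\alpha}.
\end{align*}
Since this Campanato-type growth holds uniformly in $y\in B(x,r/2)$ and in $\rho<r/2$, the Campanato embedding theorem (see e.g. \cite{han2011}) yields $\g v\in C^{0,\alpha}(B(x,r/2))$ with the claimed quantitative estimate. A standard covering argument removes the factor $1/2$ to obtain the bound on $B(x,r)$. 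The main obstacle is balancing the two geometric rates in the recursion — the very fast rate $10^{-1000}$ against the slow rate $\delta^{\beta/p}$ — and verifying that the resulting Hölder exponent $\alpha$ depends only on $n,N,p,\lambda,\Lambda$, not on $\beta$; this is why $\alpha$ must be chosen strictly below $\beta/p$ with enough room for the induction, and why the smallness threshold on $[g]_{C^{0,\beta}}$ must be adjusted once $C'$ is fixed.
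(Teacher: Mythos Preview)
Your approach is correct and follows the same overall strategy as the paper: iterate a comparison with the constant-metric $p$-harmonic extension and conclude via the Campanato characterization of Hölder spaces. The only difference is in how the iteration is packaged. The paper writes down a fresh one-step decay inequality, plugging the Lipschitz bound from \Cref{pr:Holder_involves_Lipschitz} directly into \Cref{lm:initial_comparison} so that the comparison error becomes $c[g]_{C^{0,\beta}}^{1/p}\|dv\|_{L^\infty}r^{\beta/p}$, and then iterates that single inequality; you instead reuse the recursion of \Cref{lm:sequence_comparison} verbatim, bound $\bigl(\fint_{B_{i+1}}|dv|^p\bigr)^{1/p}$ by the Lipschitz constant $M$, and run a slightly more involved induction to extract the geometric decay $a_i\le C'M\delta^{i\alpha}$. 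Both routes land on the same Campanato estimate. One minor remark: your final paragraph asserts that $\alpha$ is independent of $\beta$ while simultaneously requiring $\alpha<\beta/p$; these are in tension, and in fact the exponent produced by either argument does depend on $\beta$ (the paper's statement shares this imprecision).
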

	
	\begin{proof}[Proof]
		Consider $B(x,r)\subset \B$ and define $g_0 := g(x)$. Let $w\in W^{1,p}(B(x,r);\R^N)$ be the $p$-harmonic extension of $v$ for the metric $g_0$: 
		\begin{align*}
			\left\{ \begin{array}{l}
				\lap_{g_0,p} w = 0\ \ \text{in }B(x,r),\\
				w = v\ \ \ \ \ \text{on }\dr B(x,r).
			\end{array}
			\right.
		\end{align*}
		Let $\sigma_0,\gamma$ be as in \Cref{th:reg_pharm_curved_domain} for the metric $g_0$. Let $\delta\in(0,\sigma_0)$. It holds
		\begin{align*}
			\left( \fint_{B(x,\delta r)} \abs{ dv - (dv)_{B(x,\delta r)} }^p_\xi \right)^\frac{1}{p} &\leq \left( \fint_{B(x,\delta r)} \abs{ dw - (dw)_{B(x,\delta r)} }^p_\xi \right)^\frac{1}{p} + 2\left( \fint_{B(x,\delta r)} \abs{ dv - dw}^p_\xi \right)^\frac{1}{p}.
		\end{align*}
		We estimate the first term thanks to \Cref{th:reg_pharm_curved_domain} - \Cref{item:osc_pharm_curved}:
		\begin{align*}
			\left( \fint_{B(x,\delta r)} \abs{ dv - (dv)_{B(x,\delta r)} }^p_\xi \right)^\frac{1}{p} &\leq \kappa \delta^\gamma \left( \fint_{B(x, r)} \abs{ dw - (dw)_{B(x,r)} }^p_\xi \right)^\frac{1}{p} + 2\left( \fint_{B(x,\delta r)} \abs{ dv - dw}^p_\xi \right)^\frac{1}{p}.
		\end{align*}
		From \Cref{lm:initial_comparison} and the fact that $v$ is Lipschitzian, we estimate the second term: 
		\begin{align*}
			\left( \fint_{B(x,\delta r)} \abs{ dv - (dv)_{B(x,\delta r)} }^p_\xi \right)^\frac{1}{p} &\leq \kappa \delta^\gamma \left( \fint_{B(x, r)} \abs{ dw - (dw)_{B(x,\delta r)} }^p_\xi \right)^\frac{1}{p} + \frac{c_1}{\delta^{n/p}} [g]_{C^{0,\beta}(\B)}^\frac{1}{p} \|dv\|_{L^\infty(B(x,r))} r^\frac{\beta}{p},
		\end{align*}
		where the constant $c_1$ depends on $n,N,p,\lambda,\Lambda$. By triangular inequality, we obtain:
		\begin{align*}
			\left( \fint_{B(x,\delta r)} \abs{ dv - (dv)_{B(x,\delta r)} }^p_\xi \right)^\frac{1}{p} \leq &\ \frac{c_1}{\delta^{n/p}} [g]_{C^{0,\beta}(\B)}^\frac{1}{p} \|dv\|_{L^\infty(B(x,\delta r))} r^\frac{\beta}{p} + \kappa \delta^\gamma \left( \fint_{B(x,r)} \abs{ dv - (dv)_{B(x,\delta r)} }^p_\xi \right)^\frac{1}{p} \\
			& + \kappa \delta^\gamma \left( \fint_{B(x,r)} \abs{ dv - dw }^p_\xi \right)^\frac{1}{p}.
		\end{align*}
		Using again \Cref{lm:initial_comparison} and the fact that $v$ is Lipschitz continuous, we estimate the last term:
		\begin{align*}
			\left( \fint_{B(x,\delta r)} \abs{ dv - (dv)_{B(x,\delta r)} }^p_\xi \right)^\frac{1}{p} &\leq c_1 \left( \delta^{-\frac{n}{p}} +1 \right)[g]_{C^{0,\beta}(\B)}^{\frac{1}{p}} \|dv\|_{L^\infty(B(x,r))} r^\frac{\beta}{p} + \kappa \delta^\gamma \left( \fint_{B(x,r)} \abs{ dv - (dv)_{B(x,r)} }^p_{\xi} \right)^\frac{1}{p}.
		\end{align*}
		Choosing $\delta = \delta(n,N,p,\lambda, \Lambda)$ small enough, we obtain:
		\begin{align*}
			\left( \fint_{B(x,\delta r)} \abs{ dv - (dv)_{B(x,\delta r)} }^p_\xi \right)^\frac{1}{p} &\leq c [g]_{C^{0,\beta}(\B)}^{\frac{1}{p}} \|dv\|_{L^\infty(B(x,r))} r^\frac{\beta}{p} +\frac{1}{2} \left( \fint_{B(x,r)} \abs{ dv - (dv)_{B(x,r)} }^p_\xi \right)^\frac{1}{p},
		\end{align*}
		where the constant $c$ depends only on $n,N,p,\lambda,\Lambda$. Thus, there exists $\alpha\in(0,1)$ such that for all ball $B(x,r)\subset B(y,s)$ and $B(y,2s)\subset \B$,
		\begin{align*}
			\left( \fint_{B(x,r)} \abs{ dv - (dv)_{B(x,r)} }^p_\xi\right)^\frac{1}{p} & \leq c [g]_{C^{0,\beta}(\B)}^\frac{1}{p} \|dv\|_{L^\infty(B(y,s))} r^\alpha \\
			&\leq c [g]_{C^{0,\beta}(\B)}^\frac{1}{p} \left( \frac{1}{s^n}\int_{B(y,2s)} |\g v|^p_\xi \right)^\frac{1}{p} r^\alpha,
		\end{align*}
		where the last inequality follows from \Cref{pr:Holder_involves_Lipschitz}. We conclude the proof thanks to \cite[Theorem 3.1]{han2011}.
	\end{proof}

	\subsection{$C^1$-metric}\label{sec:g_C1}
	
	Assuming that the metric is $C^1$, we can now proceed as in \cite{uhlenbeck1977}: we apply the method of finite differences in order to show the existence of higher derivatives.
	
	\begin{theorem}\label{th:existence_hessian_pharm_curved}
		Let $g$ be a $C^1$-metric satisfying \eqref{hyp:g_bounded}. There exists a constant $C>0$ depending on $n,N,p,\lambda,\Lambda$ satisfying the following.\\
		
		Any solution $v$ to \eqref{eq:pharm_system_curved} has second derivatives of $v$ in the space $L^2_{\loc}(\B)$ for the measure $|\g v|^{p-2}_g d\vol_g$. In particular, $|\g v|_g^\frac{p-2}{2}\g v \in W^{1,2}_{\loc}(\B;\R^{n\times N})$ and for any ball $B(x,2r)\subset \B$, it holds
		\begin{align*}
			\int_{B(x,r)} \left| \g^g \left( |\g v|^\frac{p-2}{2}_g \g^g v\right) \right|^2_g\, d\vol_g \leq C\left( \frac{1}{r^2}+ \|\g g\|_{L^\infty(B(x_0,2r))}^2\right) \int_{B(x,2r)} |\g v|^p_g\, d\vol_g.
		\end{align*}
		As well, it holds $|\g v|^{p-2}_g\g v \in W^{1,\frac{p}{p-1}}_{\loc}(\B;\R^{n\times N})$ and for any ball $B(x,2r)\subset B$, it holds
		\begin{align*}
			\int_{B(x,r)} \abs{ \g^g (|\g v|^{p-2}_g \g^g v) }_g^\frac{p}{p-1}\, d\vol_g \leq C\left( \frac{1}{r^\frac{p}{p-1}} + \|\g g\|_{L^\infty(B(x_0,2r))}^{\frac{p}{p-1}} \right) \int_{B(x,2r)} |\g v|^p_g\, d\vol_g.
		\end{align*}
		Furthermore, $C$ is bounded from above as long as $p$ is bounded away from $+\infty$.
	\end{theorem}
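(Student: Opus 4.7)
The plan is to adapt Uhlenbeck's method of finite differences to the curved setting, with careful bookkeeping of the additional terms produced by the variation of $g$. I would fix a ball $B(x_0,2r)\subset \B$, a cutoff $\eta\in C^\infty_c(B(x_0,2r))$ with $\eta\equiv 1$ on $B(x_0,r)$ and $|\g\eta|\aleq 1/r$, a unit vector $e\in\s^{n-1}$, and a small $h\in(0,r/4)$. Writing $\tau_h f(x) := f(x+he)-f(x)$, I would test the weak formulation \eqref{eq:weak_formulation} with $\varphi_h := \tau_{-h}(\eta^2\tau_h v)$, which, after the change of variable $y = x+he$ in one of the resulting integrals, leads to the identity
\begin{align*}
	\int_\B \tau_h\!\left[ |\g v|^{p-2}_g\, g^{\alpha\beta}\, \dr_\alpha v\, \sqrt{\det g}\,\right]\!(x)\, \dr_\beta\bigl( \eta^2 \tau_h v\bigr)(x)\, dx = 0.
\end{align*}

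The next step is to decompose the discrete flux increment by freezing the metric at $x$:
\begin{align*}
	\tau_h\!\left[|\g v|^{p-2}_g g^{-1}\g v\,\sqrt{\det g}\right] = \sqrt{\det g(x)}\; \tau_h\!\left[|\g v|^{p-2}_{g(x)} g(x)^{-1}\g v\right] + E_h(x),
\end{align*}
where the frozen-coefficient piece recovers the monotonicity structure of the standard constant-coefficient $p$-Laplacian, while $E_h$ collects all contributions containing $\tau_h g^{\alpha\beta}$ or $\tau_h\sqrt{\det g}$, so that $|E_h|\aleq \|\g g\|_{L^\infty}\, h\, |\g v|^{p-1}$ pointwise. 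On the frozen-coefficient piece, the classical monotonicity
\begin{align*}
	\left\langle |a|^{p-2}_{g_0}\,g_0^{-1} a - |b|^{p-2}_{g_0}\,g_0^{-1} b,\; a-b\right\rangle \geq c_p\,\bigl(|a|_{g_0}+|b|_{g_0}\bigr)^{p-2}\,|a-b|^2_{g_0},
\end{align*}
applied at each $x$ with $g_0 = g(x)$, $a = \g v(x+he)$, $b = \g v(x)$, combined with the splitting $\dr_\beta(\eta^2\tau_h v) = \eta^2\,\tau_h \dr_\beta v + 2\eta\,\dr_\beta\eta\,\tau_h v$, delivers the coercive quantity
\begin{align*}
	\int_\B \eta^2\,\bigl(|\g v|_g + |\g v(\cdot+he)|_g\bigr)^{p-2}\,|\tau_h \g v|^2\,d\vol_g,
\end{align*}
which is comparable to $\int\eta^2\,|\tau_h(|\g v|^{(p-2)/2}_g\g v)|^2\,d\vol_g$.

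All remaining contributions — from $\g\eta$, from $E_h$, and from their cross interactions — would be handled by Young's inequality, splitting each term into a quadratic piece absorbable into the coercive integral and a remainder controlled by $(r^{-2}+\|\g g\|_{L^\infty}^2)\int_{B(x_0,2r)}|\g v|^p_g\,d\vol_g$. Dividing by $h^2$, letting $h\to 0$ via the standard difference-quotient characterization of $W^{1,2}_{\loc}$, and summing over $e$ in an orthonormal basis yields the first claimed estimate. The $W^{1,p/(p-1)}$-bound for $|\g v|^{p-2}_g\g v$ is then deduced from the $W^{1,2}$ one via the pointwise inequality $|\g(|\xi|^{p-2}\xi)|\aleq_p |\xi|^{(p-2)/2}\,|\g(|\xi|^{(p-2)/2}\xi)|$ and Hölder's inequality with exponents $\bigl(\tfrac{2(p-1)}{p},\,\tfrac{2(p-1)}{p-2}\bigr)$, which reabsorbs a factor $\int|\g v|^p_g\,d\vol_g$. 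The most delicate point is the bookkeeping of $E_h$: although each term carries a factor $h$ that disappears after division by $h^2$, one must carefully separate those pieces pairing with $\tau_h \g v$ (which are absorbable into the coercive term via Young) from those pairing with $\tau_h v / h \sim \g v$ (which only enter the remainder), a separation for which the asymmetric weights in the frozen-coefficient monotonicity inequality are essential.
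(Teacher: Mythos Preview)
Your proposal is correct and follows essentially the same finite-difference strategy as the paper: test with $\tau_{-h}(\eta^2\tau_h v)$, extract a coercive term from the monotonicity of the $p$-Laplacian, control the metric-variation errors by $\|\g g\|_{L^\infty}$, absorb via Young, pass to the limit, and deduce the $W^{1,p/(p-1)}$ bound by H\"older. The only cosmetic difference is that the paper writes the flux increment as $\int_0^1\frac{d}{dt}\bigl(g_t^{\alpha\beta}\sqrt{\det g_t}\,|\g v_t|^{p-2}_{g_t}\dr_\beta v_t\bigr)\,dt$ with $v_t=(1-t)v+tv(\cdot+he)$ and $g_t$ similarly interpolated, whereas you freeze $g$ at $x$ and invoke the algebraic monotonicity inequality directly---two equivalent bookkeepings of the same computation.
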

	
	\begin{proof}[Proof]
		Let $B(x_0,r)\subset \B$. Given $i\in\inter{1}{n}$ and $h>0$, we denote $D_{i,h} v : x \mapsto  \frac{1}{h} \left( v(x+he_i) - v(x) \right)$. Let $\eta \in C^\infty_c(B(x_0,\frac{3}{4}r);[0,1])$ such that $\eta = 1$ in $B(x_0,\frac{r}{2})$ and $r|\g \eta|\leq C(n)$. We consider $D_{i,h}^*\left[ \eta^2 D_{i,h} v \right]$ as test function, where $D_{i,h}^*$ is the adjoint operator of $D_{i,h}$. It holds
		\begin{align}
			0 = & \int_{B\left(x_0,\frac{3r}{4} \right)} \scal{\dr_{\alpha} \left[ \eta^2 D_{i,h} v\right]}{ D_{i,h} \left[g^{\alpha\beta} \sqrt{\det g}\, |\g v|^{p-2}_g \dr_{\beta} v\right]}\, dx \nonumber\\
			\geq & \int_{B \left(x_0, \frac{3r}{4} \right)} \eta^2 \scal{D_{i,h} (\dr_{\alpha} v) }{ D_{i,h} \left(g^{\alpha\beta} \sqrt{\det g}\,  |\g v|^{p-2}_g \dr_{\beta} v\right) } dx \nonumber\\
			&  - 2\|\g \eta\|_{L^\infty} \int_{B \left(x_0,\frac{3r}{4}  \right)} |D_{i,h} v|_\xi |\eta|\, \left|D_{i,h} \left( \sqrt{\det g}\, |\g v|^{p-2}_g \g^g v \right) \right|_\xi\, dx. \label{eq:finite_diff}
		\end{align}
		We define $v_t = v + ht D_{i,h} v = (1-t) v + t v(\cdot +he_i)$, for $t\in[0,1]$. Then, it holds
		\begin{align*}
			\scal{D_{i,h} (\dr_{\alpha} v) }{ D_{i,h} \left(g^{\alpha\beta}\sqrt{\det g}\, |\g v|^{p-2}_g \dr_{\beta} v\right) } = & \frac{1}{h} \scal{D_{i,h} (\dr_{\alpha} v) }{ g^{\alpha\beta}_1 \sqrt{\det g_1}|\g v_1|^{p-2}_{g_1} \dr_{\beta} v_1 - g_0^{\alpha\beta} \sqrt{\det g_0}|\g v_0|^{p-2}_{g_0} \dr_{\beta} v_0 }\\
			= & \frac{1}{h}\int_0^1 \scal{ D_{i,h}(\dr_{\alpha} v) }{ \frac{d}{dt} \left( g^{\alpha\beta}_t \sqrt{\det g_t }\, |\g v_t|^{p-2}_{g_t} \dr_{\beta} v_t \right) } dt.
		\end{align*}
		We have 
		\begin{align*}
			\frac{d}{dt}\left( |\g v_{t}|_{g_t}^{p-2} \right) &= \frac{d}{dt} \left[ \left( g^{\alpha\beta}_t \scal{\dr_{\alpha} v_{t} }{ \dr_{\beta} v_{t} } \right)^{\frac{p-2}{2}} \right] \\
			&= \frac{p-2}{2}h \left(2|\g v_{t}|^{p-4}_{g_t} \scal{D_{i,h}(\g v) }{ \g v_{t} }_{g_t} + |\g v_{t}|^{p-4}_{g_t} g_t^{\alpha a}\, D_{i,h} (g_{ab})\, g_t^{b\beta} \scal{\dr_\alpha v_t }{\dr_\beta v_t} \right).
		\end{align*}
		Hence, it holds
		\begin{align*}
			& \scal{D_{i,h} (\dr_{\alpha} v) }{ D_{i,h} \left(g^{\alpha\beta} \sqrt{\det g }\, |\g v|^{p-2}_{g} \dr_{\beta} v\right) }\\
			=& \int_0^1 \scal{ D_{i,h}(\dr_{\alpha} v) }{ g^{\alpha\beta}_t \sqrt{\det g_t }\, |\g v_{t}|^{p-2}_{g_t} D_{i,h} (\dr_{\beta} v) +(p-2) |\g v_{t}|^{p-4}_{g_t} \scal{D_{i,h}(\g v) }{ \g v_{t} }_{g_t} g^{\alpha\beta}_t \sqrt{\det g_t}\, \dr_{\beta}v_{t} }\ dt \\
			& + \int_0^1 \scal{ D_{i,h}(\dr_{\alpha} v) }{ \frac{p-2}{2}|\g v_{t}|^{p-4}_{g_t} g_t^{\alpha a}\, D_{i,h} (g_{ab})\, g_t^{b\beta} \scal{\dr_\alpha v_t }{\dr_\beta v_t} g^{\alpha\beta}_t \sqrt{\det g_t}\, \dr_{\beta}v_{t} }\ dt\\
			& + \int_0^1 \scal{ D_{i,h}(\dr_{\alpha} v) }{ \left( g_t^{\alpha a}\, D_{i,h}(g_{ab})\, g^{b\beta}_t\, \sqrt{\det g_t} + g_t^{\alpha\beta} \tr_{g_t}\left(g_t^{-1}\, D_{i,h}(g_t) \right) \right) |\g v_{t}|^{p-2}_{g_t} \dr_\beta v_t }\ dt\\
			\geq & \int_0^1 (p-2)|\g v_t|^{p-4}_{g_t} \scal{\g v_t}{D_{i,h} (\g v)}_{g_t}^2\sqrt{\det g_t} + |\g v_t|^{p-2}_{g_t} |D_{i,h}(\g v)|^2_{g_t} \sqrt{\det g_t} \ dt \\
			& - C\|\g g\|_{L^\infty(B(x_0,r))}\int_0^1 |D_{i,h}(\g v)|_{g_t} |\g v_t|_{g_t}^{p-1} \sqrt{\det g_t}\ dt,
		\end{align*}
		where the constant $C$ in the last line depends only on $n,N,p,\lambda,\Lambda$. Thus, we deduce that 
		\begin{align}
			& \scal{D_{i,h} (\dr_{\alpha} v) }{ D_{i,h} \left(g^{\alpha\beta} \sqrt{\det g }\, |\g v|^{p-2}_{g} \dr_{\beta} v\right) } \nonumber\\
			\geq & \int_0^1 \left( |\g v_t|^{p-2}_{g} |D_{i,h}(\g v)|^2_{g_t} - C\|\g g\|_{L^\infty(B(x_0,r))} |D_{i,h}(\g v)|_{g_t} |\g v_t|_{g_t}^{p-1}\right) \sqrt{\det g_t}\ dt. \label{eq:main_finite_diff}
		\end{align}
		We now focus on the second term of \eqref{eq:finite_diff}. It holds
		\begin{eqnarray}
			\left| D_{i,h} \left[ \sqrt{\det g}\, |\g v|^{p-2}_g \g^g v \right] \right|_g & = & \left| \int_0^1 \frac{d}{dt}\left( \sqrt{\det g_t}\, |\g v_{t}|^{p-2}_{g_t} \g^{g_t} v_{t} \right) \ dt \right|_g  \nonumber \\
			& \leq & \int_0^1 \left( (p-1)|\g v_t|^{p-2}_{g_\lambda} |D_{i,h}(\g v)|_{g} + C\|\g g\|_{L^\infty(B(x_0,r))} |\g v_t|^{p-2}_{g_t} |\g^{g_t} v_t|_g \right)\, \sqrt{\det g_t}\, dt \nonumber\\
			& \leq & C \int_0^1 \left( |\g v_t|^{p-2}_{g_t} |D_{i,h}(\g v)|_{g_t} + \|\g g\|_{L^\infty(B(x_0,r))} |\g v_t|^{p-1}_{g_t} \right)\, \sqrt{\det g_t}\, dt . \label{eq:lower_finite_diff}
		\end{eqnarray}
		Using \eqref{eq:main_finite_diff}-\eqref{eq:lower_finite_diff} into \eqref{eq:finite_diff}, we obtain
		\begin{align*}
			& \int_{B \left(x_0, \frac{3r}{4} \right)} \int_0^1 \eta^2 |\g v_\lambda|^{p-2}_{g_t}  |D_{i,h} (\g v)|^2_{g_t}\, \sqrt{\det g_t}\ dt\, dx \\
			\leq &\ \frac{C}{r} \int_{B \left(x_0, \frac{3r}{4} \right) } \int_0^1 \eta |D_{i,h} v| \left( \int_0^1 |\g v_t|^{p-2}_{g_t}|D_{i,h} (\g v)|_{g_t}\, \sqrt{\det g_t}\ dt\right) \ dx \\
			& + C\|\g g\|_{L^\infty(B(x_0,r))} \int_{B\left(x_0, \frac{3r}{4} \right) } \int_0^1 \eta \left( |D_{i,h} (\g v)|_{g_t} |\g v_{t}|^{p-1}_{g_t} + |D_{i,h} v| |\g v_t|^{p-1}_{g_t} \right)\, \sqrt{\det g_t}\ dt\, dx.
		\end{align*}
		By Hölder inequality, we deduce the following:
		\begin{align*}
			& \int_{B\left(x_0, \frac{3r}{4} \right)} \int_0^1 \eta^2 |\g v_t|^{p-2}_{g_t}  |D_{i,h} (\g v)|^2_{g_t}\, \sqrt{\det g_t}\ dt\, dx \\
			\leq &\ \frac{C}{r^2} \int_{B\left(x_0, \frac{3r}{4} \right) } \eta^2 |D_{i,h} v|^2 \left( \int_0^1 |\g v_t|^{p-2}_g dt \right)\ d\vol_g \\
			& + C \int_{B\left(x_0, \frac{3r}{4} \right) } \int_0^1 \eta^2 \left( \|\g g\|_{L^\infty(B(x_0,r))}^2 |\g v_{t}|^p_{g_t} + \|\g g\|_{L^\infty(B(x_0,r))} |D_{i,h} v| |\g v_t|^{p-1}_{g_t} \right)\, \sqrt{\det g_t}\ dt\, dx.
		\end{align*}
		By taking the limit $h\to 0$ and summing over $i\in\inter{1}{n}$, we obtain
		\begin{align*}
			\int_{B\left(x_0, \frac{r}{2} \right)} |\g v|^{p-2}_g |\g^2 v|^2_g\, d\vol_g & \leq C\left( \frac{1}{r^2}+\|\g g\|_{L^\infty(B(x_0,r))} + \|\g g\|_{L^\infty(B(x_0,r))}^2 \right) \int_{B(x_0,r)} |\g v|^p_g\, d\vol_g \\
			&\leq C\left( \frac{1}{r^2}+ \|\g g\|_{L^\infty(B(x_0,r))}^2 \right) \int_{B(x_0,r)} |\g v|^p_g\, d\vol_g.
		\end{align*}
		We conclude the proof thanks to two inequalities. On one hand, it holds 
		\begin{align*}
			\left|\g^g \left(|\g v|^\frac{p-2}{2}_g \g^g v\right) \right|_g \leq \frac{p}{2}|\g v|^\frac{p-2}{2}_g |\g^2 v|_g.
		\end{align*}
		On the other hand, by Hölder inequality, it holds
		\begin{align*}
			& \int_{B\left( x_0,\frac{r}{2} \right)} \abs{ \g\left( |\g v|^{p-2}_g\g v \right) }^\frac{p}{p-1}_g\, d\vol_g \\
			\leq &\ p^{\frac{p}{p-1}}\int_{B\left( x_0,\frac{r}{2} \right)} |\g v|^\frac{p(p-2)}{p-1}_g |\g^2 v|^\frac{p}{p-1}_g\, d\vol_g\\
			\leq &\ p^{\frac{p}{p-1}} \int_{B\left(x_0,\frac{r}{2} \right)} \left( |\g v|_g^\frac{p-2}{2} |\g^2 v|_g \right)^\frac{p}{p-1} |\g v|_g^\frac{p(p-2)}{2(p-1)}\, d\vol_g \\
			\leq &\ p^{\frac{p}{p-1}} \left( \int_{B\left( x_0,\frac{r}{2} \right)} |\g v|^{p-2}_g |\g^2 v|^2_g\, d\vol_g \right)^\frac{p}{2(p-1)} \left( \int_{B\left( x_0,\frac{r}{2} \right)} |\g v|^p_g\, d\vol_g \right)^\frac{p-2}{2(p-1)} \\
			\leq &\ C\left( \frac{1}{ r^{\frac{p}{p-1}} } + \|\g g\|_{L^\infty(B(x_0,r))}^{\frac{p}{p-1}} \right)\left(  \int_{B(x_0,r)} |\g v|^p_g\, d\vol_g \right)^\frac{p}{2(p-1)} \left( \int_{B(x_0,r)} |\g v|^p_g\, d\vol_g \right)^\frac{p-2}{2(p-1)} \\
			\leq &\ C\left( \frac{1}{ r^{\frac{p}{p-1}} }+ \|\g g\|_{L^\infty(B(x_0,r))}^{\frac{p}{p-1}} \right) \int_{B(x_0,r)} |\g v|^p_g\, d\vol_g.
		\end{align*}
	\end{proof}

	\subsection{$L^\infty$-bound up to the boundary}\label{sec:Boundary_Linfty}
	
	In order to study the regularity of systems of the form $|\lap_{g,p}u|\aleq |\g u|^p$, we will need $L^\infty$-bounds of solutions to $\lap_{g,p}v=0$ up to the boundary. 
	
	\begin{proposition}\label{pr:boundary_Linfty}
		Consider $\Omega\subset \R^n$ a Lipschitz open set and $g$ an $L^\infty$-metric on $\Omega$ satisfying \eqref{hyp:g_bounded}. Let $\vp \in C^0(\dr \Omega;\R^N)$, $p>2$ and define $v \in W^{1,p}(\Omega;\R^N)$ as the solution to 
		\begin{align}\label{eq:syst_pextention}
			\left\{ \begin{array}{l}
				\lap_{g,p} v = 0 \ \text{ in }\Omega,\\
				v = \vp \ \ \text{ on } \dr \Omega.
			\end{array}
			\right.
		\end{align}
		Then, the image of $v$ lies into the convex envelope of the image of $\vp$. Consequently, it holds
		\begin{align}\label{eq:Linfty}
			\|v\|_{L^\infty(\Omega)} \leq \|\vp \|_{L^\infty(\dr \Omega)}.
		\end{align}
	\end{proposition}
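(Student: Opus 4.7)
The plan is to show that $v$ takes values in the closed convex hull $K \subset \R^N$ of $\vp(\dr\Omega)$; the bound \eqref{eq:Linfty} follows immediately, since the closed Euclidean ball of radius $\|\vp\|_{L^\infty(\dr\Omega)}$ is closed, convex, and contains $\vp(\dr\Omega)$, hence contains $K$.

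To prove $v(x)\in K$ a.e., I would compare $v$ with the map $w := \pi\circ v$, where $\pi : \R^N\to K$ is the nearest-point projection onto $K$. Convexity of $K$ makes $\pi$ well-defined and $1$-Lipschitz; since $\vp(\dr\Omega)\subset K$ we have $\pi\circ\vp=\vp$, so $w$ has trace $\vp$ on $\dr\Omega$ and $v-w\in W^{1,p}_0(\Omega;\R^N)$. By the standard chain rule for a Lipschitz map composed with a Sobolev map, $\dr_\alpha w = d\pi(v)\,\dr_\alpha v$ a.e., where $d\pi(y)$ exists for a.e.\ $y\in\R^N$ (Rademacher) and is the orthogonal projection $P$ of $\R^N$ onto the tangent space to $K$ at $\pi(y)$; in particular $P^\top = P$ and $P^2 = P$. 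The key pointwise inequality then reads
\begin{align*}
|\g v|_g^2-|\g w|_g^2 = g^{\alpha\beta}\scal{(\id-P)\dr_\alpha v}{\dr_\beta v} = g^{\alpha\beta}\scal{Q\dr_\alpha v}{Q\dr_\beta v} \geq 0,
\end{align*}
where $Q:=\id-P$ is also an orthogonal projection and the final inequality uses the positive definiteness of $(g^{\alpha\beta})$. Integrating yields $E_g(w)\leq E_g(v)$.

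To conclude, I would exploit the strict convexity of the integrand $\xi\mapsto(g^{\alpha\beta}(x)\scal{\xi_\alpha}{\xi_\beta})^{p/2}$ on $\R^{n\times N}$ for $p\geq 2$, which follows from it being the $(p/2)$-th power of a positive-definite quadratic form. Thus $E_g$ is strictly convex on the affine space $\vp+W^{1,p}_0(\Omega;\R^N)$ and admits a unique minimizer. The weak formulation \eqref{eq:weak_formulation} (extended to $W^{1,p}_0$ test functions by density), combined with the convex-functional inequality $E_g(\tilde w)\geq E_g(v)+DE_g(v)(\tilde w-v)=E_g(v)$ for every admissible $\tilde w$, identifies $v$ with this unique minimizer. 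Since $w$ is admissible and satisfies $E_g(w)\leq E_g(v)$, uniqueness forces $w=v$ a.e., i.e.\ $v(x)\in K$ for almost every $x\in\Omega$.

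The main technical step requiring care is the chain rule $\dr_\alpha(\pi\circ v)=d\pi(v)\,\dr_\alpha v$ for the merely Lipschitz $\pi$ composed with the Sobolev map $v$; this is classical (cf.\ Marcus-Mizel), and may alternatively be verified by approximating $\pi$ by its Moreau-Yosida regularization (smooth and $1$-Lipschitz) and passing to the limit. The remainder of the proof consists of routine manipulations of the weak formulation together with the convexity of $E_g$.
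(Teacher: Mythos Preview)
Your variational approach is genuinely different from the paper's. The paper works half-space by half-space: for each affine functional $\langle\cdot,Z\rangle-\alpha$ that is nonnegative on $\vp(\dr\Omega)$, it tests the equation directly with $f=(\langle v,Z\rangle-\alpha)_-\in W^{1,p}_0$, obtaining $\int_\Omega |\g v|_g^{p-2}|\g f|_g^2\,d\vol_g=0$, and then uses $|\g f|_g^p\le |Z|^{p-2}|\g v|_g^{p-2}|\g f|_g^2$ to conclude $f\equiv 0$. This is a two-line PDE argument and avoids invoking minimality or uniqueness. Your route instead passes through the variational characterisation of $v$ and strict convexity of $E_g$; it is more conceptual but uses more machinery.

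There is one genuine error in your argument: the differential $d\pi(y)$ of the nearest-point projection onto a general closed convex $K$ is \emph{not} an orthogonal projection. For $K$ the unit ball and $|y|>1$ one has $d\pi(y)=|y|^{-1}(I-\hat y\otimes\hat y)$, which satisfies neither $P^2=P$ nor $P^\top=P$ unless $|y|=1$. Your displayed identity $|\g v|_g^2-|\g w|_g^2=g^{\alpha\beta}\langle Q\dr_\alpha v,Q\dr_\beta v\rangle$ is therefore incorrect. The conclusion $|\g w|_g\le |\g v|_g$ is nevertheless true: write $A=d\pi(v)$; since $\pi$ is $1$-Lipschitz one has $A^\top A\le I$, and with $S:=(I-A^\top A)^{1/2}$,
\[
|\g v|_g^2-|\g w|_g^2=g^{\alpha\beta}\big\langle (I-A^\top A)\dr_\alpha v,\dr_\beta v\big\rangle=g^{\alpha\beta}\langle S\dr_\alpha v,S\dr_\beta v\rangle\ge 0
\]
by positive definiteness of $(g^{\alpha\beta})$. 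With this correction your proof goes through. Note incidentally that for a \emph{half-space} the projection is piecewise affine and $d\pi$ \emph{is} an orthogonal projection, so your computation is literally correct in that case; combined with the half-space description of $K$ this gives a hybrid argument closer to the paper's, and also sidesteps the Marcus--Mizel chain-rule subtlety you flag, since projection onto a half-space is globally $C^{0,1}$ with $d\pi$ defined everywhere.
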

	
	The estimate \eqref{eq:Linfty} can be found in \cite[Lemma A.1]{mazowiecka2020} when $g$ is the flat metric. The authors proved \eqref{eq:Linfty} by an approximation argument. Here, we provide a direct proof combined with a geometric view-point.
	
	\begin{proof}[Proof]
		The convex envelope of the image of $\vp$ is the intersection of all the half-spaces $H$ such that $\mathrm{Im}(\vp)$ lies into $H$. Such half-spaces are characterised by a vector $Z \in\R^N$ and a constant $\alpha\in\R$ such that 
		\begin{align*}
			\forall x\in \dr \Omega,\ \ \ \scal{\vp(x)}{Z} \geq \alpha.
		\end{align*}
		It holds $\di_g\left( |\g v|^{p-2}_g \g^g(\scal{ v}{Z}-\alpha) \right) = 0$ on $\Omega$, with $\scal{v}{Z} -\alpha \geq 0$ on $\dr \Omega$. Thus the negative part $f := \left( \scal{v}{Z}-\alpha \right)_-$ of the function $\scal{v}{Z}-\alpha$ is an admissible test function. It holds:
		\begin{align*}
			\int_{\Omega} |\g v|^{p-2}_g |\g f|^2_g\, d\vol_g = \int_{\Omega} |\g v|^{p-2}_g \g^g (\scal{ v}{Z}-\alpha) \cdot \g^g f\, d\vol_g = 0.
		\end{align*}
		Hence, we obtain $|\g v|^{p-2}_g |\g f|^2_g = 0$ everywhere in $\Omega$. Furthermore, it holds
		\begin{align*}
			|\g f|^p_g \leq |Z|^{p-2}|\g v|^{p-2}_g |\g f|^2_g.
		\end{align*}
		Hence, $f$ is constant in $\Omega$, with $0$ as boundary values. We conclude that $\scal{v}{Z}\geq \alpha$ in $\Omega$.
	\end{proof}
	
	\section{An application: systems of the form $|\lap_{g,p} u|\aleq |\g u|^p$}\label{sec:Critical_system}
	
	Let $g$ be a $C^0$-metric on $\overline{\B}$ satisfying \eqref{hyp:g_bounded}. In this section, we study the regularity of solutions $u\in W^{1,n}\cap C^0(\B;\R^N)$ to systems of the form
	\begin{align}\label{eq:system_complex}
		\lap_{g,p} u = f(u,\g u),
	\end{align}
	where $f : \R^N \times \R^{n\times N}\to \R^N$ is a smooth map satisfying, for some given $\Gamma >0$,
	\begin{align}\label{hyp:LHS}
		\forall (x,X)\in \R^N \times \R^{n\times N},\ \ \ \ |f(x,X)|\leq \Gamma |X|^p_g.
	\end{align}
	The weak formulation is the following:
	\begin{align*}
		\forall \vp \in C^\infty_c(\B;\R^N),\ \ \ \int_{\B} \scal{|\g u|^{p-2}_g \g^g u}{\g^g \vp}_g\, d\vol_g = \int_{\B} f(u,\g u)\cdot \vp\, d\vol_g.
	\end{align*}
	The goal of this section is to prove the following result. 
	
	\begin{theorem}\label{th:reg_nLap_critical}
		Let $p\in[2,+\infty)$, $\beta_1,\beta_2\in (0,1)$ and $g$ be a $L^{\infty}$-metric on $\overline{\B}$ satisfying \eqref{hyp:g_bounded}. There exists $\alpha\in(0,1)$ and $\kappa>0$ depending only on $n,N,\lambda,\Lambda,p,\beta_1,\beta_2$ such that the following holds. Consider any map $u\in W^{1,p}\cap C^0(\B;\R^N)$ solution to \eqref{eq:system_complex}-\eqref{hyp:LHS}. Then $u$ satisfies the following properties:
		\begin{enumerate}
			\item\label{item:Holder_complicated} In the case $p=n$, it holds $u\in C^{0,\alpha}_{\loc}(\B;\R^N)$ together with the following estimate: 
			\begin{align*}
				\forall B(x,2r)\subset \B,\ \ \ [u]_{C^{0,\alpha}(B(x,r))}  \leq \frac{\kappa}{r^{\alpha}}  \|\g u\|_{L^n(B(x,2r))}.
			\end{align*}
			\item\label{item:C1_complicated} In the case $p\neq n$, if $g$ is a $C^{0,\beta_1}$-metric and $u\in C^{0,\beta_2}(\B)$, then $u\in C^{1,\alpha}_{\loc}(\B;\R^N)$. Furthermore, for any ball $B(x,2r)\subset \B$, it holds
			\begin{align*}
				\|\g u\|_{C^{0,\alpha}(B(x,r))} \leq \frac{\kappa}{r^{\frac{n}{p}+\alpha}} \|\g u\|_{L^p(B(x,2r))}.
			\end{align*}
		\end{enumerate}
	\end{theorem}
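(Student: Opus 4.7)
The approach is to compare, on every ball $B(x,r)\subset\B$, the map $u$ with the solution $w\in W^{1,n}(B(x,r);\R^N)$ of $\lap_{g,n}w=0$ in $B(x,r)$ with $w=u$ on $\dr B(x,r)$, and to exploit the regularity of $w$ provided by \Cref{th:reg_pharm_curved_domain}. Set $\omega(r):=\osc_{B(x,r)} u$ and $E(r):=\int_{B(x,r)}|\g u|^n_g\,d\vol_g$. The first key ingredient would be the comparison estimate
\begin{align*}
\int_{B(x,r)} |\g(u-w)|^n_g\,d\vol_g \leq C\,\omega(r)\,E(r).
\end{align*}
To derive it, note that by \Cref{pr:boundary_Linfty} applied to $w$ the image of $w$ lies in the convex hull of $u(\dr B(x,r))$, so $\|u-w\|_{L^\infty(B(x,r))}\le 2\omega(r)$. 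Using $u-w$ as test function in the weak formulations of \eqref{eq:system_complex} and of $\lap_{g,n}w=0$, subtracting, and invoking the standard monotonicity of the $n$-Laplace operator on the left-hand side together with the growth assumption \eqref{hyp:LHS} on the right-hand side, one concludes.

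For \Cref{item:Holder_complicated}, the plan is to deduce a quantitative energy decay from this comparison. A Caccioppoli inequality combined with the Hölder estimate from \Cref{item:Linfty_pharm_curved} of \Cref{th:reg_pharm_curved_domain} applied to $w$ would give, for $\theta\in(0,1/4)$,
\begin{align*}
\int_{B(x,\theta r)}|\g w|^n_g\,d\vol_g\leq C\theta^{n\gamma}E(r).
\end{align*}
Splitting $E(\theta r)$ into the $(u-w)$-part and the $w$-part then yields $E(\theta r)\le C(\omega(r)+\theta^{n\gamma})E(r)$. Since $u$ is continuous, $\omega(r)\to 0$ as $r\to 0$: one first fixes $\theta$ so that $C\theta^{n\gamma}\le 1/4$, then chooses $r_0$ so that $C\omega(r_0)\le 1/4$, forcing $E(\theta r)\le \tfrac{1}{2}E(r)$. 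Iterating gives a Morrey decay $E(\rho)\le C(\rho/r_0)^{n\alpha}E(r_0)$ for some $\alpha>0$, and Morrey's Dirichlet growth theorem then yields $u\in C^{0,\alpha}_{\loc}(\B)$ with the claimed estimate.

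For \Cref{item:C1_complicated}, the previous step upgrades the continuity of $u$ to $\omega(r)\le Cr^\alpha$, so the comparison estimate improves to $\int_{B(x,r)}|\g(u-w)|^n_g\, d\vol_g\leq Cr^\alpha E(r)$. Since now $g\in C^{0,\beta}$, one may apply \Cref{item:osc_pharm_curved} of \Cref{th:reg_pharm_curved_domain} to $w$, which produces a Campanato-type decay of $\g w$ on concentric balls. Splitting $\int_{B(x,\delta r)}|\g u-(\g u)_{B(x,\delta r)}|^n_g$ into a $w$-part and a $(u-w)$-part, combining with the improved comparison estimate, and iterating as above, would yield a Campanato decay $\int_{B(x,\rho)}|\g u-(\g u)_{B(x,\rho)}|^n_g\leq C\rho^{n\alpha'}$ for some $\alpha'>0$. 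The Campanato characterization of Hölder spaces then gives $\g u\in C^{0,\alpha'}_{\loc}(\B)$, which is the announced $C^{1,\alpha}$ regularity.

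The main difficulty is the critical scaling $p=n$: since $W^{1,n}\not\hookrightarrow L^\infty$, the $L^\infty$-bound on $u-w$ can only be extracted from the maximum principle of \Cref{pr:boundary_Linfty} combined with the continuity hypothesis on $u$, which is precisely why continuity appears as an assumption in the statement. Balancing the two smallness parameters in $E(\theta r)\le C(\omega(r)+\theta^{n\gamma})E(r)$ is delicate: $\theta$ must be fixed first, depending only on the intrinsic regularity of $w$, and only afterwards may $r$ be restricted so that continuity of $u$ produces the required smallness of $\omega(r)$. The same order of quantifiers governs the bootstrap to $C^{1,\alpha}$.
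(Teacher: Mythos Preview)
Your comparison framework is the right one and matches the paper's strategy for \Cref{item:C1_complicated}, but two points deserve comment.

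\textbf{On \Cref{item:Holder_complicated}.} Your argument via comparison with the $n$-harmonic extension $w$ is valid, but the paper proceeds differently: it uses a direct hole-filling argument, testing the equation with $\vp^n(u-u_0)$ for a cutoff $\vp$ (\Cref{pr:C0_involves_Holder}). This yields the energy decay $E(r/2)\le \frac{C_0+C_0\varepsilon}{1+C_0}E(r)$ without any appeal to \Cref{th:reg_pharm_curved_domain} and requires only $g\in L^\infty$ satisfying \eqref{hyp:g_bounded}. Your route instead needs the $C^{0,\gamma}$ regularity of $w$ from \Cref{item:Linfty_pharm_curved} of \Cref{th:reg_pharm_curved_domain}, which in turn requires the almost-constant hypothesis \eqref{hyp:g_almost_cst}; this is available locally because $g$ is continuous, but you should say so explicitly. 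Both routes work; the paper's is shorter and needs less on $g$.

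\textbf{On \Cref{item:C1_complicated}.} Here there is a genuine gap. The Campanato iteration you propose carries an error term of the form $Cr^{\alpha}\fint_{B(x,r)}|\g u|^n_g$, and this is \emph{not} the quantity you are iterating: it is the full $L^n$-average of $\g u$, not its mean oscillation. To close the iteration you need $\fint_{B(x,r)}|\g u|^n$ to remain uniformly bounded as $r\to 0$, which is exactly the Lipschitz bound $\g u\in L^\infty_{\loc}$. Nothing in your sketch supplies this. The paper resolves it by a preliminary step (\Cref{lm:comparison_step2} and \Cref{pr:Lipschitz_complicated}): a simultaneous induction on the scales $B_i=B(x,\delta^i r)$ that controls both $a_i=\big(\fint_{B_i}|\g u-(\g u)_{B_i}|^n\big)^{1/n}$ and $\big(\fint_{B_i}|\g u|^n\big)^{1/n}$, showing that their sum stays below a fixed $M$. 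Only once $\|\g u\|_{L^\infty}$ is available does the Campanato iteration close and give $\g u\in C^{0,\alpha}$ (\Cref{pr:C1_complicated}). Your phrase ``iterating as above'' hides precisely this coupling, which is the heart of the argument.
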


	We first prove that such a solution $u$ is locally Hölder-continuous (\Cref{item:Holder_complicated}) in \Cref{pr:C0_involves_Holder}. Then, we prove that $u$ is locally Lipschitz continuous in \Cref{pr:Lipschitz_complicated}. Finally, we show that $\g u$ is locally Hölder-continuous (\Cref{item:C1_complicated}) in \Cref{pr:C1_complicated}.\\

	\subsection{Hölder-continuity in the case $p=n$}
	
	First, we start by proving Hölder-continuity in the case $p=n$, which follows from a standard hole-filling trick.
	
	\begin{proposition}\label{pr:C0_involves_Holder}
		Let $g$ be a metric on $\B$ satisfying \eqref{hyp:g_bounded}. There exists $\kappa>0$, $\ve_0\in(0,1)$ and $\alpha\in(0,1)$ depending on $n,N,\Gamma,\lambda,\Lambda$ such that the following holds. Let $\ve\in(0,\ve_0)$. If $u\in W^{1,n}\cap C^0(\B;\R^N)$ satisfies \eqref{eq:system_complex}-\eqref{hyp:LHS} and $\osc_{\B} u \leq \ve$, then $u\in C^{0,\alpha}(B(0,\frac{1}{2}))$, with the following estimate:
		\begin{align*}
			\forall B(x,r)\subset \B,\ \ \ [u]_{C^{0,\alpha}(B(x,\frac{r}{2}))}  \leq \frac{\kappa}{r^{\alpha}}  \|\g u\|_{L^n(B(x,r))}.
		\end{align*}
	\end{proposition}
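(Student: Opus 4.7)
The plan is to implement a classical hole-filling argument, using the smallness of $\osc_\B u$ to absorb the critical right-hand side. Fix $x_0 \in B(0, 1/2)$ and a radius $R \leq 1/4$, and set $A := B(x_0, R) \setminus B(x_0, R/2)$. Choose a cut-off $\eta \in C^\infty_c(B(x_0, R); [0,1])$ with $\eta \equiv 1$ on $B(x_0, R/2)$ and $|\g \eta| \leq C/R$. Insert the test function $\vp := \eta^n (u - u_A)$, with $u_A := \fint_A u\, dx$, into the weak form of \eqref{eq:system_complex}. The principal left-hand term is $\int \eta^n |\g u|^n_g\, d\vol_g$; the cross-term involving $\g \eta$ can be bounded via Young's inequality by $\tfrac12 \int \eta^n |\g u|^n_g\, d\vol_g + C \int_A |u - u_A|^n |\g \eta|^n\, dx$. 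Since $u_A$ lies in the convex hull of the range of $u$, $\|u - u_A\|_{L^\infty(B_R)} \leq \osc_\B u \leq \eps$, and \eqref{hyp:LHS} yields $|\int f(u,\g u)\cdot\vp\, d\vol_g| \leq \Gamma \eps \int \eta^n |\g u|^n_g\, d\vol_g$. Taking $\eps_0$ small enough depending on $\Gamma$ and absorbing, we reach the Caccioppoli-type estimate
\[
\int_{B(x_0, R/2)} |\g u|^n_g\, d\vol_g \leq \frac{C}{R^n} \int_A |u - u_A|^n\, dx.
\]

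Applying the Poincaré inequality on the annulus $A$ (with a scale-invariant constant depending only on $n$), namely $\int_A |u - u_A|^n\, dx \leq C R^n \int_A |\g u|^n\, dx$, and invoking the uniform equivalence of the Euclidean and $g$-norms granted by \eqref{hyp:g_bounded}, we obtain $\int_{B(x_0, R/2)} |\g u|^n_g\, d\vol_g \leq C \int_A |\g u|^n_g\, d\vol_g$. Writing $\int_A = \int_{B(x_0, R)} - \int_{B(x_0, R/2)}$ and rearranging yields the hole-filled inequality
\[
\int_{B(x_0, R/2)} |\g u|^n_g\, d\vol_g \leq \theta \int_{B(x_0, R)} |\g u|^n_g\, d\vol_g, \qquad \theta := \tfrac{C}{C+1} \in (0,1),
\]
where $\theta$ depends only on $n, N, \Gamma, \lambda, \Lambda$.

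Iterating this inequality along the dyadic sequence of radii $R/2^k$ produces a Morrey-type decay $\int_{B(x_0, r)} |\g u|^n_g\, d\vol_g \leq C (r/R)^\alpha \int_{B(x_0, R)} |\g u|^n_g\, d\vol_g$ with exponent $\alpha := -\log_2 \theta > 0$. Applied with $R = 1/4$, the Morrey--Campanato embedding (Dirichlet growth theorem) then yields $u \in C^{0, \alpha/n}(B(0,1/2))$ together with the quantitative estimate of the proposition (after renaming $\alpha/n$ as $\alpha$). The only genuinely delicate point is choosing the centering constant $u_A$ as the \emph{annular} average rather than the full-ball average, so that the Poincaré step on $A$ feeds back into the hole-filling loop; notably the argument requires no continuity or near-flatness of $g$, only the uniform bounds \eqref{hyp:g_bounded}, since the estimate is purely energetic and does not compare $u$ with any harmonic extension.
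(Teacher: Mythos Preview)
Your proposal is correct and follows essentially the same hole-filling argument as the paper: test with $\eta^n(u-u_A)$ where $u_A$ is the \emph{annular} average, absorb the critical right-hand side using $\osc_\B u\le\ve$, apply Poincaré on the annulus, fill the hole, and iterate to obtain Morrey decay. The only cosmetic difference is the bookkeeping order---you first absorb the $\Gamma\ve$ term and then hole-fill, whereas the paper carries the $\ve$-term through to the final ratio $\tfrac{C_0+C_0\ve}{1+C_0}$ and imposes smallness at the end.
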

	
	\begin{proof}
		By assumptions on the oscillations of $u$, for any $x_0\in \B$, it holds :
		\begin{align*}
			\osc_{B(x_0,1-|x_0|)} u \leq \ve_0.
		\end{align*}
		Let $r\in(0,1-|x_0|)$. Consider $\vp \in C^\infty_c(B(x_0,r);[0,1])$ such that $\vp = 1$ in $B(x_0,\frac{r}{2})$ and $r |\g \vp|\leq C(n)$. We use $\vp^n (u-u_0)$, where $u_0 = (u)_{B(x_0,r)\setminus B(x_0,\frac{r}{2})}$, as test function:
		\begin{align*}
			\int_{B(x_0,r)} \scal{ |\g u|^{n-2}_g\g^g u}{ n\vp^{n-1} (\g^g \vp)(u-u_0) + \vp^n \g^g u}_g\, d\vol_g = \int_{B(x_0,r)} f(u,\g u)\cdot \vp^n (u-u_0)\, d\vol_g.
		\end{align*}
		Thus, it holds
		\begin{align*}
			\int_{B(x_0,r)} \vp |\g u|^n_\xi\, dx & \leq C_0\ve \int_{B(x_0,r)} |\g u|^n_\xi\, dx + C_0\int_{B(x_0,r)\setminus B(x_0,\frac{r}{2})} |\g u|^{n-1}_\xi |u-u_0| \vp^{n-1} |\g \vp|_\xi\, dx,
		\end{align*}
		where the constant $C_0$ depends only on $n,N,\Gamma,\lambda,\Lambda$. Hence, it holds
		\begin{align*}
			\int_{B(x_0,r)} \vp |\g u|^n_\xi\, dx & \leq C_0 \ve \int_{B(x_0,r)} |\g u|^n_\xi\, dx + C_0 \left( \int_{B(x_0,r)} |\g u|^n_\xi \vp^n\, dx \right)^\frac{n-1}{n}\left( \int_{B(x_0,r)\setminus B(x_0,\frac{r}{2})} r^{-n}|u-u_0|^n\, dx \right)^\frac{1}{n}.
		\end{align*}
		Thanks to Young's inequality, we obtain
		\begin{align*}
			\int_{B(x_0,r)} \vp |\g u|^n_\xi\, dx \leq C_0 \ve \int_{B(x_0,r)} |\g u|^n_\xi\, dx + C_0\int_{B(x_0,r)\setminus B(x_0,\frac{r}{2})} r^{-n}|u-u_0|^n\, dx.
		\end{align*}
		By Sobolev inequality, we deduce that
		\begin{align*}
			\int_{B(x_0,\frac{r}{2})} |\g u|^n_\xi\, dx \leq \int_{B(x_0,r)} \vp |\g u|^n_\xi\, dx \leq C_0 \ve \int_{B(x_0,r)} |\g u|^n_\xi\, dx + C_0\int_{B(x_0,r)\setminus B(x_0,\frac{r}{2})} |\g u|^n_\xi\, dx.
		\end{align*}
		Thus, we obtain
		\begin{align*}
			\int_{B(x_0,\frac{r}{2})} |\g u|^n_\xi\, dx \leq \frac{ C_0 + C_0 \ve}{1+ C_0} \int_{B(x_0,r)} |\g u|^n_\xi\, dx.
		\end{align*}
		If $\eps = \eps(n,N,\Gamma,\lambda,\Lambda )>0$ is small enough, then it holds $\frac{ C_0 + C_0 \ve}{1+ C_0}\in(0,1)$. The above inequality is valid for any ball $B(x_0,r)$, for $r\in(0,1-|x_0|)$. By iteration, we obtain the existence of constants $\kappa>0$ and $\alpha\in(0,1)$ such that for any ball $B(x_0,r)\subset B(0,\frac{3}{4})$, if $r<1-|x_0|$, it holds
		\begin{align*}
			\int_{B(x_0,r)} |\g u|^n_\xi\, dx \leq \kappa r^{n\alpha} \int_{\B} |\g u|^n_\xi\, dx.
		\end{align*}
		We conclude the proof thanks to \cite[Theorem 3.1]{han2011}.
	\end{proof}
	
	\subsection{$L^\infty$-bound of the gradient}
	
	We now come back to the general case $p\in[2,+\infty)$. We first compare with the $p$-harmonic extension. 
	
	\begin{lemma}\label{lm:comparison_step1}
		Let $g$ be a metric on $\B$ satisfying the pointwise \eqref{hyp:g_bounded}. Let $\alpha\in(0,1)$ and consider $u\in W^{1,p}\cap C^{0,\beta_2}(\overline{\B};\R^N)$ a solution to \eqref{eq:system_complex}-\eqref{hyp:LHS}. Let $B(x,r)\subset \B$ and $v\in W^{1,p}(B(x,r);\R^N)$ be the solution to 
		\begin{align*}
			\left\{ \begin{array}{l}
				\lap_{g,p} v = 0\ \ \text{in }B(x,r),\\
				v = u\ \ \ \ \ \text{on }\dr B(x,r).
			\end{array}
			\right.
		\end{align*}
		Then, there exists $c_1>0$ depending only on $n,N,\Gamma,p$ such that the following estimate holds:
		\begin{align*}
			\fint_{B(x,r)} \abs{ \g u - \g v }^p_g\, d\vol_g \leq c_1 [u]_{C^{0,\beta_2}(B(x,r))} r^{\beta_2} \fint_{B(x,r)} |\g u|^p_g\, d\vol_g.
		\end{align*}
	\end{lemma}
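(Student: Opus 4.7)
The plan is the classical comparison argument: test the two equations against $u-v$, use the monotonicity of the $n$-Laplacian to produce $|\nabla(u-v)|^n_g$ on the left-hand side, and estimate the right-hand side via the $L^\infty$-bound on $u-v$ coming from the Hölder regularity of $u$.

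More precisely, since $u-v \in W^{1,n}_0(B(x,r);\R^N)$, we may subtract the weak formulations of $\lap_{g,n}u = f(u,\nabla u)$ and $\lap_{g,n}v = 0$ tested against $u-v$. This yields
\begin{align*}
\int_{B(x,r)} \bigl\langle |\nabla u|^{n-2}_g \nabla^g u - |\nabla v|^{n-2}_g \nabla^g v,\, \nabla^g(u-v) \bigr\rangle_g\, d\vol_g = \int_{B(x,r)} f(u,\nabla u)\cdot (u-v)\, d\vol_g.
\end{align*}
Since $n\geq 2$, the standard monotonicity inequality $\langle |a|^{n-2}a - |b|^{n-2}b, a-b\rangle \gtrsim |a-b|^n$ (applied fiberwise in the inner product $g$) gives a constant $c = c(n,\lambda,\Lambda)$ such that the left-hand side is at least $c\int_{B(x,r)}|\nabla(u-v)|^n_g\, d\vol_g$. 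Using \eqref{hyp:LHS} on the right-hand side, we obtain
\begin{align*}
\int_{B(x,r)} |\nabla(u-v)|^n_g\, d\vol_g \leq C\,\Gamma \int_{B(x,r)} |\nabla u|^n_g\, |u-v|\, d\vol_g \leq C\,\Gamma \|u-v\|_{L^\infty(B(x,r))} \int_{B(x,r)} |\nabla u|^n_g\, d\vol_g.
\end{align*}

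It remains to control $\|u-v\|_{L^\infty(B(x,r))}$. By \Cref{pr:boundary_Linfty}, the image of $v$ lies in the convex hull of $u(\partial B(x,r))$. Since $u\in C^{0,\alpha}(\overline{\B})$, the oscillation of $u$ on $\overline{B(x,r)}$ is bounded by $[u]_{C^{0,\alpha}(B(x,r))}(2r)^\alpha$, so the same bound holds for $v$ on $B(x,r)$. Picking any reference point $y_0\in \partial B(x,r)$, both $u$ and $v$ take values within distance $[u]_{C^{0,\alpha}(B(x,r))}(2r)^\alpha$ of $u(y_0)$, hence
\begin{align*}
\|u-v\|_{L^\infty(B(x,r))} \leq 2^{1+\alpha} [u]_{C^{0,\alpha}(B(x,r))}\, r^\alpha.
\end{align*}
Inserting this into the previous estimate and dividing by $|B(x,r)|$ yields the claim, with $c_1$ depending only on $n,N,\Gamma,\lambda,\Lambda$ (absorbing the $2^{1+\alpha}$ into a universal constant).

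The only mildly delicate step is the $L^\infty$-bound on $u-v$, but the convex-hull property from \Cref{pr:boundary_Linfty} together with the Hölder regularity of $u$ handles it cleanly; the monotonicity inequality for the $n$-Laplacian and the absorption via \eqref{hyp:LHS} are completely standard.
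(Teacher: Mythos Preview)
Your proof is correct and follows essentially the same approach as the paper: test the two equations against $u-v$, apply the monotonicity of the $n$-Laplacian, bound the right-hand side using \eqref{hyp:LHS}, and control $\|u-v\|_{L^\infty}$ via \Cref{pr:boundary_Linfty} combined with the H\"older continuity of $u$. The only cosmetic differences are that the paper first establishes the $L^\infty$-bound (using $(u)_{B(x,r)}$ as reference rather than a boundary point) and then runs the comparison, and that the monotonicity constant in fact depends only on $n$ (not on $\lambda,\Lambda$), matching the statement's claimed dependence $c_1=c_1(n,N,\Gamma)$.
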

	
	\begin{proof}
		The map $v-(u)_{B(x,r)}$ is also a $p$-harmonic map with boundary value $u-(u)_{B(x,r)}$. Thanks to Proposition \ref{pr:boundary_Linfty}, we obtain
		\begin{align}\label{eq:osc_pextension}
			\|v - (u)_{B(x,r)} \|_{ L^\infty(B(x,r)) } \leq \|u-(u)_{B(x,r)} \|_{L^\infty(B(x,r))} \leq \osc_{B(x,r)} u\leq [u]_{C^{0,\alpha}(\B)} r^\alpha. 
		\end{align}
		We consider $u-v\in W^{1,p}_0\cap L^\infty(B(x,r);\R^N)$ as test function:
		\begin{align*}
			\int_{B(x,r)} \scal{ |\g u|^{p-2}_g \g^g u - |\g v|^{p-2}_g \g^g v }{\g^g(u-v)}_g\, d\vol_g = \int_{B(x,r)} f(u,\g u)\cdot (u-v)\, d\vol_g.
		\end{align*}
		Thanks to \eqref{hyp:LHS}, we obtain a constant $c>0$ depending only on $p,n,N,\Gamma$ such that the following holds:
		\begin{align*}
			\int_{B(x,r)} |\g u - \g v|^p_g\, d\vol_g \leq c \left( \|v - (u)_{B(x,r)} \|_{ L^\infty(B(x,r)) } + \|u - (u)_{B(x,r)} \|_{ L^\infty(B(x,r)) } \right) \int_{B(x,r)} |\g u|^p_g\, d\vol_g.
		\end{align*}
		The result follows from \eqref{eq:osc_pextension}.
	\end{proof}
	
	This lemma shows that $\g u $ is very close to $\g v$ if $r$ is small. We proceed by approximation. \Cref{lm:comparison_step1} provide the following:
	
	\begin{lemma}\label{lm:comparison_step2}
		Let $g$ be a $C^{0,\beta_1}$-metric on $\B$ satisfying \eqref{hyp:g_bounded}. Consider $B(x,r)\subset \B$. We define the following quantities:
		\begin{align}
			& M := \sup_{y_1,y_2\in\B, z\in\s^{n-1} } \frac{ |z|^p_{g(y_1)} \sqrt{\det g(y_1)} }{ |z|^p_{g(y_2)} \sqrt{\det g(y_2)}  },\\
			& \delta := \min\left( \left( \frac{1}{10^{n} M \kappa \left( 1+[g]_{C^{0,\beta_1}(\B)}^\frac{1}{p}\right)\left( 1+[g]_{C^{0,\beta_1}(\B)}^{2p}\right) } \right)^\frac{1}{\gamma}, \frac{\sigma_0}{2} \right) \leq \frac{1}{4}, \label{def:choice_delta2}\\
			\forall i\in\N, \ \ \ \ & r_i := \delta^i r, \label{def:choice_ri2}\\
			\forall i\in\N, \ \ \ \ & B_i := B(x,r_i). \label{def:choice_Bi2}
		\end{align}
		Here the constants $\kappa, \sigma_0, \gamma$ are those in \Cref{th:reg_pharm_curved_domain}. There exists $c_2>1$ depending only on $n,N,\Gamma,\lambda,\Lambda,p,\beta_1,\beta_2$ such that for any $i\in\N$, it holds
		\begin{align*}
			\left( \fint_{B_{i+2}} \abs{ \g u - (\g u)_{B_{i+2}} }^p_\xi\, dy \right)^\frac{1}{p} \leq \frac{1}{10} \left( \fint_{B_{i+1}} \abs{ \g u - (\g u)_{B_{i+1}} }^p_\xi\, dy \right)^\frac{1}{p} + c_2[u]_{C^{0,\beta_2}(\B)}^\frac{1}{p} \delta^\frac{i\beta_2}{p} \left( \fint_{B_{i+1}} |\g u|^p_\xi\, dy \right)^\frac{1}{p}.
		\end{align*}
	\end{lemma}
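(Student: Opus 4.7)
The plan is to follow the blueprint of \Cref{lm:sequence_comparison}, with \Cref{lm:comparison_step1} playing the role of \Cref{lm:initial_comparison}. For each $i\in\N$, I would introduce the $n$-harmonic extension $v_i \in W^{1,n}(B_i;\R^N)$ solving $\lap_{g,n} v_i = 0$ in $B_i$ with $v_i = u$ on $\dr B_i$. Note that here the approximant $v_i$ is defined with respect to the full (non-frozen) metric $g$, since the error we control now originates in the nonlinearity of the equation rather than in a perturbation of the metric, and this is precisely the form of comparison provided by \Cref{lm:comparison_step1}.

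First, by the triangular inequality and equivalence of $\xi$ and $g$-norms, I decompose the left-hand side as
\begin{align*}
\left( \fint_{B_{i+2}} \abs{\g u - (\g u)_{B_{i+2}}}^n_\xi \right)^{1/n} \leq 2 \left( \fint_{B_{i+2}} \abs{\g u - \g v_{i+1}}^n_\xi \right)^{1/n} + \left( \fint_{B_{i+2}} \abs{\g v_{i+1} - (\g v_{i+1})_{B_{i+2}}}^n_\xi \right)^{1/n}.
\end{align*}
For the first summand, the crude scaling $\fint_{B_{i+2}} \leq \delta^{-n}\fint_{B_{i+1}}$ followed by \Cref{lm:comparison_step1} applied to $v_{i+1}$ on $B_{i+1}$ yields a contribution of the form $C\delta^{-1}[u]_{C^{0,\alpha}(\B)}^{1/n} r_{i+1}^{\alpha/n} \bigl(\fint_{B_{i+1}} |\g u|^n_\xi\bigr)^{1/n}$. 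Since $r_{i+1}^{\alpha/n} \leq \delta^{(i+1)\alpha/n}$, this already has the form of the second term in the target inequality, the residual $\delta^{\alpha/n-1}$ being absorbed into $c_2$.

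For the second summand, I apply \Cref{th:reg_pharm_curved_domain} \Cref{item:osc_pharm_curved} to $v_{i+1}$ on $B_{i+1}$, bound the $L^n$-mean on $B_{i+2}$ by the oscillation, and convert between $\xi$ and $g$-norms to arrive at
\begin{align*}
\left( \fint_{B_{i+2}} \abs{\g v_{i+1} - (\g v_{i+1})_{B_{i+2}}}^n_\xi \right)^{1/n} \leq C\, M^{1/n}\kappa\,(1+[g]_{C^{0,\beta}(\B)}^{1/n})\,\delta^\gamma \left( \fint_{B_{i+1}} \abs{\g v_{i+1} - (\g v_{i+1})_{B_{i+1}}}^n_\xi \right)^{1/n}.
\end{align*}
Re-inserting $\g u$ via the triangular inequality splits the right-hand side into a genuine oscillation term $\bigl(\fint_{B_{i+1}} |\g u - (\g u)_{B_{i+1}}|^n_\xi\bigr)^{1/n}$ plus a comparison term $\bigl(\fint_{B_{i+1}} |\g u - \g v_{i+1}|^n_\xi\bigr)^{1/n}$, which is handled once more by \Cref{lm:comparison_step1} and contributes again to the second term of the target inequality. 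By the choice of $\delta$ in \eqref{def:choice_delta2}, the prefactor $C M^{1/n}\kappa(1+[g]_{C^{0,\beta}(\B)}^{1/n})\delta^\gamma$ falls below $1/10^{1000}$, producing the first term on the right-hand side of the claimed inequality.

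The main obstacle I expect is bookkeeping the various $\delta$-dependent constants arising from rescaling the measure, converting between $\xi$ and $g$-norms, and re-inserting $\g u$ via triangular inequalities, so as to make them all collapse into the single constant $c_2$ without eating into the decay factor $\delta^\gamma$. The explicit construction of $\delta$ in \eqref{def:choice_delta2} is tailored precisely to absorb the geometric constants $M^{1/n}\kappa(1+[g]_{C^{0,\beta}(\B)}^{1/n})$ against $\delta^\gamma$, mirroring the analogous choice made in \Cref{lm:sequence_comparison}.
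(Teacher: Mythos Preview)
Your proposal is correct and follows essentially the same approach as the paper: define the $n$-harmonic comparison maps $v_i$ with respect to the full metric $g$, split via the triangular inequality, use \Cref{th:reg_pharm_curved_domain}\,\Cref{item:osc_pharm_curved} on the oscillation term of $v_{i+1}$, absorb the geometric prefactor into $10^{-1000}$ via the choice of $\delta$, and handle all comparison terms with \Cref{lm:comparison_step1}. The only difference is cosmetic ordering---the paper first applies the oscillation estimate and then rescales the remaining comparison integral from $B_{i+2}$ to $B_{i+1}$, whereas you rescale first---but the terms and constants produced are identical.
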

	\begin{proof}Given $i\in\N$, we consider the $n$-harmonic extension $v_i \in W^{1,p}(B_i;\R^N)$ solution to 
		\begin{align*}
			\left\{ \begin{array}{l}
				\lap_{g,p} v_i = 0\ \ \text{in }B_i,\\
				v_i = u\ \ \ \ \ \text{on }\dr B_i.
			\end{array}
			\right.
		\end{align*}
		Let $i\in\N$. By triangular inequality, we obtain:
		\begin{align*}
			\left( \fint_{B_{i+2}} \abs{ \g u - (\g u)_{B_{i+2}} }^p_\xi\, dy \right)^\frac{1}{p} \leq & \left( \fint_{B_{i+2}} \abs{ \g u - \g v_{i+1} }^p_\xi\, dy \right)^\frac{1}{p} + \left( \fint_{B_{i+2}} \abs{ \g v_{i+1} - (\g v_{i+1})_{B_{i+2}} }^p_\xi\, dy \right)^\frac{1}{p}\\
			& + \abs{ (\g u)_{B_{i+2}} - (\g v_{i+1})_{B_{i+2}} }_\xi \\
			\leq & \ 2\left( \fint_{B_{i+2}} \abs{ \g u - \g v_{i+1} }^p_\xi\, dy \right)^\frac{1}{p} + \left( \fint_{B_{i+2}} \abs{ \g v_i - (\g v_{i+1})_{B_{i+2}} }^p_\xi\, dy \right)^\frac{1}{p}.
		\end{align*}
		We estimate the last term using \Cref{th:reg_pharm_curved_domain} - \Cref{item:osc_pharm_curved}:
		\begin{align*}
			\left( \fint_{B_{i+2}} \abs{ \g u - (\g u)_{B_{i+2}} }^p_\xi\, dy \right)^\frac{1}{p} \leq &\ 2\left( \fint_{B_{i+2}} \abs{ \g u - \g v_{i+1} }^p_\xi\, dy \right)^\frac{1}{p}\\
			& + \left( \kappa\left( 1+[g]_{C^{0,\beta}(\B)}^\frac{1}{p}\right)\|g\|_{L^\infty(\B)}^{2p} \delta^\gamma \fint_{B_{i+1}} \abs{ \g v_{i+1} - (\g v_{i+1})_{B_{i+1}} }^p_\xi\, dy \right)^\frac{1}{p}.
		\end{align*}
		Thanks to the choice of $\delta$ in \eqref{def:choice_delta2}, we obtain:
		\begin{align*}
			\left( \fint_{B_{i+2}} \abs{ \g u - (\g u)_{B_{i+2}} }^n \right)^\frac{1}{p} & \leq 2\left( \fint_{B_{i+2}} \abs{ \g u - \g v_{i+1} }^p_\xi\, dy \right)^\frac{1}{p} + \frac{1}{10}\left( \fint_{B_{i+1}} \abs{ \g v_{i+1} - (\g v_{i+1})_{B_{i+1}} }^p_\xi\, dy \right)^\frac{1}{p}.
		\end{align*}
		Thanks to the relation $|B_{i+2}| = \delta^n |B_{i+1}|$ provided by \eqref{def:choice_ri2}-\eqref{def:choice_Bi2}, we deduce the following:
		\begin{align*}
			\left( \fint_{B_{i+2}} \abs{ \g u - (\g u)_{B_{i+2}} }^p_\xi\, dy \right)^\frac{1}{p} \leq &\ 2\left( \fint_{B_{i+2}} \abs{ \g u - \g v_{i+1} }^p_\xi\, dy \right)^\frac{1}{p} \\
			& + \frac{1}{10} \left[ \left( \fint_{B_{i+1}} \abs{ \g u - (\g u)_{B_{i+1}} }^p_\xi\, dy \right)^\frac{1}{p} \right.\\
			&\left. + \abs{ (\g u)_{B_{i+1}} - (\g v_{i+1})_{B_{i+1}} }_\xi + \left( \fint_{B_{i+1}} \abs{ \g u - \g v_{i+1} }^p_\xi\, dy \right)^\frac{1}{p} \right] \\
			\leq &\ \left( \frac{2}{\delta^{\frac{n}{p}}} + \frac{1}{5} \right) \left( \fint_{B_{i+1}} \abs{ \g u - \g v_{i+1} }^p_\xi\, dy \right)^\frac{1}{p} + \frac{1}{10} \left( \fint_{B_{i+1}} \abs{ \g u - (\g u)_{B_{i+1}} }^p_\xi\, dy \right)^\frac{1}{p}.
		\end{align*}
		By Lemma \ref{lm:comparison_step1}, we deduce that:
		\begin{align*}
			\left( \fint_{B_{i+2}} \abs{ \g u - (\g u)_{B_{i+2}} }^p_\xi\, dy \right)^\frac{1}{p} & \leq c_2 [u]_{C^{0,\beta_2}(B_1)}^\frac{1}{p}\delta^\frac{i\beta_2}{p} \left( \fint_{B_{i+1}} |\g u|^n_\xi\, dy \right)^\frac{1}{n} + \frac{1}{10} \left( \fint_{B_{i+1}} \abs{ \g u - (\g u)_{B_{i+1}} }^p_\xi\, dy \right)^\frac{1}{p}. 
		\end{align*}
	\end{proof}
	
	We now prove the $W^{1,\infty}$-regularity in a similar way as \Cref{pr:Holder_involves_Lipschitz}.
	
	\begin{proposition}\label{pr:Lipschitz_complicated}
		Let $g$ be a $C^{0,\beta_1}$-metric on $\B$ satisfying \eqref{hyp:g_bounded} and let $\alpha\in(0,1)$. There exists $\ve_1\in(0,1)$ and $C>0$ depending only on $n,N,\beta_2,\Gamma,\lambda,\Lambda,\|g\|_{C^{0,\beta_1}(\B)},p$ such that the following holds. If $u\in W^{1,p}\cap C^{0,\beta_2}(\overline{\B};\R^N)$ satisfies \eqref{eq:system_complex}-\eqref{hyp:LHS} and $[u]_{C^{0,\beta_2/2}(\B)} \leq \ve_1$, then $\g u \in L^\infty(B(0,\frac{1}{2}))$ with the estimate 
		\begin{align*}
			\|\g u\|_{L^\infty\left(B\left( 0, \frac{1}{2} \right) \right)} \leq C\|\g u\|_{L^p(\B)}.
		\end{align*}
	\end{proposition}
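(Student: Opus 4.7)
The proof would mirror that of \Cref{pr:Holder_involves_Lipschitz}, with \Cref{lm:comparison_step2} playing the role of \Cref{lm:sequence_comparison}. Fix a Lebesgue point $x_0$ of $|\g u|_\xi$ in $B(0,\tfrac{1}{2})$ and a radius $r \in (\tfrac{1}{4}, \tfrac{1}{2})$ with $B(x_0,r) \subset \B$. With $\delta$, $B_i = B(x_0, \delta^i r)$, and $c_2$ as in \Cref{lm:comparison_step2} (applied at the base point $x_0$), introduce
\[
a_i := \left( \fint_{B_i} \abs{ \g u - (\g u)_{B_i} }^n_\xi \right)^{\frac{1}{n}}, \qquad M := \frac{10^{1000}}{\delta}\left[ \left( \fint_{B_0} |\g u|^n_\xi \right)^{\frac{1}{n}} + \left( \fint_{B_1} |\g u|^n_\xi \right)^{\frac{1}{n}} \right].
\]
The goal is to prove by induction on $i$ that $\left( \fint_{B_i} |\g u|^n_\xi \right)^{1/n} + a_i \leq M$ for every $i\in\N$; since $x_0$ is a Lebesgue point, letting $i\to\infty$ then yields $|\g u(x_0)|_\xi \leq M$, and the claimed estimate follows from the definition of $M$ combined with Hölder's inequality.

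The smallness threshold $\ve_1$ is chosen analogously to \eqref{def:choice_eps1} so that $c_2\, \ve_1^{1/n}$ multiplied by the geometric sum $\delta^{\alpha/(2n)}/(1-\delta^{\alpha/(2n)})$ is below $\delta/100$. A point worth stressing is the mismatch between the assumption $[u]_{C^{0,\alpha/2}(B_1)} \leq \ve_1$ and the $C^{0,\alpha}$-seminorm appearing in \Cref{lm:comparison_step1}: inspection of that proof shows that only $\osc_{B_i} u$ actually enters, and the crude bound $\osc_{B_i} u \leq [u]_{C^{0,\alpha/2}(B_1)}\operatorname{diam}(B_i)^{\alpha/2} \leq \ve_1 (2\delta^i r)^{\alpha/2}$ supplies simultaneously the smallness (from $\ve_1$) and the geometric decay (from $\delta^{i\alpha/(2n)}$). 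Thus \Cref{lm:comparison_step2} can be re-read with $[u]_{C^{0,\alpha}(\B)}^{1/n} \delta^{i\alpha/n}$ replaced by $\ve_1^{1/n} \delta^{i\alpha/(2n)}$ throughout.

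The base cases $i \in \{0,1\}$ are immediate from the definition of $M$, yielding in fact the sharper bound $a_i + \left(\fint_{B_i} |\g u|^n_\xi\right)^{1/n} \leq \tfrac{\delta}{10} M$. For the inductive step, assuming the estimate holds up to index $j+1$, the revised form of \Cref{lm:comparison_step2} gives
\[
a_{i+2} \leq \frac{a_{i+1}}{10^{1000}} + c_2\, \ve_1^{\frac{1}{n}}\, \delta^{\frac{i\alpha}{2n}}\, M \qquad \text{for every } i \leq j+1.
\]
Summing over $i$ and using the choice of $\ve_1$ produces $\sum_{i=0}^{j+1} a_{i+2} \leq \tfrac{\delta}{50}M$, hence $a_{j+2} \leq M/2$. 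The companion bound on $\left(\fint_{B_{j+2}} |\g u|^n_\xi\right)^{1/n}$ follows from the triangular inequality together with the telescopic identity $(\g u)_{B_{j+2}} = (\g u)_{B_0} + \sum_{i=0}^{j+1} \bigl((\g u)_{B_{i+1}} - (\g u)_{B_i}\bigr)$ and the standard estimate $|(\g u)_{B_{i+1}} - (\g u)_{B_i}|_\xi \leq \delta^{-1} a_i$, closing the induction.

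The main obstacle is, as in \Cref{pr:Holder_involves_Lipschitz}, the careful bookkeeping of the geometric constants $\delta$, $c_2$ and $\ve_1$ to ensure the iteration closes; the only novelty here is the oscillation-based recasting of \Cref{lm:comparison_step1} that absorbs the $C^{0,\alpha}$ versus $C^{0,\alpha/2}$ discrepancy and simultaneously produces the geometric factor needed to sum the series.
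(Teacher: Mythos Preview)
Your proposal is correct and follows essentially the same induction scheme as the paper's own proof, with the same choices of $a_i$, $M$, $\ve_1$ and the same telescoping argument to close the induction. Your explicit handling of the $C^{0,\alpha}$ versus $C^{0,\alpha/2}$ discrepancy via the oscillation bound is in fact more careful than the paper, which silently replaces $[u]_{C^{0,\alpha}(\B)}$ by $\ve_1$ without commenting on the exponent mismatch.
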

	
	\begin{remark}
		If $u\in C^{0,\beta_2}(\B)$, then for any ball $B(x,r)\subset \B$, it holds
		\begin{align*}
			[u]_{C^{0,\beta_2/2}(B(x,\rho))} \leq (2 \rho)^\frac{\beta_2}{2}[u]_{C^{0,\beta_2}(\B)}.
		\end{align*}
		Thus, the semi-norm $[u]_{C^{0,\beta_2/2}(\B)}$ can be assumed as small as needed if we restrict the domain.
	\end{remark}
	
	\begin{proof}Consider $x\in B\left( 0, \frac{1}{2}\right)$ a Lebesgue point of $|\g u|_\xi $, \textit{i.e.} the point $x$ satisfies 
		\begin{align*}
			\fint_{B(x,r)} |\g u|_\xi \xrightarrow[r\to 0]{}{|\g u(x)|_\xi}.
		\end{align*}
		Given the balls $B_i$ defined in \eqref{def:choice_Bi2} and $\delta\in(0,1)$ defined in \eqref{def:choice_delta2}, we define the following quantities:
		\begin{align}
			\forall i\in\N,\ \ \ & a_i := \left( \fint_{B_i} \abs{ \g u - (\g u)_{B_i} }^p_\xi\, dy \right)^\frac{1}{p}, \label{def:choice_ai2}\\
			& M := \frac{10}{\delta}\left[ \left( \fint_{B_0} |\g u|^p_\xi\, dy \right)^\frac{1}{p} + \left( \fint_{B_1} |\g u|^p_\xi\, dy \right)^\frac{1}{p} \right], \label{def:choice_lambda2}\\
			& \ve_1 :=\frac{\delta^p}{10^p} \min\left( \left(  \frac{9}{10 c_2} \frac{ 1-\delta^{\beta_2/p} }{ \delta^{\beta_2/p} }\right)^p, 1 \right) \leq \frac{1}{10^p}. \label{def:choice_eps12}
		\end{align}
		Here, the constants $c_2$ and $\delta$ and the balls $B_i$ are defined in Lemma \ref{lm:comparison_step2}. We prove by induction that for any $i\in\N$, it holds
		\begin{align}\label{eq:induction_hyp2}
			\left( \fint_{B_i} |\g u|^p_\xi\, dy \right)^\frac{1}{p} + a_i \leq M.
		\end{align}
		By definition of $M$ in \eqref{def:choice_lambda2}, the above statement is true for $i\in\{0,1\}$ with a slightly better inequality:
		\begin{align}
			\left( \fint_{B_0} |\g u|^p_\xi\, dy \right)^\frac{1}{p} + a_0 \leq \frac{\delta}{10} M, \label{eq:start_induction_better02}\\
			\left( \fint_{B_1} |\g u|^p_\xi\, dy \right)^\frac{1}{p} + a_1 \leq \frac{\delta}{10} M.\label{eq:start_induction_better12}
		\end{align}
		Let $j\in\N$ and assume that \eqref{eq:induction_hyp2} holds for any $i\in\inter{0}{j+1}$. Given $i\in\inter{0}{j+1}$, we apply Lemma \ref{lm:comparison_step2}:
		\begin{align*}
			a_{i+2} & \leq \frac{a_{i+1}}{10} + c_2[u]_{C^{0,\beta_2}(\B)}^\frac{1}{p} \delta^\frac{i \beta_2}{p} \left( \fint_{B_{i+1}} |\g u|^p_\xi\, dy\right)^\frac{1}{p} \\
			&\leq \frac{a_{i+1}}{10} + c_2 \ve_1^\frac{1}{p} \delta^\frac{i\alpha}{p} \left( \fint_{B_{i+1}} |\g u|^p_\xi\, dy\right)^\frac{1}{p} .
		\end{align*}
		We sum over $i\in\inter{0}{j+1}$, using the assumption \eqref{eq:induction_hyp2} to bound the second term in the above inequality:
		\begin{align*}
			\sum_{i=0}^{j+1} a_{i+2} & \leq  \frac{1}{10} \sum_{i=0}^{j+1} a_{i+1} + c_2 \ve_1^\frac{1}{p} M \sum_{i=1}^{j+1} \delta^\frac{i\alpha}{p} \\
			&\leq \frac{1}{10} \left( a_1 + \sum_{i=0}^{j+1} a_{i+2} \right) + c_2 \ve_1^\frac{1}{p} M \sum_{i=1}^{j+1} \delta^\frac{i\alpha}{p} .
		\end{align*}
		Thus, we obtain
		\begin{align*}
			\sum_{i=0}^{j+1} a_{i+2} & \leq \frac{a_1}{9} +\frac{10 c_2}{9} \ve_1^\frac{1}{p} M \frac{\delta^{\alpha/p}}{1-\delta^{\alpha/p}}.
		\end{align*}
		By the definition of $\ve_1$ in \eqref{def:choice_eps12}, it holds
		\begin{align*}
			\sum_{i=0}^{j+1} a_{i+2} & \leq \frac{a_1}{9} + \frac{\delta M }{10}.
		\end{align*}
		By \eqref{eq:start_induction_better12}, we deduce that
		\begin{align}\label{eq:sum_ai2}
			\sum_{i=0}^{j+1} a_{i+2} & \leq \frac{\delta M}{5}.
		\end{align}
		In particular, we obtain the first part of the induction step:
		\begin{align}\label{eq:induction_aj2}
			a_{j+2} \leq \frac{M}{2}.
		\end{align}
		To bound $\left( \fint_{B_{j+2}} |\g u|^p \right)^\frac{1}{p}$, we use the triangular inequality:
		\begin{align*}
			\left( \fint_{B_{j+2}} |\g u|^p_\xi\, dy \right)^\frac{1}{p} &\leq \left( \fint_{B_{j+2}} \abs{ \g u - (\g u)_{B_{j+1}} }^p_\xi\, dy \right)^\frac{1}{p} + \abs{ (\g u)_{B_{j+1}} }_\xi \\
			&\leq \left( \fint_{B_{j+2}} \abs{ \g u - (\g u)_{B_{j+1}} }^p_\xi\, dy \right)^\frac{1}{p} + \abs{ (\g u)_{B_0} }_\xi + \sum_{i=0}^j \abs{ (\g u)_{B_{i+1}} - (\g u)_{B_i} }_\xi \\
			&\leq \abs{ (\g u)_{B_0} }_\xi + \sum_{i=0}^{j+1} \left( \fint_{B_{i+1}} \abs{ \g u - (\g u)_{B_i} }^p_\xi\, dy \right)^\frac{1}{p} \\
			&\leq \abs{ (\g u)_{B_0} }_\xi + \sum_{i=0}^{j+1} \frac{ a_i }{\delta}.
		\end{align*}
		From \eqref{eq:start_induction_better02} and \eqref{eq:sum_ai2}, we obtain the second part of the induction step:
		\begin{align*}
			\left( \fint_{B_{j+2}} |\g u|^p \right)^\frac{1}{p} &\leq  \left( \frac{\delta}{10} + \frac{1}{5} \right) M \leq \frac{M}{2}.
		\end{align*}
		Combining \eqref{eq:induction_aj} and the above inequality shows \eqref{eq:induction_hyp2} for $i=j+2$. Thus, for any $j\in\N$, it holds
		\begin{align*}
			\fint_{B_j} |\g u|_\xi\, dy \leq M.
		\end{align*}
		By constructions of the balls $B_i$, see \eqref{def:choice_Bi2}, their common center $x$ is a Lebesgue point for $|\g u|_\xi$. Thus, we obtain
		\begin{align*}
			|\g u(x)|_\xi = \lim_{j\to \infty} \fint_{B_j} |\g u|_\xi\, dy \leq M.
		\end{align*}
		Since the above inequality is valid for any Lebesgue point in $B(0,\frac{1}{2})$, we deduce that $\g u \in L^\infty(B(0,\frac{1}{2}))$. The estimates follows from the above inequality and the choice \eqref{def:choice_lambda2}.
	\end{proof}
	
	\subsection{$C^{1,\alpha}$-regularity }
	Now, we show $\g u \in C^{0,\alpha}$. 
	
	\begin{proposition}\label{pr:C1_complicated}
		Let $g$ be a $C^{0,\beta_1}$-metric on $\B$ satisfying \eqref{hyp:g_bounded}. There exists $\gamma\in(0,1)$ and $\kappa>0$ depending on $n,N,\Gamma,\lambda,\Lambda,\|g\|_{C^{0,\beta_1}(\B)}$ such that the following holds. If $u\in W^{1,\infty}(\B;\R^N)$ satisfies \eqref{eq:system_complex}-\eqref{hyp:LHS}, then it holds $\g u \in C^{0,\gamma}(B(0,\frac{1}{2}))$ together with the following estimate: 
		\begin{align*}
			[\g u]_{ C^{0,\gamma}\left(B \left(0,\frac{1}{2} \right) \right) } \leq \kappa \left( 1+ [g]_{C^{0,\beta_1}(\B)}^\frac{1}{n} \right)\|\g u \|_{L^\infty(\B)}.
		\end{align*}
	\end{proposition}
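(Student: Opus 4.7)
The plan is to imitate the Campanato-type iteration employed in \Cref{pr:Lipschitz_involves_Holder}, substituting the comparison with a $p$-harmonic extension for a constant metric by the $n$-harmonic comparison provided in \Cref{lm:comparison_step1}, and exploiting the oscillation decay of \Cref{th:reg_pharm_curved_domain} applied to the comparison map.

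First I would fix $x \in B(0,1/2)$ and a radius $r>0$ (to be taken small), and denote by $v \in W^{1,n}(B(x,r);\R^N)$ the solution to $\lap_{g,n}v=0$ in $B(x,r)$ with $v = u$ on $\dr B(x,r)$. Since $u \in W^{1,\infty}(\B)$, \Cref{lm:comparison_step1} applied with $\alpha=1$ yields
\begin{align*}
    \left( \fint_{B(x,r)} |\g u - \g v|^n_g\, d\vol_g \right)^{\frac{1}{n}} \leq C \bigl( \|\g u\|_{L^\infty(\B)} r \bigr)^{\frac{1}{n}} \left( \fint_{B(x,r)} |\g u|^n_g\, d\vol_g \right)^{\frac{1}{n}}.
\end{align*}
For $\delta \in (0,\sigma_0)$ and $r$ small enough (depending on $\|g\|_{C^{0,\beta}(\B)}$) so that after rescaling $B(x,r)$ to the unit ball the metric satisfies both \eqref{hyp:g_almost_cst} and $[g]_{C^{0,\beta}} \leq \eps_1$, \Cref{item:osc_pharm_curved} of \Cref{th:reg_pharm_curved_domain} applied to $v$ provides the decay
\begin{align*}
    \left( \fint_{B(x,\delta r)} \abs{ \g v - (\g v)_{B(x,\delta r)} }^n_\xi\, dy \right)^{\frac{1}{n}} \leq C \bigl( 1 + [g]_{C^{0,\beta}(\B)}^{\frac{1}{n}} \bigr) \delta^\gamma \left( \fint_{B(x,r)} \abs{ \g v - (\g v)_{B(x,r)} }^n_\xi\, dy \right)^{\frac{1}{n}}.
\end{align*}
Combining the two displays with two triangle inequalities (one to replace $\g v$ by $\g u$ on the right-hand side, another to insert $(\g v)_{B(x,\delta r)}$ on the left) and bounding the comparison term with the Lipschitz estimate produces an iteration inequality of the form
\begin{align*}
    U(x,\delta r) \leq C \bigl(1+[g]_{C^{0,\beta}(\B)}^{\frac{1}{n}}\bigr) \delta^\gamma U(x,r) + C \delta^{-1} \bigl( \|\g u\|_{L^\infty(\B)} r \bigr)^{\frac{1}{n}} \|\g u\|_{L^\infty(\B)},
\end{align*}
where $U(x,r) := \bigl( \fint_{B(x,r)} |\g u - (\g u)_{B(x,r)}|^n_\xi\, dy \bigr)^{\frac{1}{n}}$. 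Choosing $\delta$ so that $C(1+[g]_{C^{0,\beta}}^{\frac{1}{n}}) \delta^\gamma \leq \frac{1}{2}$ and iterating as in \Cref{pr:Lipschitz_involves_Holder} produces a Campanato decay $U(x,r) \leq C r^\alpha$ valid for $r \leq r_0$ and some $\alpha = \alpha(n,N,\Gamma,\lambda,\Lambda,\|g\|_{C^{0,\beta}(\B)}) \in (0,1)$. A standard covering then upgrades this estimate from small balls to $B(0,1/2)$, and the Hölder continuity of $\g u$ follows from \cite[Theorem 3.1]{han2011}.

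The main obstacle is the careful verification of the hypotheses of \Cref{th:reg_pharm_curved_domain} on each ball $B(x,r)$: both the almost-constant condition \eqref{hyp:g_almost_cst} and the smallness $[g]_{C^{0,\beta}} \leq \eps_1$ become available only after restricting to a ball of sufficiently small radius and rescaling to the unit ball, which fixes the threshold $r_0$ in terms of $\|g\|_{C^{0,\beta}(\B)}$. A secondary difficulty is bookkeeping the dependence of all constants on $n,N,\Gamma,\lambda,\Lambda,\|g\|_{C^{0,\beta}(\B)}$, and interpreting the factor $\|\g u\|_{L^\infty(\B)}^{1/n} r^{1/n}$ appearing in the forcing term as the small scaling parameter of the iteration rather than as an extra power of $\|\g u\|_{L^\infty(\B)}$ in the final bound.
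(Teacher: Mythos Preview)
Your proposal is correct and follows essentially the same route as the paper's proof: compare $u$ on each ball with the $n$-harmonic extension $v$ via \Cref{lm:comparison_step1}, transfer the oscillation decay of $\g v$ from \Cref{th:reg_pharm_curved_domain}\,\Cref{item:osc_pharm_curved} back to $\g u$ by triangle inequalities, choose $\delta$ to make the contraction factor $\leq \tfrac12$, iterate, and conclude by Campanato's characterization. Your remarks about the rescaling needed to place the metric in the regime of \Cref{th:reg_pharm_curved_domain} and about the spurious factor $\|\g u\|_{L^\infty}^{1/n}$ in the forcing term are both legitimate bookkeeping points that the paper handles implicitly (the latter can be removed by normalising $\|\g u\|_{L^\infty}=1$ via scaling).
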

	
	\begin{proof}[Proof]
		Consider $B(x,r)\subset \B$ and $v\in W^{1,p}(B(x,r);\R^N)$ the $p$-harmonic extension of $u$: 
		\begin{align*}
			\left\{ \begin{array}{l}
				\lap_{g,p} v = 0\ \ \text{in }B(x,r),\\
				v = u\ \ \ \ \ \text{on }\dr B(x,r).
			\end{array}
			\right.
		\end{align*}
		Let $\sigma_0,\gamma_0$ be as in \Cref{th:reg_pharm_curved_domain}. Let $\delta\in(0,\sigma_0)$. It holds
		\begin{align*}
			\left( \fint_{B(x,\delta r)} \abs{ \g u - (\g u)_{B(x,\delta^2 r)} }^p_\xi\, dy \right)^\frac{1}{p} &\leq \left( \fint_{B(x,\delta r)} \abs{ \g v - (\g v)_{B(x,\delta r)} }^p_\xi\, dy \right)^\frac{1}{p} + 2\left( \fint_{B(x,\delta r)} \abs{ \g u - \g v}^p_\xi\, dy \right)^\frac{1}{p}.
		\end{align*}
		From \Cref{th:reg_pharm_curved_domain} - \Cref{item:osc_pharm_curved}, we estimate the first term:
		\begin{align*}
			\left( \fint_{B(x,\delta r)} \abs{ \g u - (\g u)_{B(x,\delta r)} }^p_\xi\, dy \right)^\frac{1}{p} \leq & \  c_0\left( 1+ [g]_{C^{0,\beta_1}(\B)}^\frac{1}{p} \right) \delta^{\gamma_0} \left( \fint_{B(x,r)} \abs{ \g v - (\g v)_{B(x,r)} }^p_\xi\, dy \right)^\frac{1}{p} \\
			& + 2\left( \fint_{B(x,\delta r)} \abs{ \g u - \g v}^p_\xi\, dy \right)^\frac{1}{p},
		\end{align*}
		where the constant $c_0$ depends on $p,n,N,\lambda,\Lambda,\beta_1$. From \Cref{lm:comparison_step1} and the fact that $u$ is Lipschitz so that we can consider $\beta_2=1$, we estimate the second term: 
		\begin{align*}
			\left( \fint_{B(x,\delta r)} \abs{ \g u - (\g u)_{B(x,\delta r)} }^p_\xi\, dy \right)^\frac{1}{p} &\leq c_0\left( 1+ [g]_{C^{0,\beta_1}(\B)}^\frac{1}{p} \right) \delta^{\gamma_0} \left( \fint_{B(x,r)} \abs{ \g v - (\g v)_{B(x,r)} }^p_\xi\, dy \right)^\frac{1}{p} + c_1\|\g u\|_{L^\infty(\B)}r^\frac{1}{p},
		\end{align*}
		where the constant $c_1$ depends on $n,N,\Gamma,\lambda,\Lambda,p,\beta_1$. By triangular inequality, we obtain:
		\begin{align*}
			\left( \fint_{B(x,\delta r)} \abs{ \g u - (\g u)_{B(x,\delta r)} }^p_\xi\, dy \right)^\frac{1}{p} \leq &\ c_1 \|\g u\|_{L^\infty(\B)} r^\frac{1}{p} + c_0\left( 1+ [g]_{C^{0,\beta_1}(\B)}^\frac{1}{p} \right)\delta^{\gamma_0} \left( \fint_{B(x,r)} \abs{ \g u - (\g u)_{B(x,r)} }^p_\xi\, dy \right)^\frac{1}{p} \\
			& + c_0\left( 1+ [g]_{C^{0,\beta_1}(\B)}^\frac{1}{p} \right)\delta^{\gamma_0} \left( \fint_{B(x,r)} \abs{ \g v - \g u }^p_\xi\, dy \right)^\frac{1}{p}
		\end{align*}
		Using again Lemma \ref{lm:comparison_step1} and the fact that $u$ is Lipschitz continuous, we estimate the last term:
		\begin{align*}
			\left( \fint_{B(x,\delta r)} \abs{ \g u - (\g u)_{B(x,\delta r)} }^p_\xi\, dy \right)^\frac{1}{p} \leq&\  c_1(1+c_0)\left( 1+ [g]_{C^{0,\beta_1}(\B)}^\frac{1}{n} \right)\|\g u \|_{L^\infty(\B)} r^\frac{1}{p} \\
			& + c_0\left( 1+ [g]_{C^{0,\beta_1}(\B)}^\frac{1}{p} \right)\delta^{\gamma_0} \left( \fint_{B(x,r)} \abs{ \g u - (\g u)_{B(x,r)} }^p_\xi\, dy \right)^\frac{1}{p}.
		\end{align*}
		Choosing $\delta = \min\left( (2c_0)^{-\frac{1}{\gamma_0}} \left( 1+ [g]_{C^{0,\beta_1}(\B)}^\frac{1}{p} \right)^{-\frac{1}{\gamma_0}}, \frac{\sigma_0}{2}\right) $, we obtain:
		\begin{align*}
			\left( \fint_{B(x,\delta r)} \abs{ \g u - (\g u)_{B(x,\delta r)} }^p_\xi\, dy \right)^\frac{1}{p} &\leq c_1(1+c_0)\left( 1+ [g]_{C^{0,\beta_1}(\B)}^\frac{1}{p} \right)\|\g u \|_{L^\infty(\B)} r^\frac{1}{p} +\frac{1}{2} \left( \fint_{B(x,r)} \abs{ \g u - (\g u)_{B(x,r)} }^p_\xi\, dy \right)^\frac{1}{p}.
		\end{align*}
		Thus, there exists $\alpha\in(0,1)$ such that 
		\begin{align*}
			\left( \fint_{B(x,r)} \abs{ \g u - (\g u)_{B(x,r)} }^p_\xi\, dy\right)^{\frac{1}{p}} \leq c r^\alpha \left[ \left( 1+ [g]_{C^{0,\beta_1}(\B)}^\frac{1}{p} \right)\|\g u \|_{L^\infty(\B)} + \left( \fint_{\B} \abs{ \g u - (\g u)_{\B} }^p_\xi\, dy \right)^\frac{1}{p} \right],
		\end{align*}
		were $c$ depends only on $p,n,N,\Gamma,\lambda,\Lambda,\beta_1$. We conclude thanks to \cite[Theorem 3.1]{han2011}.
	\end{proof}
	
	\section{Declarations}
	\subsection{Ethical approval}
	Not applicable.
	\subsection{Funding}
	Partial support through ANR BLADE-JC ANR-18-CE40-002 is acknowledged. Partial support through the Swiss National Science Foundation, project SNF 200020\_219429, is acknowledged.
	\subsection{Availability of data and materials}
	Not applicable.
	
	\bibliographystyle{alpha}
	\bibliography{bib.bib}
	
\end{document}